\crefname{section}{Section}{Sections}
\crefname{subsection}{\S}{\S\S}
\crefname{subsubsection}{\S}{\S\S}
\theoremstyle{plain}
\newtheorem{lemma}{Lemma}[section]
\newtheorem{proposition}[lemma]{Proposition}
\newtheorem{corollary}[lemma]{Corollary}
\newtheorem{theorem}[lemma]{Theorem}
\theoremstyle{plain}
\newtheorem{theoremN}{Theorem}
\theoremstyle{plain}
\newtheorem{definition}[lemma]{Definition}
\newtheorem{example}[lemma]{Example}
\newtheorem{remark}[lemma]{Remark}
\newtheorem{remarks}[lemma]{Remarks}
\newtheorem{notation}[lemma]{Notation}
\newtheorem{recollection}[lemma]{Recollection}
\crefname{definition}{definition}{definitions}
\crefname{ex}{example}{examples}
\crefname{exs}{example}{examples}
\crefname{remark}{remark}{remarks}
\crefname{remarks}{remark}{remarks}
\crefname{convention}{convention}{conventions}
\crefname{notation}{notation}{notations}
\crefname{table}{table}{tables}
\crefname{lemma}{lemma}{lemmas}
\crefname{proposition}{proposition}{propositions}
\crefname{corollary}{corollary}{corollaries}
\crefname{theorem}{theorem}{theorems}
\crefname{theoremN}{theorem}{theorems}
\crefname{enumi}{}{}
\crefname{assumption}{assumption}{Assumptions}
\crefname{construction}{construction}{Constructions}
\crefname{recollection}{recollection}{Recollections}
\crefname{equation}{}{}
\numberwithin{equation}{section}
\theoremstyle{nonumberplain}
\newtheorem{proof}{Proof}
\newcommand\bC{{\mathbb C}}
\newcommand\bG{{\mathbb G}}
\newcommand\bH{{\mathbb H}}
\newcommand\bP{{\mathbb P}}
\newcommand\bZ{{\mathbb Z}}
\newcommand\cE{{\mathcal E}}
\newcommand\cF{{\mathcal F}}
\newcommand\cG{{\mathcal G}}
\newcommand\cH{{\mathcal H}}
\newcommand\cK{{\mathcal K}}
\newcommand\cL{{\mathcal L}}
\newcommand\cM{{\mathcal M}}
\newcommand\cN{{\mathcal N}}
\newcommand\cO{{\mathcal O}}
\newcommand\cQ{{\mathcal Q}}
\newcommand\cS{{\mathcal S}}
\newcommand\cT{{\mathcal T}}
\newcommand\cU{{\mathcal U}}
\newcommand\cV{{\mathcal V}}
\newcommand\fF{{\mathfrak F}}
\newcommand\ol{\overline}
\DeclareMathOperator{\id}{id}
\DeclareMathOperator{\im}{im}
\DeclareMathOperator{\spn}{span}
\DeclareMathOperator{\Sec}{\mathrm{Sec}}
\DeclareMathOperator{\End}{\mathrm{End}}
\DeclareMathOperator{\Hom}{\mathrm{Hom}}
\DeclareMathOperator{\Aut}{\mathrm{Aut}}
\DeclareMathOperator{\Pic}{\mathrm{Pic}}
\DeclareMathOperator{\Quot}{\mathrm{Quot}}
\DeclareMathOperator{\Coh}{\mathrm{Coh}}
\def\End{\operatorname {End}}
\def\Ext{\operatorname {Ext}}
\def\rk{\operatorname {rk}}
\def\gr{\operatorname {gr}}
\def\md{\operatorname {mid}}
\newcommand{\cat}[1]{\textsc{#1}}
\newcommand{\qedhere}{\mbox{}\hfill\ensuremath{\blacksquare}}
\renewcommand{\square}{\mathrel{\Box}}
\newcommand{\xrightarrowdbl}[2][]{%
  \xrightarrow[#1]{#2}\mathrel{\mkern-14mu}\rightarrow
}
\title{Bundle-extension inverse problems over elliptic curves}
\author{Alexandru Chirvasitu}
\begin{document}

\date{}

\newcommand{\Addresses}{{
  \bigskip
  \footnotesize

  \textsc{Department of Mathematics, University at Buffalo}
  \par\nopagebreak
  \textsc{Buffalo, NY 14260-2900, USA}  
  \par\nopagebreak
  \textit{E-mail address}: \texttt{achirvas@buffalo.edu}


}}

\maketitle

\begin{abstract}
  We prove a number of results to the general effect that, under obviously necessary numerical and determinant constraints, ``most'' morphisms between fixed bundles on a complex elliptic curve produce (co)kernels which can either be specified beforehand or else meet various rigidity constraints. Examples include: (a) for indecomposable $\mathcal{E}$ and $\mathcal{E'}$ with slopes and ranks increasing strictly in that order the space of monomorphisms whose cokernel is semistable and maximally rigid (i.e. has minimal-dimensional automorphism group) is open dense; (b) for indecomposable $\mathcal{K}$, $\mathcal{E}$ and stable $\mathcal{F}$ with slopes increasing strictly in that order and ranks and determinants satisfying the obvious additivity constraints the space of embeddings $\mathcal{K}\to \mathcal{E}$ whose cokernel is isomorphic to $\mathcal{F}$ is open dense; (c) the obvious mirror images of these results; (d) generalizations weakening indecomposability to semistability + maximal rigidity; (e) various examples illustrating the necessity of the assorted assumptions. 
\end{abstract}

\noindent {\em Key words:
  Chern class;
  Harder-Narasimhan filtration;
  Harder-Narasimhan polygon;
  Jordan-H\"older filtration;
  Quot scheme;
  associated grading;
  bounded derived category;
  degree;
  determinant;
  divisor;
  elliptic curve;
  moduli space;
  pencil;  
  quadric;
  rank;
  secant variety;
  slope;
  spherical object;
  twist functor;
  vector bundle

}

\vspace{.5cm}

\noindent{MSC 2020: 14H52; 14H60; 14D20; 14F06; 18E10; 18G15; 32L10}


\section*{Introduction}

We work over an algebraically closed field $\Bbbk$ of characteristic 0 throughout (the reader may as well assume $\Bbbk=\bC$), with {\it vector bundles} \cite[\S B.3]{Fulton-2nd-ed-98} over elliptic curves, often conflating them with their corresponding locally free sheaves of sections \cite[\S B.3.2]{Fulton-2nd-ed-98}. 

The problems studied below arise naturally in the course of examining the Poisson geometry of bundle-extension spaces
\begin{equation*}
  \bP\Ext^1(\cF,\cO)
  :=
  Ext^1(\cF,\cO)^{\times}/\Bbbk^{\times}
  ,\quad
  \cF\text{ stable}
\end{equation*}
in \cite{FO98} and numerous follow-ups \cite{pl98,HP1,HP2,hp_23,2306.14719v1,2310.05284v1,00-leaves_xv3,ch_sympl-qnk_xv1}, {\it (semi)stability} being understood as usual (e.g. \cite[Definition 10.20]{muk-invmod}), as having no proper non-zero subbundles of $\ge$ (respectively strictly larger) {\it slope} (= degree/rank; cf. \Cref{se:prel} for brief background recollections). 

One comparatively elementary issue, in that context, addressed in \cite[]{00-leaves_xv3} for line bundles and \cite[Theorem 2.4]{ch_sympl-qnk_xv1} for more general stable $\cF$, is that of determining which middle terms occur as middle terms of extensions
\begin{equation*}
  0\to
  \cO
  \lhook\joinrel\xrightarrow{\quad}
  \bullet
  \xrightarrowdbl{\quad}
  \cF
  \to 0
\end{equation*}
classified by appropriately non-degenerate elements of $\Ext^1(\cF,\cO)$. The inverse problems of the present paper's title are of that nature: fitting preselected vector bundles $\cK$, $\cE$ and $\cF$ into exact sequences
\begin{equation}\label{eq:kef}
  0\to
  \cK
  \lhook\joinrel\xrightarrow{\quad}
  \cE
  \xrightarrowdbl{\quad}
  \cF
  \to 0,
\end{equation}
under various sets of sufficient (and, one hopes, not onerous) conditions. In that direction, a slightly simplified \Cref{th:prescribedseq} reads:

\begin{theoremN}\label{thn:3bdls}
  Let $\cK$, $\cE$ and $\cF$ be indecomposable bundles, with ranks and determinants satisfying
  \begin{equation}\label{eq:slopesrksdets}
    \begin{cases}
      \mu(\cK) < \mu(\cF)\\
      \rk \cE = \rk \cK+\rk \cF\\
      \det\cE \cong \det\cF\otimes \det\cK      
    \end{cases}
  \end{equation}  
  \begin{enumerate}[(1),wide]
  \item\label{item:th:prescribedseq:fstab} If $\cF$ is stable then the space of embeddings $\cK\lhook\joinrel\to \cE$ fitting into an exact sequence \Cref{eq:kef} is open dense in $\Hom(\cK,\cE)$.

  \item\label{item:th:prescribedseq:kstab} Dually, if $\cK$ is stable then the space of epimorphisms $\cE\xrightarrowdbl{}\cF$ that can be completed to extensions \Cref{eq:kef} is open dense in $\Hom(\cE,\cF)$. 
  \end{enumerate}
  In particular, if either $\cF$ or $\cK$ is stable then extensions \Cref{eq:kef} exist.  \qedhere
\end{theoremN}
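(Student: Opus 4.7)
My plan is to split the proof into an openness/reduction step and an existence step, the latter being the main obstacle.

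\textbf{Openness and Atiyah.} Define $V\subseteq\Hom(\cK,\cE)$ as the open locus where $f$ has pointwise rank equal to $\rk\cK$: on $V$, $f$ is injective with locally free cokernel of rank $\rk\cF$, degree $\deg\cF$, and determinant $\det\cE\otimes(\det\cK)^{-1}\cong\det\cF$ (by the determinantal hypothesis). Semistability of the cokernel is a further open condition, defining $V_{\mathrm{ss}}\subseteq V$. Since $\cF$ is stable we have $\gcd(\rk\cF,\deg\cF)=1$, so semistability forces stability, and Atiyah's classification on the elliptic curve pins any such stable bundle to a single isomorphism class once the determinant is specified. Hence $\operatorname{coker} f\cong\cF$ for every $f\in V_{\mathrm{ss}}$, and the theorem reduces to showing $V_{\mathrm{ss}}\neq\varnothing$, equivalently, to exhibiting one short exact sequence as in \Cref{eq:kef}.

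\textbf{Existence --- the main obstacle.} This amounts to finding $e\in\Ext^1(\cF,\cK)$ whose middle term $M_e$ is isomorphic to $\cE$. Serre duality (trivial canonical bundle) together with semistability give $\Hom(\cF,\cK)=0$ (between semistables with strictly decreasing slopes), and Riemann-Roch yields
$$\dim\Ext^1(\cF,\cK)=\rk\cK\cdot\deg\cF-\rk\cF\cdot\deg\cK>0.$$
I would study the modular map $\Phi\colon\bP\Ext^1(\cF,\cK)\dashrightarrow\mathcal M$ into the moduli $\mathcal M$ of $S$-equivalence classes of semistable bundles with the numerical invariants of $\cE$, a projective variety of dimension $h_\cE-1$ with $h_\cE=\gcd(\rk\cE,\deg\cE)$. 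A dimension count gives an expected fiber dimension of $\rk\cK\deg\cF-\rk\cF\deg\cK-h_\cE$; this is nonnegative because $h_\cE$ divides both $\rk\cE$ and $\deg\cE$, hence also divides $\rk\cK\deg\cE-\rk\cE\deg\cK=\rk\cK\deg\cF-\rk\cF\deg\cK$. The hard part is upgrading this inequality to an actual nonempty fiber over the arbitrary indecomposable point $[\cE]$; such points form a proper (often finite) closed subset of $\mathcal M$ when $h_\cE>1$ --- for instance, four isolated points in $\mathcal M\cong\bP^1$ when $h_\cE=2$. My plan for this step combines the $\Pic^0$-twisting action on the whole picture, explicit Quot-scheme dimension bounds for competing subsheaves of $M_e$, and a reduction to the coprime base case $h_\cE=1$ (where Atiyah gives immediate uniqueness and mere dominance of $\Phi$ suffices).

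\textbf{Part (2).} Part (2) follows from part (1) by Serre duality $\Ext^1(\cF,\cK)\cong\Hom(\cK,\cF)^{\vee}$, which swaps sub- and quotient-roles. The openness argument transcribes verbatim with $\cK$ --- now assumed stable --- in the Atiyah-uniqueness role that $\cF$ played in part (1), and the non-emptiness clause reduces to the same existence statement as in part (1). Both parts thus share the same existence core, which is where the real content of the proof lies.
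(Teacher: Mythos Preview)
Your openness/reduction step is correct and matches the paper: the locus of bundle-monomorphisms with semistable cokernel is open (the paper records this as \Cref{le:semistabopen}), and Atiyah's uniqueness of stable bundles with given rank and determinant then identifies that cokernel with $\cF$.

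The existence step, however, is where your proposal diverges from the paper and carries a genuine gap. You work in $\Ext^1(\cF,\cK)$ and try to show the map to the moduli space $\mathcal M$ hits the class of the given indecomposable $\cE$. You correctly flag the difficulty: when $h_{\cE}>1$ the indecomposable locus in $\mathcal M$ is a proper closed subset (finitely many points, in fact), and a nonnegative expected fiber dimension does not by itself guarantee a nonempty fiber there. Your proposed tools (torsor action of $\Pic^0$, Quot-scheme bounds, reduction to $h_{\cE}=1$) are not fleshed out, and it is unclear how they would combine to close the gap; the $\Pic^0$-twisting, for instance, permutes the indecomposable classes among themselves but does not by itself relate them to the generic locus of $\mathcal M$.

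The paper avoids this entirely by never leaving $\Hom(\cK,\cE)$. The key is a direct dimension count inside that fixed hom-space: first one shows (\Cref{th:maxnondeg}) that morphisms $\cK\to\cE$ which are \emph{not} bundle-embeddings factor through some intermediate bundle $\cF_0$ of type strictly between, and the space of such factorizations has dimension $<\dim\Hom(\cK,\cE)=\braket{\cK\to\cE}$. Then one shows (\Cref{pr:kcofstab}, extended in \Cref{th:asym}) that among bundle-embeddings, those whose cokernel $\cQ$ has a fixed \emph{unstable} Harder--Narasimhan type contribute a locus of dimension at most $\braket{\cE\to\cQ}+(s-1)-(s+1)=\braket{\cK\to\cE}-2$, again strictly smaller. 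The point is that $\cE$ is held fixed throughout: one never needs to argue about which middle terms arise in $\Ext^1(\cF,\cK)$ or to hit a specific point in a moduli space. Your $\Ext^1$ approach is not wrong in spirit, but it trades a direct count for a surjectivity statement that is harder to establish.

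A minor remark: your deduction of part (2) from part (1) via Serre duality is not quite the right mechanism. The two parts are interchanged by ordinary bundle duality $\bullet\mapsto\bullet^*$, which turns $0\to\cK\to\cE\to\cF\to 0$ into $0\to\cF^*\to\cE^*\to\cK^*\to 0$ and swaps the roles of the extreme terms.
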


\Cref{th:prescribedseq} differs from the simplified statement just given in relaxing indecomposability: while the latter does matter (\hyperref[res:notdense]{Remarks~\ref*{res:notdense}}), the more relevant constraint is symmetry paucity (a theme that will recur): \Cref{thn:3bdls} goes through for what \Cref{res:pr:maxnondeg:post}\Cref{item:res:pr:maxnondeg:post:fixindec} refers to as {\it maximally asymmetric} semistable bundles (or {\it basic}: \Cref{res:pr:maxnondeg:post}\Cref{item:res:pr:maxnondeg:post:basic}): those which
\begin{itemize}[wide]
\item given their ranks and degrees, have minimal-dimensional automorphism group;

\item or equivalently, break up as direct sums of indecomposable bundles with no common stable subquotients. 
\end{itemize}
The equivalence follows from \Cref{eq:selfexts} below (and is also proven in \cite[Lemma 5.6(i)]{2306.14719v1} (where said bundles are termed {\it quasi indecomposable}).

\Cref{thn:3bdls} is suggestive of another, related track: results to the effect that ``most'' morphisms between two given bundles exhibit as little degeneracy as one could possibly expect. Thus (\Cref{th:maxnondeg}):

\begin{theoremN}\label{thn:2bdls}
  For indecomposable bundles $\cE$ and $\cE'$ with slopes $\mu(\cE)<\mu(\cE')$ the following subspace is open dense in $\Hom(\cE,\cE')$ depending on the relative ordering of the ranks:
  \begin{itemize}[wide]
  \item that of epimorphisms $\cE\xrightarrowdbl{}\cE'$ if $\rk\cE>\rk\cE'$;

  \item that of torsion-free-cokernel monomorphisms $\cE\lhook\joinrel\xrightarrow{}\cE'$ if $\rk\cE<\rk\cE'$;

  \item that of monomorphisms $\cE\lhook\joinrel\xrightarrow{}\cE'$ (perhaps with non-vanishing, but automatically torsion cokernel) if $\rk\cE=\rk\cE'$.  \qedhere
  \end{itemize}
\end{theoremN}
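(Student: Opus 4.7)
The approach is to separate openness from non-emptiness: since $\Hom(\cE,\cE')$ is an irreducible variety (it is a vector space), any Zariski-open non-empty subset is automatically dense, so for each of the three cases it suffices to check that the locus in question is both open and inhabited. Openness is the soft part. Forming the tautological morphism $\alpha:p_{1}^{*}\cE\to p_{1}^{*}\cE'$ on $X\times \Hom(\cE,\cE')$ (where $X$ is the elliptic curve and $p_{1}$ the first projection), the pointwise rank $(x,f)\mapsto \rk(f_{x})$ is lower semicontinuous and $X$ is projective, so pushforward to $\Hom(\cE,\cE')$ of the rank-jump locus is closed. Surjectivity of $f$ amounts to $\rk(f_{x})=\rk\cE'$ at every $x$, the subbundle condition to $\rk(f_{x})=\rk\cE$ at every $x$, and --- in the equal-rank case --- injectivity of $f$ as a sheaf map is equivalent to the non-vanishing of the polynomial section $\det f\in H^{0}(\det\cE^{\vee}\otimes\det\cE')$; each is visibly open.

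For non-emptiness in the two strict-inequality cases I would lean on \Cref{thn:3bdls}. In the case $\rk\cE<\rk\cE'$, pick an indecomposable bundle $\cQ$ of rank $\rk\cE'-\rk\cE$ and determinant $\det\cE'\otimes(\det\cE)^{-1}$, chosen stable when its rank and degree are coprime and basic/maximally asymmetric in the more general version alluded to after \Cref{thn:3bdls} otherwise. The mediant inequality promotes $\mu(\cE)<\mu(\cE')$ to $\mu(\cE)<\mu(\cQ)$, so \Cref{thn:3bdls}\Cref{item:th:prescribedseq:fstab} applied to the triple $(\cE,\cE',\cQ)$ produces a subbundle embedding $\cE\hookrightarrow\cE'$ with quotient isomorphic to $\cQ$, and in particular one with locally free cokernel. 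The case $\rk\cE>\rk\cE'$ is entirely dual: choose $\cK$ indecomposable with $\rk\cK=\rk\cE-\rk\cE'$ and the mirror-image determinant, verify $\mu(\cK)<\mu(\cE')$ by the same mediant inequality, and apply \Cref{thn:3bdls}\Cref{item:th:prescribedseq:kstab} to obtain a surjection $\cE\twoheadrightarrow\cE'$.

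The main obstacle is the equal-rank case, where \Cref{thn:3bdls} does not apply because there is no rank-additive auxiliary bundle to play the role of kernel or cokernel. Here I would first treat the stable coprime-$(r,d)$ case by producing one injection $\cE\hookrightarrow\cE'$ directly --- for instance via iterated elementary (Hecke) modification of $\cE'$, peeling off torsion quotients at well-chosen points a total of $\deg\cE'-\deg\cE$ times, and using non-emptiness of the relevant punctual Quot schemes on the elliptic curve to steer the isomorphism class of the resulting subsheaf into the orbit of $\cE$. The general indecomposable equal-rank case then follows from Atiyah's structural theorem: every indecomposable bundle on $X$ is of the form $\cE_{0}\otimes\cF_{h}$ with $\cE_{0}$ stable and $\cF_{h}$ an iterated self-extension of $\cO$, so tensoring the stable-case injection by a common Atiyah bundle (after matching the $h$-parameters appropriately) yields an injection between the original indecomposables. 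Combined with the dimension count $\dim \Hom(\cE,\cE')=\rk\cE\cdot\rk\cE'\cdot(\mu(\cE')-\mu(\cE))>0$ (from $\Ext^{1}(\cE,\cE')\cong\Hom(\cE',\cE)^{*}=0$ by Serre duality and semistability), this exhibits at least one $f$ with $\det f\neq 0$ and closes out density.
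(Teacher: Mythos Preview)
Your openness argument is fine and matches the paper's (\Cref{le:semistabopen}). The substantive issue is the non-emptiness half.

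For the unequal-rank cases you invoke \Cref{thn:3bdls} (and implicitly its strengthening \Cref{th:prescribedseq}). In the paper's logical order that is circular: \Cref{th:prescribedseq} is deduced from \Cref{th:asym}, whose proof rests on \Cref{pr:kcofstab}, which in turn opens by appealing to \Cref{cor:th:maxnondeg:dec}, i.e.\ to \Cref{th:maxnondeg} --- the very statement you are trying to prove. The paper proves \Cref{th:maxnondeg} \emph{directly} by a dimension count: factor a generic $f$ as $\cE\twoheadrightarrow\cF\hookrightarrow\cE'$, split $\cF$ into the ``extremal'' pieces $\cF_{\leftarrow}$, $\cF_{\rightarrow}$ (slopes equal to $\mu(\cE)$, $\mu(\cE')$) and the strictly intermediate part $\cF_{|}$, and show that for $\cF\not\cong\cE'$ the space of such factorizations has dimension strictly below $\braket{\cE\to\cE'}$ via the key inequality \Cref{eq:keyineq}. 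Indecomposability is used precisely to force $\cF_{\leftarrow}$ and $\cF_{\rightarrow}$ to range over finitely many options, preventing them from contributing moduli. None of this machinery is available to you if you try to bootstrap from \Cref{thn:3bdls}.

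The equal-rank case has a separate gap. Your Hecke-modification sketch does not explain why the subsheaf obtained after $\deg\cE'-\deg\cE$ elementary transformations is \emph{stable} (rather than merely of the correct charge), and steering the determinant alone is not enough without that. More seriously, the ``tensor by a common Atiyah bundle'' reduction fails: if $\cE\cong {}_h\cE_0$ and $\cE'\cong {}_{h'}\cE'_0$ with $h=\gcd(\rk\cE,\deg\cE)$ and $h'=\gcd(\rk\cE',\deg\cE')$, equal ranks with distinct slopes can easily force $h\neq h'$ (e.g.\ charges $(6,2)$ and $(6,3)$), so there is no common $F_h$ to tensor by and the stable building blocks have different ranks. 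The paper's direct dimension count handles the equal-rank case uniformly with the others, noting at the end of the proof of \Cref{th:maxnondeg} that the vanishing of \Cref{eq:2nonnegterms} already forces $\rk\cF=\rk\cE=\rk\cE'$.
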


Here too indecomposability does play a role (\hyperref[res:pr:maxnondeg:post]{Remarks~\ref*{res:pr:maxnondeg:post}}), but can be loosened to maximal asymmetry (\Cref{cor:th:maxnondeg:dec}). 

Or, in the same spirit, and coming full circle (for \Cref{th:asym} in fact implies the stronger counterpart \Cref{th:prescribedseq} to \Cref{thn:3bdls} above):

\begin{theoremN}\label{thn:maxss}
  Let $\cE$ and $\cE'$ be unequal-rank maximally asymmetric semistable bundles with slopes $\mu(\cE)<\mu(\cE')$. The space of morphisms $\cE\to \cE'$ that are
  \begin{itemize}[wide]
  \item surjective with semistable maximally asymmetric kernel if $\rk\cE>\rk\cE'$;

  \item or injective with semistable maximally asymmetric cokernel if $\rk\cE<\rk\cE'$
  \end{itemize}
  is open dense in $\Hom(\cE,\cE')$.  \qedhere
\end{theoremN}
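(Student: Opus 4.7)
By duality, we reduce to the surjection case $\rk\cE>\rk\cE'$: applying $(-)^\vee$ interchanges injections and surjections, swaps kernels with cokernels, reverses the slope inequality, and preserves maximal asymmetry. We then aim to show that the locus of surjective $\phi\colon\cE\twoheadrightarrow\cE'$ with semistable maximally asymmetric kernel $\cK$ is open dense in $\Hom(\cE,\cE')$. For any such surjection, $\cK$ has rank $\rk\cE-\rk\cE'>0$ and slope $\mu(\cK)=(\deg\cE-\deg\cE')/(\rk\cE-\rk\cE')$; using $\mu(\cE)<\mu(\cE')$ one checks $\mu(\cE)<\mu(\cK)<\mu(\cE')$, and semistability of $\cK$ is automatic, since any destabilizing $\cK'\subseteq\cK$ would inject into $\cE$ with $\mu(\cK')>\mu(\cK)>\mu(\cE)$, violating semistability of $\cE$.

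Surjectivity is open in $\Hom(\cE,\cE')$ by semicontinuity of the fibre rank of $\phi_x\colon\cE_x\to\cE'_x$. On the surjective locus, $\cK$ varies in a flat family, in which $\dim\Aut(\cK)$ is upper semicontinuous; hence the condition that $\dim\Aut(\cK)$ equal the minimum achievable on semistable bundles of rank $\rk\cE-\rk\cE'$ and degree $\deg\cE-\deg\cE'$, i.e., maximal asymmetry, cuts out an open subset. Since $\Hom(\cE,\cE')$ is an irreducible vector space, density reduces to exhibiting one $\phi$ of the desired form.

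For non-emptiness, decompose $\cE=\bigoplus_i\cE_i$ and $\cE'=\bigoplus_j\cE'_j$ into indecomposable summands; maximal asymmetry on each side, together with $\mu(\cE)\neq\mu(\cE')$, ensures that $\{\cE_i\}\cup\{\cE'_j\}$ has pairwise disjoint stable subquotients. The plan is to build $\phi_0$ piece by piece: for each target $\cE'_j$, combine enough of the $\cE_i$'s via maximal-rank morphisms supplied by \Cref{thn:2bdls} to surject onto $\cE'_j$; the kernel $\cK_0$ then assembles as an iterated extension of indecomposable pieces whose stable subquotients all have slope $\mu(\cK_0)\notin\{\mu(\cE),\mu(\cE')\}$, and one chooses the combinatorial data so that these pieces of $\cK_0$ are pairwise distinct in the moduli of semistable bundles of slope $\mu(\cK_0)$.

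The main obstacle is ensuring that this assembly actually lands in a maximally asymmetric bundle. When the summand ranks fail to line up cleanly a single $\cE_i$ may be forced to contribute to both the image of $\phi_0$ and its kernel, and one must verify that $\cK_0$ does not accidentally acquire repeated stable subquotients among its indecomposable components. I expect to handle this either by induction on the number of indecomposable summands of $\cE$ and $\cE'$ (bootstrapping \Cref{thn:2bdls} from the indecomposable into the maximally asymmetric case), or alternatively by a dimension count on the Quot scheme showing that the map $\Hom_{\mathrm{surj}}(\cE,\cE')\to M(\rk\cE-\rk\cE',\deg\cE-\deg\cE')$ given by $\phi\mapsto[\ker\phi]$ is dominant, so that the non-empty open maximally asymmetric locus of the target moduli is generically hit.
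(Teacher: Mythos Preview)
Your slope computation is wrong, and the error is load-bearing. With $\rk\cE>\rk\cE'$, $\mu(\cE)<\mu(\cE')$ and $\cK=\ker\phi$, one has $\mu(\cK)<\mu(\cE)<\mu(\cE')$, not $\mu(\cE)<\mu(\cK)$: the inequality $(\deg\cE-\deg\cE')\rk\cE<\deg\cE(\rk\cE-\rk\cE')$ is exactly $\mu(\cE)<\mu(\cE')$. So a destabilizing subsheaf $\cK'\subset\cK$ with $\mu(\cK')>\mu(\cK)$ may still satisfy $\mu(\cK')\le\mu(\cE)$ and hence does \emph{not} contradict semistability of $\cE$. Semistability of the kernel is not automatic; it is precisely the content that requires work. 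In the paper this is first established by a dimension count when the small-rank bundle is stable (\Cref{pr:kcofstab}): one shows that the locus of surjections with kernel of any fixed \emph{unstable} HN type has dimension strictly smaller than $\dim\Hom(\cE,\cE')$.

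Your non-emptiness sketch also does not close. The paper's argument is an induction that you gesture at but do not carry out: first the stable small-rank case via the dimension count just mentioned; then the \emph{indecomposable} small-rank case by writing $\cE$ (in the injective picture) as an iterated self-extension $0\to\tensor*[_{h-1}]{\cK}{}\to\cE\to\cK\to 0$ of a stable $\cK$, using surjectivity of the restriction $\Hom(\cE,\cE')\to\Hom(\cK,\cE')$ and the crucial fact $\dim\Ext^1(\cK,\tensor*[_{h-1}]{\cK}{})=1$ to recognize the pulled-back extension as $\cE$ again; and only then the passage to several indecomposable summands, which is easy because maximal asymmetry forces $\Ext^1(\cE_i,\cE_j)=0$ for $i\neq j$, so any embedding of $\cE_2$ into the quotient $\cE'/\iota_1(\cE_1)$ lifts to $\cE=\cE_1\oplus\cE_2$ for free. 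Your alternative of showing the kernel map to the moduli space is dominant would also work in principle, but you would still need the semistability of the generic kernel first, which brings you back to the dimension count you skipped.
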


\subsection*{Acknowledgements}

The material has benefited from valuable input from M. Aprodu,  R. Kanda, M. Fulger, S. P. Smith and V. Srinivas. This work is partially supported by NSF grant DMS-2001128. 

\section{Preliminaries}\label{se:prel}

The {\it projectivization} operator $\bP$ applies to vector spaces $V$ (producing the space $\bP V$ of lines therein), to bundles $\cE$ as in \cite[\S B.5.5]{Fulton-2nd-ed-98}, producing the bundle $\bP \cE$ whose fiber at the point $x$ of the base is the projectivized space $\bP \cE_x$, and so on. 


\begin{notation}\label{not:miscinit}
  Frequently-used symbols and notions include ($\cE$ being a coherent sheaf on $E$):
  \begin{itemize}[wide]
  \item the {\it rank} $\rk \cE\in \bZ_{\ge 0}$ (\cite[Exercise II.6.10]{hrt}, \cite[Definition 4.4]{BB-vb});

  \item the {\it degree} $\deg\cE$ \cite[Exercise II.6.12]{hrt};

  \item the {\it Euler characteristic} \cite[Exercise III.5.1]{hrt}
    \begin{equation*}
      \chi(\cE):=\dim H^0(\cE) - \dim H^1(\cE);
    \end{equation*}
    
  \item the {\it slope}
    \begin{equation*}
      \begin{aligned}
        \mu(\cE)
        &:=\frac{\chi(\cE)}{\rk \cE}
          \quad\text{following \cite[Definition 4.6]{BB-vb}}\\
        &=\frac{\deg\cE}{\rk\cE}
          \quad\text{on elliptic curves},
      \end{aligned}
    \end{equation*}
    the last equality following from {\it Riemann-Roch} (\cite[Theorem 14.1]{3264}, \cite[Theorem 4.13]{BB-vb}, etc.);

  \item the {\it charge} \cite[post Example 4.16]{BB-vb}
    \begin{equation*}
      \zeta(\cE) = (\rk \cE,\ \deg\cE)\in \bZ_{\ge 0}\times \bZ;
    \end{equation*}

  \item the {\it determinant} (\cite[Exercise II.6.11]{hrt}, generalized)
    \begin{equation*}
      \det\cE:=
      \bigotimes_{i=0}^s
      \left(\bigwedge^{r_i}\cE_i\right)^{(-1)^{i}}
    \end{equation*}
    for a resolution
    \begin{equation}\label{eq:resole}
      0\to
      \cE_s
      \xrightarrow{\quad}
      \cE_{s-1}
      \xrightarrow{\quad}
      \cdots
      \xrightarrow{\quad}
      \cE_1
      \xrightarrow{\quad}
      \cE_0
      \xrightarrow{\quad}
      \cE
      \to 0
    \end{equation}
    of the coherent sheaf $\cE$ on a smooth projective variety by bundles $\cE_i$ of respective ranks $r_i$. This is also the {\it first Chern class} \cite[\S\S 5.2, 5.3]{3264} $c_1(\cE)$ of said coherent sheaf, regarding $c_1$ as a line bundle rather than a divisor \cite[Proposition 1.30]{3264}. 
    
    More generally, \cite[\S 14.2.1]{3264} defines (resolution-independent) higher Chern classes $c_i(\cE)$ in similar fashion in the same context (coherent sheaves on smooth projective varieties) via a resolution \Cref{eq:resole}.

  \item the {\it Abel-Jacobi map} (e.g. \cite[Chapter I, equation (3.2)]{acgh1}, in one version)
    \begin{equation}\label{eq:aj}
      \left(\text{divisor }\sum_i n_i(z_i)_i\right)
      \xmapsto{\quad\sigma\quad}
      \sum_i n_iz_i
      \in E;
    \end{equation}
    we apply the notation to line bundles as well: $\sigma(\cL(D)):=\sigma(D)$. 
    
  \item For a degree-$d$ divisor $D$ set
    \begin{equation}\label{eq:ldd}
      \tensor[^{d;\sigma(D)}]{\cL}{}
      \quad
      \left(\text{or only $\tensor[^{\sigma(D)}]{\cL}{}$ if the degree is understood}\right)
      \quad
      :=
      \quad
      \cO(D).
    \end{equation}

  \item The {\it associated graded bundle} $\gr \cE$ is the direct sum of the subquotients of a {\it Jordan-H\"older filtration} (\cite[Proposition 5.3.7 and surrounding discussion]{lepot-vb}, \cite[\S 1.5]{HL10}): a flag
    \begin{equation*}
      0=\cE_0<\cE_1<\cdots<\cE_s = \cE,
    \end{equation*}
     maximally fine under the constraint that the subquotients $\cE_i/\cE_{i-1}$ be stable with slopes non-increasing in $i$ (in particular, the slopes are all equal to $\mu(\cE)$ when $\cE$ is semistable).

  \item We write
    \begin{equation*}
      \begin{aligned}
        E^{[r]}        
        &:=E^r/\text{(action of the symmetric group $S_r$)}\\
        &=\left\{\text{unordered $r$-tuples in }E\right\}
      \end{aligned}  
    \end{equation*}
    for the $r^{th}$ {\it symmetric power} of $E$ (\cite[p.18]{acgh1}, \cite[\S 1(1)]{CaCi93}, etc.).
  \end{itemize}
\end{notation}

One observation used extensively in the sequel, noted as \cite[Summary post Corollary 4.25, last bullet point]{BB-vb}, is that for indecomposable bundles $\cE$ and $\cF$ on an elliptic curve we have
\begin{equation}\label{eq:degrk}
  \begin{aligned}
    \mu(\cE)<\mu(\cF)
    &\xRightarrow{\quad}
      \begin{cases}
        \dim\Hom(\cE,\cF) &= \deg\cF\cdot \rk\cE-\deg\cE\cdot \rk \cF\\
        \Ext^1(\cE,\cF) &=\{0\}
      \end{cases}\\
    \mu(\cE)>\mu(\cF)
    &\xRightarrow{\quad}
      \begin{cases}
        \Hom(\cE,\cF) &= \{0\}\\
        \dim\Ext^1(\cE,\cF) &= \deg\cF\cdot \rk\cE-\deg\cE\cdot \rk \cF.
      \end{cases}
  \end{aligned}
\end{equation}
More generally, by the additivity of both degrees \cite[post Definition 10.7]{muk-invmod} and ranks under direct sums, \Cref{eq:degrk} holds provided the requisite slope inequalities hold between every summand of $\cE$ in the {\it Harder-Narasimhan (HN for short) decomposition} or {\it filtration} \cite[Proposition 5.4.2]{lepot-vb}: generally the unique flag
\begin{equation}\label{eq:hnfilt}
  \begin{aligned}
    &0<\cE_1<\cdots<\cE_s=\cE
      ,\quad
      \overline{\cE}_i:=\cE_{i}/\cE_{i-1}\text{ semistable}\\
    &\mu_{max}(\cE)
      :=
      \mu\left(\overline{\cE}_1\right)
      \ 
      >
      \ 
      \mu\left(\overline{\cE}_2\right)
      \ 
      >
      \ 
      \cdots
      \ 
      >
      \
      \mu\left(\overline{\cE}_s\right)
      =:
      \mu_{min}(\cE)
  \end{aligned}  
\end{equation}
a filtration generally, a direct sum over elliptic curves.

\begin{remark}\label{re:genassocgr}
  The associated graded object $\gr \cE$ is usually considered for semistable $\cE$ (e.g \cite[Proposition 1.5.2]{HL10}). Given the {\it canonical} nature of the HN filtration though, it makes sense to extend the concept to arbitrary $\cE$:
  \begin{equation*}
    \gr\cE
    :=
    \bigoplus_i \gr\overline{\cE}_i
  \end{equation*}
  for the subquotients $\overline{\cE}_i$ of \Cref{eq:hnfilt}.   
\end{remark}

As a particular case of \Cref{eq:degrk} (upon setting $\cF:=\cO$), we remind the reader \cite[Lemma 17]{tu} that for semistable $\cE$
\begin{equation}\label{eq:tu_lemma-17}
  \begin{aligned}
    \deg\cE<0
    &\xRightarrow{\quad}
      \Gamma(\cE)=\{0\}\text{ and }\dim H^1(\cE)=\deg\cE\\
    \deg\cE>0
    &\xRightarrow{\quad}
      \dim\Gamma(\cE)=\deg\cE\text{ and }H^1(\cE)=\{0\}.
  \end{aligned}  
\end{equation}

It will be profitable to have available some shorthand notation for the rightmost expressions in \Cref{eq:degrk}:
\begin{equation}\label{eq:brake2f}
  \braket{\cE\to \cF}
  :=
  \deg\cF\cdot \rk\cE-\deg\cE\cdot \rk \cF.
\end{equation}

We record also the following estimate on hom-space sizes; it follows easily from \Cref{eq:degrk} and the additivity of $\braket{\bullet\to \bullet}$ in each variable with respect to short exact sequences. In other words: $\braket{\bullet\to\bullet}$ descends to a bilinear map
\begin{equation*}
  K(E)^2
  \xrightarrow{\quad\braket{\bullet\to\bullet}\quad}
  \bZ
\end{equation*}
on the {\it Grothendieck group} $K(E)$ of $E$ (\cite[Definition 1.3]{man_k}, \cite[Exercises II.6.10 and III.6.9 and Appendix A]{hrt}, etc.).

\begin{lemma}\label{le:smallhom}
  For bundles $\cE$ and $\cF$ on $E$
  \begin{equation*}
    \gr(\cE)\perp \gr(\cF)
    \quad
    \xRightarrow{\quad}
    \quad
    \dim\Hom(\cE,\cF)
    =
    \dim\Ext^1(\cF,\cE)
    \le \braket{\cE\to \cF}
  \end{equation*}
  (`$\perp$' meaning no common summands).  \qedhere
\end{lemma}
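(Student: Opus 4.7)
The plan is to reduce the stated inequality, via Serre duality and Riemann--Roch, to a single vanishing $\Ext^1(\cE,\cF) = 0$, and then to establish that vanishing summand-by-summand along the HN splitting using \Cref{eq:degrk} together with a short argument at equal slopes.

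The equality $\dim\Hom(\cE,\cF) = \dim\Ext^1(\cF,\cE)$ is immediate from Serre duality, since $K_E \cong \cO$ on an elliptic curve gives $\Ext^1(\cF,\cE) \cong \Hom(\cE,\cF)^{\vee}$. For the inequality, Riemann--Roch reads
\[
\dim\Hom(\cE,\cF) - \dim\Ext^1(\cE,\cF) = \chi(\cE,\cF) = \braket{\cE\to\cF},
\]
so it suffices to establish $\Ext^1(\cE,\cF) = 0$.

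To get the vanishing, I use that the HN filtration splits on an elliptic curve and write $\cE \cong \bigoplus_i A_i$, $\cF \cong \bigoplus_j B_j$ with each $A_i$, $B_j$ semistable indecomposable; the noted bilinearity of $\braket{\cdot\to\cdot}$, $\dim\Hom$, and $\dim\Ext^1$ reduces $\Ext^1(\cE,\cF) = 0$ to the pairwise statements $\Ext^1(A_i,B_j) = 0$. The cross-slope case $\mu(A_i) < \mu(B_j)$ is handled directly by \Cref{eq:degrk}. The equal-slope case $\mu(A_i) = \mu(B_j)$ is where the hypothesis $\gr(\cE)\perp\gr(\cF)$ does real work: each of $A_i$, $B_j$ has a JH filtration with a single stable constituent up to isomorphism, and by hypothesis those two stables are non-isomorphic. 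A short induction on JH length --- base case the classical fact that non-isomorphic stable sheaves of equal slope have zero $\Hom$, inductive step obtained by peeling off a stable sub (or quotient) and applying the $\Hom$ long exact sequence --- gives $\Hom(A_i,B_j) = \Hom(B_j,A_i) = 0$, and Serre duality converts the latter into $\Ext^1(A_i,B_j) = 0$.

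The main obstacle is this equal-slope JH-length induction: propagating disjointness of the stable constituents up the JH filtration into a $\Hom$-vanishing between the strictly semistable indecomposables at issue. Everything else --- Serre duality, Riemann--Roch, and the bilinear bookkeeping of $\braket{\cdot\to\cdot}$, $\dim\Hom$, $\dim\Ext^1$ across direct sums --- is formal.
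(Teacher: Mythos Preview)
Your approach matches the paper's own one-line hint (reduce via Serre duality and Riemann--Roch to $\Ext^1(\cE,\cF)=0$, then check this summand-by-summand using \Cref{eq:degrk} and the equal-slope analysis of \Cref{re:selfext}). The equal-slope argument you give is exactly the content of \Cref{eq:selfexts}, so no induction is really needed there.

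However, there is a genuine gap: you only handle the cases $\mu(A_i)<\mu(B_j)$ and $\mu(A_i)=\mu(B_j)$, and say nothing about $\mu(A_i)>\mu(B_j)$. In that last case \Cref{eq:degrk} gives $\dim\Ext^1(A_i,B_j)=-\braket{A_i\to B_j}>0$, so your claimed vanishing $\Ext^1(\cE,\cF)=0$ fails. This is not merely a gap in your write-up --- the lemma as stated is actually false without a slope hypothesis: take $\cE=\cO(p)$ and $\cF=\cO$, which satisfy $\gr(\cE)\perp\gr(\cF)$, yet $\dim\Hom(\cE,\cF)=0$ while $\braket{\cE\to\cF}=-1$. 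The paper evidently intends the lemma under the implicit standing assumption (present wherever the lemma is invoked) that every HN-slope of $\cE$ is at most every HN-slope of $\cF$; under that hypothesis your argument is complete and correct, and indeed yields equality $\dim\Hom(\cE,\cF)=\braket{\cE\to\cF}$ rather than merely the inequality. You should make this hypothesis explicit.
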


\begin{remark}\label{re:selfext}
  Contrast \Cref{eq:degrk} with the case of {\it equal}-slope (semistable) bundles.
  
  It follows from \cite[Theorem 10 and Lemma 24]{Atiyah} (or \cite[Propositions 14 and 21]{tu}) that an indecomposable $\cE$ of charge $(r,d)$ is an $h$-fold iterated self-extension $\tensor*[_h]{\cE}{^'}$ of a unique (up to isomorphism) stable charge-$(r',d')$ bundle $\cE'$ with
  \begin{equation*}
    h:=\gcd(r,d)
    ,\quad
    (r,d)
    =
    (hr',hd')
    \quad\text{so that $\gcd(r',d')$=1}.
  \end{equation*}
  As the notation $\tensor*[_h]{\cE}{^'}$ suggests (involving $h$ and $\cE'$ only), $\cE'$ also determines $\cE$ uniquely: there is only one such indecomposable self-extension for every $h$ and stable $\cE'$. Returning to \Cref{eq:degrk}, its (or rather one) analogue for stable equal-slope $\cE$ and $\cF$ is
  \begin{equation}\label{eq:selfexts}
    \begin{aligned}
      \forall h,\ell\in \bZ_{>0}
      ,\quad
      \dim\Hom(\tensor*[_h]{\cE}{},\ \tensor*[_{\ell}]{\cF}{})
      &=
        \dim\Ext^1(\tensor*[_h]{\cE}{},\ \tensor*[_{\ell}]{\cF}{})\\
      &=
        \delta_{\cE,\cF}\cdot\min(h,\ell)
        :=
        \begin{cases}
          \min(h,\ell)&\text{if }\cE\cong \cF\\
          0&\text{ otherwise}.
        \end{cases}
    \end{aligned}    
  \end{equation}
  The $\Hom$ and $\Ext$ versions are mutually equivalent by {\it Serre duality} \cite[Theorem 3.12]{Huy-FM}:
  \begin{equation*}
    \Ext^1(\square,\bullet)    
    \cong
    \Hom(\bullet,\square)^*
  \end{equation*}  
  For $\Hom$ and $\cE=\cF=\cO$ this is recorded explicitly as \cite[Lemma 17(i)]{Atiyah}, hence the general version by
  \begin{equation*}
    \Hom(\tensor*[_h]{\cE}{},\ \tensor*[_{\ell}]{\cF}{})
    \cong
    \Hom(\tensor*[_h]{\cO}{}\otimes \cE,\ \tensor*[_{\ell}]{\cO}{}\otimes \cF)
    \cong
    \Hom(\tensor*[_h]{\cO}{},\ \tensor*[_{\ell}]{\cO}{}\otimes \cF\otimes \cE^*)
  \end{equation*}
  and the decomposition \cite[Lemma 22 and Theorem 10]{Atiyah} of $\cF\otimes \cE^*$ as a direct sum of $\rk\cE\cdot \rk\cF=(\rk\cE)^2$ mutually non-isomorphic degree-0 line bundles, including $\cO$ among them precisely when $\cE\cong \cF$. 
\end{remark}

The following invariant refines the charge (the terminology follows \cite[\S 3]{shatz_dec} and \cite[\S 7]{ab_ym}). 

\begin{definition}\label{def:type}
  The {\it (HN) type} $\tau(\cE)$ of a bundle $\cE$ is the tuple
  \begin{equation*}
    \tau(\cE):=((\rk\cE_i,\ \deg\cE_i))_{i=1}^s
    ,\quad
    0<\cE_1<\cdots<\cE_s=\cE
    \text{ the HN filtration of }\cE.
  \end{equation*}

  The {\it HN polygon} $HNP(\cE)$ is the polygonal line connecting the origin $(0,0)$ in the plane and the {\it vertices} (i.e. members) of $\tau(\cE)$ consecutively in increasing $x$-coordinate order.

  HNPs (or types) are ordered as in \cite[Theorem 3]{shatz_dec} (or more explicitly, \cite[(7.6)]{ab_ym}): $HNP(\cE)\le HNP(\cE')$ (and similarly for the respective types) when $HNP(\cE)$ is placed under $HNP(\cE')$ in the plane. 
\end{definition}


\section{Bundle extensions with prescribed terms}\label{se:prescr-term}

\subsection{Generic extensions of asymmetric bundles}\label{subse:prescr-term}

\cite[Lemma 2.5]{ch_sympl-qnk_xv1}, to the effect that there are always non-zero morphisms from lower to higher-slope bundles, can be improved upon when those bundles are indecomposable.

\begin{definition}\label{def:maxnondeg}
  A morphism $\cE\to \cE'$ of vector bundles is {\it maximally non-degenerate} if it is
  \begin{itemize}[wide]
  \item epic if $\rk\cE\ge \rk \cE'$;
  \item and monic with torsion-free quotient $\cE'/\cE$ (so that that quotient is itself a vector bundle \cite[Lemma 5.2.1]{lepot-vb} when working over a smooth curve, as we are here) if $\rk\cE\le \rk \cE'$. 
  \end{itemize}
  We write
  \begin{equation*}
    \Hom_{max}(\cE,\cE')
    :=
    \left\{\cE\xrightarrow{f}\cE'\ |\ f\text{ maximally non-degenerate}\right\}. 
  \end{equation*}
  and also $\Hom_{\lhook\joinrel\to}(\cE,\cE')$ and $\Hom_{\twoheadrightarrow}(\cE,\cE')$ for the spaces of monomorphisms (possibly with torsion cokernel) and epimorphisms respectively. 
\end{definition}

For vector bundles (even more generally, coherent sheaves) $\cE$ and $\cE'$ on $E$ the $\Bbbk$-linear space
\begin{equation}\label{eq:bh}
  \bH
  :=
  \tensor[_{\cE\to\cE'}]{\bH}{}
  :=
  \Hom(\cE,\cE'),      
\end{equation}
regarded as an affine $\Bbbk$-scheme, represents the functor
\begin{equation*}
  \left(\text{$\Bbbk$-schemes }T\right)
  \xmapsto[]{\quad\text{{\it fpqc} sheaf \cite[\S 2.3.2]{vist_grothtop_2008-10-02}}\quad}
  \left(\text{morphisms }\cE_T\to \cE'_T\right)
  \in \cat{Set},
\end{equation*}
where $\cE_T$ and $\cE'_T$ are the pullbacks of $\cE$ and $\cE'$ respectively to $T\times E$. This is easily seen from, say, from \cite[Theorem 5.8 and its proof via Theorems 5.6 and 5.7]{MR2223407} (also \cite[Th\'eor\`eme 7.7.6 and Corollaire 7.7.8]{ega32}). In particular, there is a universal morphism
\begin{equation}\label{eq:univmor}
  \cE_{\bH}
  \xrightarrow{\quad\cat{univ}=\tensor*[_{\cE\to \cE'}]{\cat{univ}}{}\quad}
  \cE'_{\bH}
\end{equation}
on $\bH\times E$. If $\rk\cE\ne \rk\cE'$ then
\begin{equation*}
  \bH_{max}
  :=
  \begin{cases}
    \{\text{locus where $\cat{univ}$ is onto}\}
    =
    \{\text{onto }\cE\xrightarrowdbl{\quad}\cE'\}
    &\text{ if $\rk\cE>\rk\cE'$}\\
    \{\text{locus where $\cat{univ}$ is monic with torsion-free cokernel}\}    
    &\text{ if $\rk\cE<\rk\cE'$}\\
  \end{cases}  
\end{equation*}
is precisely the open subscheme $\Hom_{max}(\cE,\cE')$ of \Cref{th:maxnondeg}. When $\rk\cE>\rk\cE'$ the kernel of $\cat{univ}$ is a rank-$(\rk\cE-\rk\cE')$ bundle over the projective $\bH_{max}$-scheme $\bH_{max}\times E$, and a dual remark applies when $\rk\cE<\rk\cE'$. The first part of (the doubtless well-known) \Cref{le:semistabopen} follows from the open nature of
\begin{itemize}[wide]
\item surjectivity and injectivity (standard: locally a morphism $\cE\to \cE'$ is a matrix, and ranks of algebraic families of matrices are lower semicontinuous, per \cite[proof of Lemma 2.6.1]{lepot-vb} or \cite[Proposition 4.7(1)]{CKS4});
  
\item (semi)stability (\cite[Theorem 2.8(B)]{maruy_open} or \cite[\S 1, Theorem 4.2]{zbMATH06600951}) ;

\item and torsion-freeness (\cite[Proposition 2.1]{maruy_open} or \cite[Th\'eor\`eme 12.2.1(i)]{ega43}).
\end{itemize}
\cite[Proposition 10 and Corollary]{shatz_dec} prove the second. 

\begin{lemma}\label{le:semistabopen}
  For morphisms $f\in \Hom_{max}(\cE,\cE')$ between two bundles, set $\cat{kc}(f)$ to be either the kernel or cokernel of $f$, depending on whether the rank of $\cE'$ is smaller (respectively larger).
  \begin{enumerate}[(1),wide]
  \item\label{item:le:semistabopen:ss} The subspaces $\Hom_{\lhook\joinrel\to}(\cE,\cE')$, $\Hom_{max}(\cE,\cE')$ and 
    \begin{equation}\label{eq:hommaxss}
      \begin{aligned}        
        \Hom_{max\mid ss}(\cE,\cE')
        &:=
          \left\{f\in \Hom_{max}(\cE,\cE')\ |\ \cat{kc}(f)\text{ semistable}\right\}\\
        &\subseteq \Hom_{max}(\cE,\cE')
          \subseteq \Hom(\cE,\cE')
      \end{aligned}      
    \end{equation}
    are open.
    
  \item\label{item:le:semistabopen:type} The map
    \begin{equation*}
      \Hom_{max}(\cE,\cE')
      \ni
      f
      \xmapsto{\quad}
      \tau(\cat{kc}(f))
    \end{equation*}
    is upper semicontinuous for the Zariski topology on the domain and the ordering of \Cref{def:type} on the codomain. In particular, the loci of maximally non-degenerate morphisms with $\cat{kc}$ of given type are locally closed.  \qedhere  
  \end{enumerate}
\end{lemma}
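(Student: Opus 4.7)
The plan is to assemble three well-known openness properties together with Shatz's upper semicontinuity of the Harder--Narasimhan polygon, all applied to the universal morphism \Cref{eq:univmor} on $\bH\times E$ and to its (co)kernel, which lives on the open locus $\bH_{max}\times E$ as a vector bundle flat over $\bH_{max}$. The two inequality cases $\rk\cE>\rk\cE'$ and $\rk\cE<\rk\cE'$ are formally dual (kernel versus cokernel), so I would carry out the argument for one of them and cite duality for the other; the case $\rk\cE=\rk\cE'$ only requires handling $\Hom_{\lhook\joinrel\to}=\Hom_{max}$.

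For \Cref{item:le:semistabopen:ss}, I would first verify that $\Hom_{\lhook\joinrel\to}(\cE,\cE')$ and $\Hom_{max}(\cE,\cE')$ are open in $\bH$. Locally on $E$ the map $\cat{univ}$ is presented by a matrix with entries in $\cO_{\bH\times U}$, and the loci where its fiberwise rank is at least the generic value $\min(\rk\cE,\rk\cE')$ are cut out by the non-vanishing of appropriate minors; this is the standard lower semicontinuity of matrix rank (\cite[proof of Lemma 2.6.1]{lepot-vb} or \cite[Proposition 4.7(1)]{CKS4}). Torsion-freeness of the cokernel, relevant when $\rk\cE<\rk\cE'$, is open by \cite[Proposition 2.1]{maruy_open} (or \cite[Th\'eor\`eme 12.2.1(i)]{ega43}). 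With these in place, over $\bH_{max}$ the (co)kernel $\cat{kc}(\cat{univ})$ is a vector bundle of constant rank on $\bH_{max}\times E$, hence a flat family of bundles on $E$ parametrized by $\bH_{max}$; openness of the semistable locus on the base of such a family is Maruyama's \cite[Theorem 2.8(B)]{maruy_open} (alternatively \cite[\S 1, Theorem 4.2]{zbMATH06600951}), yielding openness of $\Hom_{max\mid ss}(\cE,\cE')$ inside $\Hom_{max}(\cE,\cE')$ and completing the chain of inclusions in \Cref{eq:hommaxss}.

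For \Cref{item:le:semistabopen:type}, I would apply Shatz's theorem on the Harder--Narasimhan polygon in flat families of torsion-free sheaves on a smooth curve (\cite[Proposition 10 and Corollary]{shatz_dec}) to the same family $\cat{kc}(\cat{univ})$ over $\bH_{max}$: this asserts precisely that the fiberwise HN polygon (equivalently, $\tau(\cat{kc}(f))$) is upper semicontinuous for the order in \Cref{def:type}. The locally-closed claim is then immediate: the set of $f$ with $\tau(\cat{kc}(f))=\tau_0$ is the intersection of the closed locus $\{\tau\ge\tau_0\}$ with the open complement of the strictly larger closed locus $\{\tau\gneq\tau_0\}$. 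The only mild obstacle is verifying that the $\tau_0$-stratum stays inside $\bH_{max}$ rather than drifting to points where $\cat{kc}$ fails to be a bundle, but this is automatic because $\cat{kc}$ is defined only on $\bH_{max}$ and the semicontinuity statement is made relative to that base; no substantive difficulty arises, consistent with the ``doubtless well-known'' remark preceding the lemma.
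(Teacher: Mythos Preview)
Your proposal is correct and matches the paper's own argument essentially verbatim: the paper proves part \Cref{item:le:semistabopen:ss} by citing exactly the same three openness results (matrix-rank lower semicontinuity via \cite[proof of Lemma 2.6.1]{lepot-vb} or \cite[Proposition 4.7(1)]{CKS4}, Maruyama's openness of semistability \cite[Theorem 2.8(B)]{maruy_open}, and openness of torsion-freeness \cite[Proposition 2.1]{maruy_open} or \cite[Th\'eor\`eme 12.2.1(i)]{ega43}), and part \Cref{item:le:semistabopen:type} by invoking \cite[Proposition 10 and Corollary]{shatz_dec}. Your write-up simply fleshes out the flat-family setup and the locally-closed deduction a bit more explicitly than the paper does.
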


And very much in the same general vein:

\begin{proposition}\label{pr:taulcl}
  For bundles $\cE$ and $\cE'$ the following subspaces of $\Hom(\cE,\cE')$ are all locally closed.

  \begin{enumerate}[(1),wide]
  \item\label{item:le:taulcl:tau} For any type $\tau$ the space
    \begin{equation}\label{eq:homtau}
      \Hom_{\im=\tau}(\cE,\cE')
      :=
      \left\{\cE\xrightarrow{f}\cE'\ |\ \tau(\im f)=\tau\right\}.
    \end{equation}

  \item\label{item:le:taulcl:gr} The space
    \begin{equation*}
      \Hom_{\gr\im=\cF}(\cE,\cE')
      :=
      \left\{\cE\xrightarrow{f}\cE'\ |\ \gr(\im f)\cong \cF\right\}
    \end{equation*}
    for a fixed object $\cF$, with the associated graded $\gr$ as in \Cref{re:genassocgr}.

  \item\label{item:le:taulcl:iso} The space
    \begin{equation*}
      \Hom_{\im=\cF}(\cE,\cE')
      :=
      \left\{\cE\xrightarrow{f}\cE'\ |\ \im f\cong \cF\right\}
    \end{equation*}
    for a fixed bundle $\cF$. Said space is moreover connected. 
\end{enumerate}
\end{proposition}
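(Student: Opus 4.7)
The plan is to reduce all three assertions to Harder--Narasimhan-type semicontinuity on a flat family of images. The setup step is to stratify $\bH := \Hom(\cE, \cE')$ by $\zeta(\im f) = (\rk \im f, \deg \im f)$, equivalently by the Hilbert polynomial of $\coker f = \cE'/\im f$. Since $\mu_{min}(\cE) \le \mu(\im f) \le \mu_{max}(\cE')$ and $0 \le \rk \im f \le \min(\rk \cE, \rk \cE')$ on the non-zero locus, only finitely many values occur, and the flattening stratification applied to $\coker(\cat{univ})$ on $\bH \times E$ gives a finite decomposition $\bH = \bigsqcup_\zeta \bH_\zeta$ into locally closed strata on which $\coker(\cat{univ})$ is flat; hence $\im(\cat{univ}) \hookrightarrow \cE'_{\bH_\zeta}$ is flat too (kernel of a surjection between flat sheaves), with fibers the torsion-free---thus locally free---sheaves $\im f_h$ on $E$. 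This setup is where I expect the main technical work; everything after is standard semicontinuity on the flat family $\im(\cat{univ})|_{\bH_\zeta \times E}$.

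Granting the flat family, part \ref{item:le:taulcl:tau} follows from Shatz's upper semicontinuity of the HN type (the same tool invoked in \Cref{le:semistabopen}\ref{item:le:semistabopen:type}): each type-$\tau$ locus is locally closed in $\bH_\zeta$, hence in $\bH$. For \ref{item:le:taulcl:gr}, within a fixed HN-type stratum the HN filtration of $\im(\cat{univ})$ deforms algebraically, and its semistable subquotients give flat families of semistables of prescribed charges; the induced morphism to a product of moduli spaces of semistable bundles on $E$ (which separate $S$-equivalence classes) makes ``$\gr \im f \cong \gr \cF$'' a closed condition inside the HN-type stratum. For \ref{item:le:taulcl:iso}, the finitely many isomorphism types inside an $S$-equivalence class $\bigoplus_j \cE_j^{\oplus m_j}$ (pairwise non-isomorphic stable $\cE_j$) are indexed by tuples of partitions of the $m_j$, and are distinguished by the sequences $\ell \mapsto \dim\Hom(\tensor*[_\ell]{\cE_j}{}, \im f_h)$, $\ell \ge 1$, via \Cref{eq:selfexts} (the $\ell$-th difference recovers the number of parts of size $\ge \ell$); upper semicontinuity of $\dim\Hom$ in flat families then cuts out each isomorphism-class stratum as locally closed.

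For the connectedness assertion in \ref{item:le:taulcl:iso}, I would use the composition map
$$\Hom_{\twoheadrightarrow}(\cE, \cF) \times \Hom_{\lhook\joinrel\to}(\cF, \cE') \xrightarrow{\quad (g, \iota) \mapsto \iota \circ g \quad} \Hom(\cE, \cE'),$$
whose set-theoretic image is precisely $\Hom_{\im \cong \cF}(\cE, \cE')$: any $f$ with $\im f \cong \cF$ factors as $\iota \circ g$ after choosing an isomorphism $\cF \xrightarrow{\sim} \im f$. Each factor is a non-empty open subset of the affine space $\Hom(\cE, \cF)$ (resp.\ $\Hom(\cF, \cE')$) by \Cref{le:semistabopen}\ref{item:le:semistabopen:ss}, hence irreducible; the product is irreducible, and the image of a morphism from an irreducible scheme is irreducible, hence connected.
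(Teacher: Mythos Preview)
Your proof is correct and follows essentially the same route as the paper's: stratify $\bH$ into loci carrying a flat family of images, apply Shatz's semicontinuity for \Cref{item:le:taulcl:tau}, map to the product $\prod_i M(r_i,d_i)$ of coarse moduli spaces for \Cref{item:le:taulcl:gr}, distinguish isomorphism classes within an $S$-equivalence class by the semicontinuous quantities $\dim\Hom(\tensor*[_\ell]{\cS}{},-)$ for \Cref{item:le:taulcl:iso}, and deduce connectedness as the image of the composition map from the product of two open (hence irreducible) subsets of affine spaces. The only difference is in the setup: you stratify by the full charge via flattening stratification of $\coker(\cat{univ})$, whereas the paper stratifies by rank alone---your version is in fact the more careful of the two, since fixing only $\rk\im f$ does not by itself guarantee flatness of the image family (the degree can still jump).
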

\begin{proof}
  \begin{enumerate}[(1),wide]
  \item For any fixed rank $r$ the images of $f$ ranging over the locus
    \begin{equation}\label{eq:pr:taulcl:bh}
      \bH_{\rk=r}
      :=
      \Hom_{\rk=r}(\cE,\cE')
      :=
      \left\{\cE\xrightarrow{f}\cE'\ |\ \tau(\im f)=\tau\right\}
    \end{equation}
    (locally closed because definable by (non-)vanishing of minors locally on $E$) aggregate to a rank-$r$ bundle over $\bH_{\rk=r}\times E$, which will in particular be flat over $\bH_{\rk=r}$. This is thus an {\it $\bH_{\rk=r}$-parametrized algebraic family of bundles on $E$} in the sense of \cite[\S 4, p.176]{shatz_dec}, and the local closure is a direct consequence of \cite[discussion following Corollary to Proposition 10, p.183]{shatz_dec}.

  \item Having fixed a type
    \begin{equation}\label{eq:tau}
      \tau:=((r_i,d_i))_{i=1}^s
    \end{equation}
    (a coarser invariant than an associated graded object), there is a morphism
    \begin{equation*}
      \Hom_{\im=\tau}(\cE,\cE')
      \ni f
      \xmapsto{\quad}
      \left(\gr\left(\text{charge-$(r_i,d_i)$ HN-summand of $\im f$}\right)\right)_{i=1}^s
      \in
      \prod_{i=1}^s M(r_i,d_i)
    \end{equation*}
    to the product of {\it coarse moduli spaces} \cite[Theorem 7.2.1]{lepot-vb} $M(r_i,d_i)\cong E^{[\gcd(r_i,d_i)]}$ \cite[Theorem 1]{tu} classifying, respectively, associated graded objects of charge-$(r_i,d_i)$ bundles. The spaces $\Hom_{\gr\im=\cF}$ are nothing but the preimages of that morphism, hence closed in $\Hom_{\im=\tau}$ and locally closed in $\Hom$ by part \Cref{item:le:taulcl:tau}. 
    
  \item This is again a refinement on \Cref{item:le:taulcl:gr} (as that was a refinement on \Cref{item:le:taulcl:tau}): every individual $\Hom_{\gr\im=\bullet}$ is partitioned into $\Hom_{\im=\cF}$, for $\cF$ ranging over the isomorphism classes of bundles with the same associated graded object $\bullet$. For that reason, we assume $\cG:=\gr\cF$ fixed throughout.

    Consider also a stable summand $\cS\le_{\oplus} \cG$ (so one of the finitely many stable subquotients featuring in Jordan-H\"older filtrations of the relevant $\cF$), and write
    \begin{equation*}
      \begin{aligned}
        \sharp_{\cS\bullet n}\cF
        &:=
          \sharp\left\{\text{summands $\tensor*[_h]{\cS}{}$ of $\cF$ with $h\bullet n$}\right\}\\
        &\text{for}\quad
          \bullet\in\{\text{ordering symbols: }=,\ \le,\ >,\ \text{ etc.}\}
      \end{aligned}      
    \end{equation*}
    As a consequence of \Cref{eq:selfexts}, we have
    \begin{equation*}
      \begin{aligned}
        \sharp_{\cS\ge 1}\cF
        &=
          \dim\Hom(\cS,\cF)\\
        \sharp_{\cS=1}\cF+2\sharp_{\cS\ge 2}\cF
        &=
          \dim\Hom(\tensor*[_2]{\cS}{},\cF)\\
        \sharp_{\cS=1}\cF+2\sharp_{\cS = 2}\cF+3\sharp_{\cS\ge 3}\cF
        &=
          \dim\Hom(\tensor*[_3]{\cS}{},\cF)\\
        &\vdots
      \end{aligned}
    \end{equation*}
    The isomorphism class of $\cF$ is thus determined numerically by
    \begin{equation*}
      \dim\Hom(\tensor*[_h]{\cS}{},\cF)
      =
      \dim\Gamma\left(\cF\otimes \tensor*[_h]{\cS}{^*}\right)
    \end{equation*}
    for varying $h$. Those quantities being upper semicontinuous \cite[Theorem III.12.8]{hrt} in $\cF$ ranging over $\Hom_{\gr\im=\cG}$, their joint level sets will be locally closed. 

    As for connectedness, simply observe that $\Hom_{\im=\cF}(\cE,\cE')$ is the image of the morphism
    \begin{equation*}
      \left\{\cE\xrightarrowdbl{\quad\text{epic}\quad}\cF\right\}
      \times
      \left\{\cF\lhook\joinrel\xrightarrow{\quad\text{monic}\quad}\cE'\right\}
      \xrightarrow{\quad\text{composition}\quad}
      \Hom(\cE,\cE')
    \end{equation*}
    with connected domain.
  \end{enumerate}
\end{proof}

\begin{remark}\label{re:strataareschemes}
  \Cref{le:semistabopen} and \Cref{pr:taulcl} (henceforth largely implicit) are what allows dimension estimates of and morphisms defined on spaces of morphisms whose (co)kernel has specific types or is (semi)stable: such spaces are all naturally schemes. 
\end{remark}

There is also an $\Ext$ version of \Cref{pr:taulcl} ($\Ext^1$ as opposed to $\Hom$, that is). We record it here for whatever intrinsic interest it may possess, and also because local closure is achieved by rather different (and less direct) means in a particular case in \cite[Theorem 4.7]{00-leaves_xv3}. Throughout the statement and proof we denote the middle term $\bullet$ of an extension
\begin{equation*}
  \xi
  \quad:\quad
  0\to
  \cE'
  \lhook\joinrel\xrightarrow{\quad}
  \bullet
  \xrightarrowdbl{\quad}
  \cE
  \to 0  
\end{equation*}
corresponding \cite[\S III.5.2]{gm_halg_2e_2003} to a class $\xi\in \Ext^1(\cE,\cE')$ by $\md(\xi)$. 

\begin{proposition}\label{pr:taulcl.ext}
  For bundles $\cE$ and $\cE'$ the following subspaces of $\Ext^1(\cE,\cE')$ are all locally closed.

  \begin{enumerate}[(1),wide]
  \item\label{item:le:taulcl.ext:tau} For any type $\tau$, the space
    \begin{equation}\label{eq:homtau}
      \Ext^1_{\tau}(\cE,\cE')
      :=
      \left\{\xi\in \Ext^1(\cE,\cE')\ |\ \tau(\md(\xi))=\tau\right\}.
    \end{equation}

  \item\label{item:le:taulcl.ext:gr} The space
    \begin{equation*}
      \Ext^1_{\gr=\cF}(\cE,\cE')
      :=
      \left\{\xi\in \Ext^1(\cE,\cE')\ |\ \gr(\md(\xi))\cong \cF\right\}
    \end{equation*}
    for a fixed object $\cF$.

  \item\label{item:le:taulcl.ext:iso} The space
    \begin{equation*}
      \Ext^1_{\cF}(\cE,\cE')
      :=
      \left\{\xi\in \Ext^1(\cE,\cE')\ |\ \md(\xi)\cong \cF\right\}
    \end{equation*}
    for a fixed bundle $\cF$. 
\end{enumerate}
\end{proposition}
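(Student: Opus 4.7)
My plan is to reduce all three parts to the corresponding statements in \Cref{pr:taulcl} by constructing a universal middle term parametrized by $\bE:=\Ext^1(\cE,\cE')$. First I would note that the identity endomorphism of $\bE$, viewed as a global section of $\Ext^1(\cE,\cE')\otimes_{\Bbbk}\cO_{\bE}$, corresponds under base change (which behaves tamely over the curve $E$ because $\Ext^i_E$ vanishes for $i\ge 2$) to a universal extension class $\xi_{\cat{univ}}\in \Ext^1_{\bE\times E}(\cE_{\bE},\cE'_{\bE})$, and hence to a universal short exact sequence
\[
0\to \cE'_{\bE}\to \cM\to \cE_{\bE}\to 0
\]
on $\bE\times E$ whose restriction to $\{\xi\}\times E$ is the extension classified by $\xi$. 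Fiberwise $\cM_{\xi}$ is an extension of a bundle by a bundle on the smooth curve $E$ and hence locally free; combined with flatness over $\bE$ (inherited from the outer terms), this presents $\cM$ as an $\bE$-parametrized algebraic family of bundles on $E$ in the sense of \cite[\S 4, p.176]{shatz_dec}.

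With this universal family in hand, I would stratify $\bE$ by the behavior of the fibers $\cM_{\xi}$ via verbatim analogues of the three steps in the proof of \Cref{pr:taulcl}. Part \Cref{item:le:taulcl.ext:tau} then follows from \cite[discussion following Corollary to Proposition 10, p.183]{shatz_dec}, as in \Cref{pr:taulcl}\Cref{item:le:taulcl:tau}. For part \Cref{item:le:taulcl.ext:gr}, having fixed $\tau=((r_i,d_i))_i$, I would refine that stratification via the morphism $\Ext^1_{\tau}\to\prod_i M(r_i,d_i)$ recording the associated graded of each HN subquotient of $\cM_{\xi}$, exactly as in \Cref{pr:taulcl}\Cref{item:le:taulcl:gr}. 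For part \Cref{item:le:taulcl.ext:iso}, I would further refine \Cref{item:le:taulcl.ext:gr} via upper semicontinuity of $\xi\mapsto \dim\Gamma(\cM_{\xi}\otimes\tensor*[_h]{\cS}{^*})$ from \cite[Theorem III.12.8]{hrt}, for $\cS$ ranging over the stable summands of the fixed $\gr\cF$ and $h\ge 1$, together with the numerical characterization of the isomorphism class of $\cF$ in terms of those dimensions, as in \Cref{pr:taulcl}\Cref{item:le:taulcl:iso}.

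The main (if modest) obstacle is the bookkeeping around the universal extension: one must verify that the base-change map $\Ext^1(\cE,\cE')\otimes_{\Bbbk}\cO_{\bE}\to \Ext^1_{\bE\times E}(\cE_{\bE},\cE'_{\bE})$ is indeed an isomorphism and that the resulting middle term is flat over $\bE$. Both are standard once one observes that $\bE$ is affine, $E$ is a curve, and the outer terms of the displayed short exact sequence are flat over $\bE$. Beyond that, everything parallels \Cref{pr:taulcl}, with ``image of $f$'' uniformly replaced by ``middle term of $\xi$''.
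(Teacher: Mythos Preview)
Your proof is correct and follows the same strategy as the paper's: construct a universal middle term and then invoke the stratification arguments of \Cref{pr:taulcl} verbatim. The paper's implementation differs only in packaging: it builds the universal extension $0\to \Ext^1(\cE,\cE')^*\otimes\cE'\to\cU\to\cE\to 0$ on $E$ itself, recovers each $\md(\xi)$ as the pushout along $\xi\colon\Ext^1(\cE,\cE')^*\to\Bbbk$, and thereby factors $\Ext^1(\cE,\cE')^{\times}$ through the Quot scheme $\Quot_{\cU/E}$ before applying Shatz, whereas you work directly with the family on $\bE\times E$.
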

\begin{proof}
  The proof is entirely parallel to that of \Cref{pr:taulcl}, the main difference being that the algebraic family of bundles one argues with is parametrized by a different space (than the $\bH_{\rk=r}$ of \Cref{eq:pr:taulcl:bh}). We indicate how that comes about, setting aside the repetitive portion of the argument.
  
  There is a {\it universal extension}
  \begin{equation*}
    \xi_{univ}
    \quad:\quad
    0\to
    \Ext^1(\cE,\cE')^*\otimes \cE'
    \lhook\joinrel\xrightarrow{\quad}
    \cU
    \xrightarrowdbl{\quad}
    \cE
    \to 0    
  \end{equation*}
  defined simply as
  \begin{equation*}
    \xi_{univ}:=\id
    \in
    \End(\Ext^1(\cE,\cE'))
    \cong
    \Ext^1(\cE,\cE')\otimes \Ext^1(\cE,\cE')^*
    \cong
    \Ext^1(\cE,\Ext^1(\cE,\cE')^*\otimes \cE').
  \end{equation*}
  The universality consists in its classifying all extensions of $\cE$ by $\cE'$: $\xi\in \Ext^1(\cE,\cE')$ can be recovered tautologically as the bottom row in
  \begin{equation}\label{eq:pushext}
    \begin{tikzpicture}[>=stealth,auto,baseline=(current  bounding  box.center)]
      \path[anchor=base]
      (0,0) node (l0) {$0$}
      +(2,1) node (lu) {$\Ext^1(\cE,\cE')^*\otimes \cE'$}
      +(2,-1) node (ld) {$\cE'$}      
      +(5,1) node (mu) {$\cU$}
      +(4,-1) node (md) {$\bullet$}
      +(6,0) node (r) {$\cE$}
      +(7,0) node (r0) {$0$}
      +(3.3,0) node (text) {$\text{pushout}$}
      ;
      \draw[->] (l0) to[bend left=6] node[pos=.5,auto] {$\scriptstyle $} (lu);      
      \draw[->] (l0) to[bend right=6] node[pos=.5,auto] {$\scriptstyle $} (ld);
      \draw[right hook->] (lu) to[bend left=6] node[pos=.5,auto] {$\scriptstyle $} (mu);
      \draw[right hook->] (ld) to[bend right=6] node[pos=.5,auto] {$\scriptstyle $} (md);
      \draw[->] (r) to[bend right=0] node[pos=.5,auto] {$\scriptstyle $} (r0);
      \draw[->>] (mu) to[bend left=6] node[pos=.5,auto] {$\scriptstyle $} (r);
      \draw[->>] (md) to[bend right=6] node[pos=.5,auto] {$\scriptstyle $} (r);
      \draw[->] (lu) to[bend right=6] node[pos=.5,auto,swap] {$\scriptstyle \xi\otimes\id_{\cE'}$} (ld);
      \draw[->] (mu) to[bend left=6] node[pos=.5,auto] {$\scriptstyle $} (md);
    \end{tikzpicture}
  \end{equation}
  with $\xi$ along the left-hand vertical arrow regarded as a map $\Ext^1(\cE,\cE')^*\xrightarrow{\xi}\Bbbk$. Restricting attention at no loss to non-zero $\xi$ (hence non-split extensions) only, so that the vertical maps in \Cref{eq:pushext} are both epic,
  \begin{equation*}
    \Ext^1(\cE,\cE')^{\times}:=\Ext^1(\cE,\cE')\setminus\{0\}
    \ni
    \xi
    \xmapsto{\quad}
    \left(\text{right-hand vertical arrow in \Cref{eq:pushext}}\right)
  \end{equation*}
  can be regarded as a map to the {\it Quot scheme} \cite[\S 5.1.4]{MR2223407} $\Quot:=\Quot_{\cU/E}$ parametrizing quotients $\cU\xrightarrowdbl{}\bullet$ identified, as in \cite[\S\S 5.1.2 and 5.1.3]{MR2223407}, if
  \begin{itemize}
  \item their respective kernels are {\it equal};
  \item or equivalently, they are isomorphic as objects in the {\it comma category} \cite[Exercise 3K]{ahs} $\cU\downarrow \Coh(E)$ of arrows $\cU\xrightarrow{}\diamond$ in $\Coh(E)$.
  \end{itemize}
  The argument can then proceed as before, in \Cref{pr:taulcl}: the quotients in $\Quot$ of any given type cut out a locally closed locus by \cite[Proposition 10 and its Corollary]{shatz_dec} again, etc.
\end{proof}

\begin{remark}\label{re:taulcl.ext}
  Given that the projectivizations
  \begin{equation*}
    \bP\Ext_{\cF}^1(\cE,\cE')
    :=
    \Ext_{\cF}^1(\cE,\cE')^{\times}/\Bbbk^{\times}
  \end{equation*}
  feature in \cite[Theorem 1]{FO98} (also \cite[Theorem 1.1]{00-leaves_xv3}, \cite[Theorem A]{ch_sympl-qnk_xv1}) as {\it symplectic leaves} \cite[Definition 4.9]{clm} of a Poisson structure on $\bP\Ext^1(\cE,\cE')$ for $\cE':=\cO$ and a stable $\cE$, their connectedness is crucial. I do not know whether connectedness obtains in the full generality of \Cref{pr:taulcl.ext}, but it certainly does if at least one of $\cE$ and $\cE'$ is stable: if $\cE'$ is, say, the map
  \begin{equation}\label{eq:epi2ext}
    \begin{tikzpicture}[>=stealth,auto,baseline=(current  bounding  box.center)]
      \path[anchor=base] 
      (0,0) node (u) {$\left(\text{non-split }\cF\xrightarrowdbl[\quad]{\pi}\cE\text{ with semistable kernel}\right)$}
      +(0,-2) node (d) {$\left(\text{class of unique extension }0\to \bullet\lhook\joinrel\to \cF\xrightarrowdbl{\pi} \cE\to 0\right)$}
      +(0,-2.5) node (in) {\rotatebox[origin=c]{-90}{$\in$}}
      +(0,-3) node (pext) {$\bP\Ext^1(\cE,\cE')$}
      ;
      \draw[|->] (u) to[bend left=0] node[pos=.5,auto] {$\scriptstyle $} (d);
    \end{tikzpicture}
  \end{equation}
  is well-defined, has stable kernel (for it will have coprime rank and degree $\rk\cE'$ and $\deg\cE'$), and has $\bP\Ext_{\cF}^1(\cE,\cE')$ as its image, for its kernel $\bullet$, assumed semistable, must be isomorphic to $\cE'$: the unique \cite[Proposition 14 and Lemma 30]{tu} (semi)stable bundle of given rank and determinant $\det\cF\otimes (\det\cE)^{-1}$. The domain of \Cref{eq:epi2ext}, if non-empty (or there is nothing to prove), is open dense in $\Hom(\cF,\cE)$ by the open nature of (semi)stability \cite[Theorem 2.8(B)]{maruy_open} and hence connected. 
\end{remark}

\begin{theorem}\label{th:maxnondeg}
  Let $\cE$ and $\cE'$ be non-zero indecomposable bundles on the elliptic curve $E$ with slopes $\mu(\cE)<\mu(\cE')$.
  \begin{enumerate}[(1),wide]
  \item\label{item:th:maxnondeg:diffrk} If $\cE$ and $\cE'$ have unequal ranks, the subspace
    \begin{equation}\label{eq:hommaxinhom}
      \Hom_{max}(\cE,\cE')\subset \Hom(\cE,\cE')\ne \{0\}
    \end{equation}
    consisting of maximally non-degenerate morphisms is open dense.

  \item\label{item:th:maxnondeg:samerk} If $\rk\cE=\rk\cE'$ then $\Hom_{\lhook\joinrel\to}(\cE,\cE')$ is open dense in $\Hom(\cE,\cE')$. 
  \end{enumerate}
\end{theorem}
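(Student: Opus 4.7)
The plan is to reduce both density assertions to non-emptiness using that $\Hom(\cE,\cE')$ is an affine (hence irreducible) $\Bbbk$-space of positive dimension $\braket{\cE\to\cE'}$ by \Cref{eq:degrk}, while $\Hom_{max}$ and $\Hom_{\hookrightarrow}$ are open by \Cref{le:semistabopen}\Cref{item:le:semistabopen:ss}. The same dimension formula yields the non-vanishing asserted in \Cref{eq:hommaxinhom}. It therefore suffices to show that the bad locus---morphisms failing the prescribed non-degeneracy---is a proper subset, which I would establish by a dimension estimate.

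I would stratify the bad locus by the isomorphism class of the image $\cG := \im f$. Every bad $f$ factors as $\cE \twoheadrightarrow \cG \hookrightarrow \cE'$ and falls into one of two types: either (i) $\rk\cG<\min(\rk\cE,\rk\cE')$ (generic-rank drop), or (ii) only in part \Cref{item:th:maxnondeg:diffrk}, $\rk\cG=\min(\rk\cE,\rk\cE')$ but the inclusion $\cG\hookrightarrow\cE'$ is not saturated (torsion cokernel), so that $\cG$ sits strictly inside its saturation $\tilde\cG\subseteq\cE'$. Each stratum is locally closed by \Cref{pr:taulcl}\Cref{item:le:taulcl:iso}; presenting it as the image of the composition
\[
  \Hom_{\twoheadrightarrow}(\cE,\cG)\times \Hom_{\hookrightarrow}(\cG,\cE')\xrightarrow{\quad}\Hom(\cE,\cE')
\]
with $\Aut(\cG)$-torsor fibers yields
\[
  \dim\Hom_{\im=\cG}(\cE,\cE')\le\dim\Hom(\cE,\cG)+\dim\Hom(\cG,\cE')-\dim\End(\cG).
\]
Semistability of the indecomposable $\cE$ and $\cE'$ forces $\mu(\cE)\le\mu(\cG)\le\mu(\cE')$, so each hom-space on the right is computable via \Cref{eq:degrk} (under strict slope inequality) or \Cref{eq:selfexts} (at the boundary). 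Summing over the moduli of admissible $\cG$ of each charge (of dimension at most $\rk\cG$) and invoking the bilinearity of $\braket{\bullet\to\bullet}$ on $K(E)$, I expect the aggregate to fall strictly below $\braket{\cE\to\cE'}$, giving density.

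The hard part will be this dimension-arithmetic bookkeeping. Even though $\cE$ and $\cE'$ are indecomposable, the intermediate $\cG$ need not be, so its Atiyah decomposition into indecomposable summands must be tracked; and the boundary cases $\mu(\cG)\in\{\mu(\cE),\mu(\cE')\}$ force the use of \Cref{eq:selfexts}, whose contribution inflates the relevant hom-spaces whenever $\cG$ shares a stable summand with $\cE$ or $\cE'$. The torsion-cokernel stratum (type (ii)) in part \Cref{item:th:maxnondeg:diffrk} requires an additional layer of stratification by $\tilde\cG$ (equal to $\cE'$ when $\rk\cE>\rk\cE'$, a proper rank-$\rk\cE$ subbundle when $\rk\cE<\rk\cE'$), whose moduli and hom contributions must be folded into the bound.
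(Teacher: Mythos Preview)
Your outline is the paper's own strategy: reduce density to showing the bad locus has dimension strictly below $\braket{\cE\to\cE'}$ by stratifying over the image. Two refinements close it.

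First, dualize at the outset to assume $\rk\cE\ge\rk\cE'$. Your type-(ii) stratum then disappears: a morphism is bad exactly when $\tau(\im f)\ne\tau(\cE')$ (same charge forces $\im f=\cE'$), so stratifying by the finitely many possible HN types of the image already suffices, and the torsion-cokernel case needs no separate treatment.

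Second---and this is the ingredient you are missing for the ``hard part'' you flag---split the image $\cF$ by slope as $\cF_\leftarrow\oplus\cF_|\oplus\cF_\rightarrow$ with $\mu(\cF_\leftarrow)=\mu(\cE)$, $\mu(\cF_\rightarrow)=\mu(\cE')$, and $\cF_|$ of strictly intermediate slopes. The crucial observation is that the indecomposable $\cE$ is \emph{uniserial} in the abelian category of slope-$\mu(\cE)$ semistable bundles (it is ${}_h\cE_0$ for a single stable $\cE_0$, \Cref{re:selfext}), so any equal-slope quotient $\cF_\leftarrow$ of $\cE$ is again of the form ${}_{h'}\cE_0$, hence indecomposable, and there are only \emph{finitely many} of them. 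Likewise for $\cF_\rightarrow\subset\cE'$. This kills the boundary moduli contribution outright: no delicate balancing of \Cref{eq:selfexts} against $\dim\End$ is needed. For the middle piece $\cF_|$ the moduli (one copy of $E$ per indecomposable summand) cancels against $\dim\Aut(\cF_|)$ (at least one scalar per summand), and the remaining count is the bilinear identity
\[
\braket{\cE\to\cE'}-\braket{\cE\to\cF}-\braket{\cF\to\cE'}
=(\mu(\cF)-\mu(\cE))\,\rk\cE\,(\rk\cE'-\rk\cF)+(\mu(\cE')-\mu(\cF))\,(\rk\cE-\rk\cF)\,\rk\cE',
\]
a sum of two non-negative terms (since $\rk\cF\le\rk\cE'\le\rk\cE$) which both vanish only when $\cF=\cE'$, or in part \Cref{item:th:maxnondeg:samerk} only when $\rk\cF=\rk\cE$.
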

\begin{proof}
  The full hom space is non-zero from \cite[Lemma 2.5]{ch_sympl-qnk_xv1}, and density will follow from openness (\Cref{le:semistabopen}) as soon as we argue that at least {\it one} morphism is maximally non-degenerate.
  
  Dualizing ($\cE\leftrightarrow \cE^*$ and $\cE'\leftrightarrow \cE'^*$) and changing the roles of $\cE$ and $\cE'$ will turn the two cases $\rk\cE>\rk \cE'$ and $\rk \cE<\rk \cE'$ of \Cref{item:th:maxnondeg:diffrk} into one another, while \Cref{item:th:maxnondeg:samerk} is self-dual (the functor $\bullet^*\cong {\cH}om(\bullet,\cO)$ on the category of vector bundles an injection with torsion cokernel into another such). It thus suffices to assume $\rk\cE\ge\rk \cE'$ ; we moreover focus on item \Cref{item:th:maxnondeg:diffrk}, indicating at the very end of the proof how the argument in fact delivers \Cref{item:th:maxnondeg:samerk} as well. The task, then, is to show that
  \begin{equation}\label{eq:mumurkrk}
    \mu(\cE)<\mu(\cE')
    \quad\text{and}\quad
    \rk(\cE)>\rk(\cE')
    \quad
    \xRightarrow{\quad}
    \quad
    \exists\text{ an epimorphism }\cE\xrightarrowdbl{\quad}\cE'.
  \end{equation}
  In general, morphisms $\cE\to \cE'$ admit epi-mono factorizations
  \begin{equation}\label{eq:comppair}
    \cE
    \xrightarrowdbl{\quad\pi\quad}
    \cF
    \lhook\joinrel\xrightarrow{\quad\iota\quad}
    \cE'
  \end{equation}
  through quotient bundles $\cF$ of $\cE$, which fall into finitely many HN types \cite[\S 5.1, Example]{lepot-vb}.
  
  It is thus enough to argue, having fixed a type $\tau\ne \tau(\cE')$, that the space of morphisms $\cE\to \cE'$ factoring as in \Cref{eq:comppair} through a type-$\tau$ bundle $\cF$ has dimension strictly less than
  \begin{equation}\label{eq:murkrk}
    \deg\left(\cE'\otimes \cE^*\right)
    =
    \mu\left(\cE'\otimes \cE^*\right)\cdot \rk \cE\cdot \rk \cE'
    \xlongequal{\ \text{\cite[Theorem 10.2.1]{lepot-vb}}\ }
    \left(\mu(\cE')- \mu(\cE)\right)\cdot \rk \cE\cdot \rk \cE'.
  \end{equation}
  Each HN summand of $\cF$ will have slope dominating that of $\cE$ and dominated by that of $\cE'$ (because $\cE$ and $\cE'$ are indecomposable and hence \cite[Appendix A, Fact]{tu} semistable). The HN splitting of $\cF$ will be of the form
  \begin{equation}\label{eq:fdec}
    \cF
    \cong
    \cF_{\leftarrow}
    \oplus
    \bigoplus_{1\le i\le s}\cF_{|,i}
    \oplus
    \cF_{\rightarrow},
  \end{equation}
  where
  \begin{itemize}
  \item $\mu(\cE)=\mu(\cF_{\leftarrow})<\mu(\cE')$ and $\cF_{\leftarrow}$ is indecomposable, with the kernels of $\cE\xrightarrowdbl{}\cF_{\leftarrow}$ ranging over finitely many subbundles of $\cE$.
    
  \item Each ``intermediate'' $\cF_{|,i}$ is semistable of slope {\it strictly} buffered by $\mu(\cE)$ and $\mu(\cE')$.

  \item And, at the other extreme from $\cF_{\leftarrow}$, $\cF_{\rightarrow}$ is indecomposable of slope $\mu(\cF_{\rightarrow})=\mu(\cE')$, and ranges over finitely many subbundles of $\cE'$. 
  \end{itemize}
  The trichotomy
  \begin{equation*}
    \mu(\cE)
    \quad\text{vs.}\quad
    \text{ open interval }(\mu(\cE),\mu(\cE'))
    \quad\text{vs.}\quad
    \mu(\cE')
  \end{equation*}
  is valid for the slopes of the HN summands in any case, so only the indecomposability claims require justification. To that end, consider $\cF_{\leftarrow}$ (the discussion of $\cF_{\rightarrow}$ being entirely parallel). The epimorphism $\cE\xrightarrowdbl{}\cF_{\leftarrow}$ lies in the abelian category \cite[Proposition 5.3.6]{lepot-vb} of semistable bundles of slope $\mu:=\mu(\cE)=\mu(\cF_{\rightarrow})$. $\cE$ being indecomposable, it is an iterated self-extension of a stable bundle $\cE_0$ of charge
  \begin{equation*}
    (r,d),\quad \frac dr=\mu(\cE)\text{ in lowest terms}
  \end{equation*}
  (see \Cref{re:selfext}). Furthermore, the length of $\cE$ (i.e. the number of simple subquotients in a {\it Jordan-H\"older filtration} \cite[p.76]{lepot-vb}, all isomorphic to $\cE_0$) is $h_{\cE}:=\frac{\rk \cE}{r}=\frac{\deg\cE}{d}$. The same goes for $\cF_{\rightarrow}$, which is an iterated self-extension of the same $\cE_0$ of length only
  \begin{equation*}
    h_{\cF_{\rightarrow}}:=\frac{\rk \cF_{\rightarrow}}{r}=\frac{\deg\cF_{\rightarrow}}{d} \le h_{\cE}.
  \end{equation*}
  Consider, now, the contributions the three types of summands make to the (dimension of the) space of morphisms $\cE\to \cE'$ factorizable through \Cref{eq:fdec}. 
  
  \begin{enumerate}[(I),wide]
  \item\label{item:th:maxnondeg:mid}{\bf : $\left(\cF_{|,i}\right)$} The slopes of the HN summands of $\cF_{|,i}$ are ordered {\it strictly} between those of $\cE$ and $\cE'$, so the relations
    \begin{equation*}
      \begin{aligned}
        \dim\Hom(\cE,\cF_{|})&=\deg(\cF_{|}\otimes \cE^*)
                               =\left(\mu(\cF_{|})- \mu(\cE)\right)\cdot \rk \cE\cdot \rk \cF_{|}
                               \quad\text{and}\\
        \dim\Hom(\cF_{|},\cE')&=\deg(\cE'\otimes \cF_{|}^*)
                                =\left(\mu(\cE')- \mu(\cF_{|})\right)\cdot \rk \cF_{|}\cdot \rk \cE'
                                \quad\text{\Cref{eq:degrk}}
      \end{aligned}
    \end{equation*}
    still hold for
    \begin{equation}\label{eq:fbar}
      \cF_{|}:=\bigoplus_i \cF_{|,i}. 
    \end{equation}
    That summand thus contributes 
    \begin{equation}\label{eq:sumdiff}
      \left(\mu(\cF_{|})- \mu(\cE)\right)\cdot \rk \cE\cdot \rk \cF_{|}
      +
      \left(\mu(\cE')- \mu(\cF_{|})\right)\cdot \rk \cF_{|}\cdot \rk \cE'
    \end{equation}
    to the dimension of the space of composable pairs \Cref{eq:comppair}.

    By \cite[Theorem 10]{Atiyah} again, each of the $s$ (say) indecomposable summands of the possible $\cF_{|}$ of fixed type is parametrized by $E$, adding at most $s$ to \Cref{eq:sumdiff} as $\cF_{|}$ ranges over the given type. On the other hand, $\Aut(\cF_{|})$ operates on the space of composable pairs \Cref{eq:comppair} by
    \begin{equation*}
      (\pi,\iota)
      \xmapsto[\alpha\in \Aut(\cF_{|,i})]{\quad\alpha\triangleright\quad}
      (\alpha\circ\pi,\iota\circ\alpha^{-1})
    \end{equation*}
    without altering the composition; this {\it subtracts} (at least) $s$ from the target dimension, since $\Aut(\cF_{|})$ contains at least the automorphisms scaling each of the $s$ summands. This gives \Cref{eq:sumdiff} as an upper bound for the contribution of the middle summand \Cref{eq:fbar} of \Cref{eq:fdec} to the target dimension.
    

    
  \item\label{item:pr:maxnondeg:=<} {\bf : $\left(\cF_{\leftarrow}\right)$} The space of compositions \Cref{eq:comppair} through $\cF_{\leftarrow}$ has dimension
    \begin{equation}\label{eq:flcontrib}
      \begin{aligned}
        \dim\Hom(\cF_{\leftarrow},\cE')
        =
        \deg(\cE'\otimes \cF_{\leftarrow}^*)
        &=
          \left(\mu(\cE')- \mu(\cF_{\leftarrow})\right)\cdot \rk \cF_{\leftarrow}\cdot\rk \cE'\\
        &=
          \left(\mu(\cE')- \mu(\cE)\right)\cdot \rk \cF_{\leftarrow}\cdot\rk \cE'. 
      \end{aligned}      
    \end{equation}
    
    
  \item\label{item:pr:maxnondeg:<=} {\bf : $\left(\cF_{\rightarrow}\right)$} This is not materially different from \Cref{item:pr:maxnondeg:=<}, so we omit the argument (which consists of reversing the roles of $\cE$ and $\cE'$ in the preceding discussion, working with embeddings in place of surjections, etc.). The analogue of \Cref{eq:flcontrib} is
    \begin{equation}\label{eq:frcontrib}
      \begin{aligned}
        \dim\Hom(\cE,\cF_{\rightarrow})
        =
        \deg(\cF_{\rightarrow}\otimes \cE^*)
        &=
          \left(\mu(\cF_{\rightarrow})- \mu(\cE)\right)\cdot \rk \cE\cdot \rk \cF_{\rightarrow}\\
        &=
          \left(\mu(\cE')- \mu(\cE)\right)\cdot \rk \cE\cdot \rk \cF_{\rightarrow}. 
      \end{aligned}      
    \end{equation}
  \end{enumerate}
  To sum it all up, we will be done once we have argued that unless $\cF=\cE'$,
  \begin{equation}\label{eq:keyineq}
    \text{\Cref{eq:sumdiff}}
    +
    \text{\Cref{eq:flcontrib}}
    +
    \text{\Cref{eq:frcontrib}}
    <
    \text{\Cref{eq:murkrk}};
  \end{equation}
  note here that we have fixed specific $\cF_{\leftrightarrow}$, for, as indicated, there are finitely many choices for each.
  
  Because
  \begin{equation*}
    \left(\mu(\bullet)-\mu(\square)\right)\cdot \rk\bullet\cdot \rk\square
  \end{equation*}
  is bilinear (i.e. additive in each variable, keeping the other fixed), the left-hand side of \Cref{eq:keyineq} is nothing but 
  \begin{equation*}
    \left(\mu(\cF)- \mu(\cE)\right)\cdot \rk \cE\cdot \rk \cF
    +
    \left(\mu(\cE')- \mu(\cF)\right)\cdot \rk \cF\cdot \rk \cE'.
  \end{equation*}
  Subtracting this quantity from
  \begin{equation*}
    \text{\Cref{eq:murkrk}}
    =
    \left(\mu(\cF)- \mu(\cE)\right)\cdot \rk \cE\cdot \rk \cE'
    +
    \left(\mu(\cE')- \mu(\cF)\right)\cdot \rk \cE\cdot \rk \cE'
  \end{equation*}
  produces
  \begin{equation}\label{eq:2nonnegterms}
    \left(\mu(\cF)- \mu(\cE)\right)\cdot \rk \cE\cdot (\rk \cE'-\rk \cF)
    +
    \left(\mu(\cE')- \mu(\cF)\right)\cdot (\rk \cE-\rk \cF)\cdot \rk \cE',
  \end{equation}
  a sum with non-negative terms because $\mu(\cE)\le \mu(\cF)\le \mu(\cE')$ (with at least one inequality strict, since $\mu(\cE)<\mu(\cE')$) and $\rk(\cF)\le \rk(\cE')<\rk(\cE)$. The only way to have {\it both} terms of \Cref{eq:2nonnegterms} vanish is thus
  \begin{equation*}
    \mu(\cF)=\mu(\cE')>\mu(\cE)
    \xRightarrow{\quad}
    \rk\cF=\rk \cE',
  \end{equation*}
  so that $\cF\cong \cE'$ (the ranks and slopes, hence also degrees, being equal).

  To conclude, observe that the argument holds for $\rk\cE=\rk\cE'$ as well, up to the very end: the vanishing of (both terms of) \Cref{eq:2nonnegterms} implies
  \begin{equation*}
    \rk\cF<\rk\cE=\rk\cE'
    \xRightarrow{\quad}
    \mu(\cE)=\mu(\cF)=\mu(\cE'),    
  \end{equation*}
  contradicting $\mu(\cE)<\mu(\cE')$ and hence proving that $\rk\cF=\rk\cE=\rk\cE'$ (all that claim \Cref{item:th:maxnondeg:samerk} requires).   
\end{proof}

\begin{remarks}\label{res:pr:maxnondeg:post}
  \begin{enumerate}[(1),wide]
    
  \item\label{item:res:pr:maxnondeg:post:recpolish} Cf. \cite[Corollary 14.11]{Polishchuk-book}, recoverable from \Cref{th:maxnondeg} as a special case: for stable (hence indecomposable \cite[Lemma 12]{tu}) $\cE$ and $\cE'$ with $\mu(\cE)<\mu(\cE')$, if $\dim\Hom(\cE,\cE')=1$ then any non-zero morphism $\cE\to \cE'$ is epic or monic, depending on whether $\rk\cE>\rk\cE'$ or $\rk\cE<\rk\cE'$ respectively. 
    
  \item\label{item:res:pr:maxnondeg:post:strictineq} Both strict inequality requirements in \Cref{th:maxnondeg}\Cref{item:th:maxnondeg:diffrk} are necessary, as neither is sufficient if the other is relaxed to (possible) equality:
    \begin{enumerate}[{},wide]
    \item {\bf Unequal ranks.} A degree-1 line bundle $\cO(z)$, $z\in E$ has global sections, but there are no {\it epi}morphisms $\cO\to \cO(z)$ (even though the slopes are strictly ordered). This makes it clear why the modification in \Cref{item:th:maxnondeg:samerk}, allowing for torsion in the cokernel, was necessary.
            
    \item {\bf Unequal slopes.} As recalled in \cite[Remark 2.6(2)]{ch_sympl-qnk_xv1}, degree-0 line bundles have no non-zero sections, so in particular cannot be surjected upon by trivial bundles (no matter how high the rank).
      
      By induction on $k$, they also cannot be surjected upon by the unique \cite[Theorem 5]{Atiyah} indecomposable rank-$k$ degree-0 vector bundle with non-zero section space ($\tensor*[_k]{\cO}{}$ in the notation of \Cref{re:selfext}), so even indecomposability will not make up the deficiency.
    \end{enumerate}
  \item\label{item:res:pr:maxnondeg:post:indec} Indecomposability is also essential to \Cref{th:maxnondeg}, on both sides (see \Cref{cor:th:maxnondeg:dec} below though, on how it can be weakened appropriately):
    \begin{enumerate}[{},wide]
    \item {\bf Large-rank bundle.} Or: in proving claim \Cref{eq:mumurkrk}, we need the domain $\cE$ to be indecomposable. Specifically, indecomposability was used crucially in analyzing the summand $\cF_{\leftarrow}$ of \Cref{eq:fdec}. Indeed, there are no epimorphisms
      \begin{equation}\label{eq:item:res:pr:maxnondeg:post:indec:o2op}
        \cO^k
        =:
        \cE
        \xrightarrow{\quad}
        \cE'
        :=
        \cO(p)
        ,\quad\text{arbitrary } k\in \bZ_{\ge 0}
        \quad\text{and}\quad
        p\in E.
      \end{equation}
      The more general phenomenon at work here is the fact that {\it no} semistable slope-1 bundle $\cE'$ can be generated by its global sections: the canonical map
      \begin{equation*}
        \Gamma(\cE')\otimes \cO
        \overset{\text{\Cref{eq:degrk}}}{\cong}
        \cO^{\rk \cE'}
        \xrightarrow{\quad}
        \cE'
      \end{equation*}
      is an embedding with torsion cokernel, and morphisms into $\cE'$ from {\it any} $\cO^k$ must factor through its image.
      
      Note also in passing that this argument (which applies all the more to bundles of slope {\it less} than 1) also provides a converse to \cite[Lemma 4.8(4)]{CKS3}, to the effect that semistable bundles of slope $>1$ {\it are} generated by global sections. Coupling it with \cite[Lemma 4.8(3) and (4)]{CKS3} yields the conclusion that a semistable bundle on $E$ is generated by global sections precisely when its slope is $>1$. 
      
    \item {\bf Small-rank bundle.} The claim now is that \Cref{eq:mumurkrk} (in its reliance on the indecomposability of the summand $\cF_{\rightarrow}$ of \Cref{eq:fdec}) also requires that $\cE'$ be indecomposable, even if we already assume $\cE$ is. Take, say,
      \begin{equation}\label{eq:item:res:pr:maxnondeg:post:indec:e2l2}
        \begin{aligned}
          \cE\text{ indecomposable with }\zeta(\cE)=(k,n)=(k,kd-1),\quad k>2\\
          \cE':=\cL^{\oplus 2}
          ,\quad
          \cL\text{ line bundle of degree }d.
        \end{aligned}        
      \end{equation}
      By \Cref{eq:degrk} (and \Cref{eq:brake2f}),
      \begin{equation*}
        \dim\Hom(\cE,\cL) = \braket{\cE\to \cL} = kd-(kd-1)=1
      \end{equation*}
      and hence
      \begin{equation*}
        \dim\Hom(\cE,\cE')
        =
        \dim\Hom(\cE,\cL^{\oplus 2})
        =
        2\dim\Hom(\cE,\cL) = 2.
      \end{equation*}
      Because all morphisms $\cE\to \cE'=\cL^{\oplus 2}$ must factor through one of the embeddings $\cL\subset \cL^{\oplus 2}$, there are no {\it epi}morphisms $\cE\xrightarrowdbl{} \cE'$.

      The same $\cE$ and $\cL$ illustrate the relevance of indecomposability to \Cref{th:maxnondeg}\Cref{item:th:maxnondeg:samerk} as well: morphisms $\cE\to \cE':=\cL^{\oplus k}$ (between equal-rank bundles, with the domain indecomposable) cannot be injective. 
    \end{enumerate}

    
  \item\label{item:res:pr:maxnondeg:post:fixindec} Item \Cref{item:res:pr:maxnondeg:post:indec} suggests the possible modes of failure for decomposable (though still semistable) bundles: in both examples the issue is that one of $\cF_{\leftrightarrow}$ ranges over a family, counterbalancing the dimension gap.
    \begin{itemize}[wide]
    \item In the first case, of morphisms $\cO^{k}\to \cO(p)$, the possibilities for $\cF_{\leftarrow}$ range over quotients $\cO^k\xrightarrowdbl{}\cO$ and the dimension of the space of such quotients is precisely $k=\dim\Hom(\cO^k\to \cO(p))$.

    \item Similarly, in the other case the $\cF_{\rightarrow}\lhook\joinrel\to \cL^2$ are the embeddings of the various copies of $\cL$; again, the space of such embeddings is $\left(2=\dim\Hom(\cE,\cL^2)\right)$-dimensional. 
    \end{itemize}
    The proof of \Cref{th:maxnondeg} will still go through just fine, though, if $\cE$ and $\cE'$ are semistable, each with only finitely many subbundles of equal slope. This is equivalent to their being {\it maximally asymmetric}, in the sense of having minimal-dimensional automorphism group given the charge $(k,n)$. Suppose
    \begin{equation*}
      \cE\cong \bigoplus_i\cE_i
      ,\quad
      \cE_i\text{ indecomposable}
      ,\quad
      \mu(\cE_i)=\mu:=\frac kn.
    \end{equation*}
    Per \Cref{re:selfext}, each $\cE_i$ is an $h_i$-fold self-extension of a single slope-$\mu$ stable bundle for $h_i:=\gcd(\rk\cE_i,\ \deg\cE_i)$. The minimal dimension achievable by $\Aut(\cE)$ is then easily seen to be the total length
    \begin{equation}\label{eq:lengthe}
      \mathrm{length}(\cE)
      =
      \bigoplus_i
      \mathrm{length}(\cE_i)
    \end{equation}
    in the abelian category \cite[Proposition 5.3.6]{lepot-vb} of slope-$\mu$ semistable bundles, and that equality is achieved precisely when the associated gradings of the $\cE_i$ have no common stable summands. In other words: $\cE_i$ is an $h_i$-fold self-extension of the stable $\cE_i'$, with all $\cE_i'$ mutually non-isomorphic. We record the resulting improvement on \Cref{th:maxnondeg} as \Cref{cor:th:maxnondeg:dec}.

  \item\label{item:res:pr:maxnondeg:post:basic} The maximally asymmetric slope-$\mu$ semistable bundles can also be characterized alternatively (assuming semistability and the fixed slope) as precisely those $\cE$ whose endomorphism algebra $\End(\cE)$ is {\it basic} \cite[pp.172-173]{sy_frob-1}. As we are working over an algebraically closed field, this is equivalent \cite[Proposition II.6.19]{sy_frob-1} to the simple $\End(\cE)$-modules being 1-dimensional. For this reason, we occasionally (and for brevity and variety) refer to maximally asymmetric bundles as {\it basic}.
  \end{enumerate}
\end{remarks}

\begin{corollary}\label{cor:th:maxnondeg:dec}
  The conclusion of \Cref{th:maxnondeg} holds also for basic semistable $\cE$ and $\cE'$.  \qedhere
\end{corollary}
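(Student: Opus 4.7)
The plan is to rerun the proof of \Cref{th:maxnondeg} with only cosmetic modifications, exploiting the observation already flagged in \Cref{res:pr:maxnondeg:post}\Cref{item:res:pr:maxnondeg:post:fixindec}. The skeleton of that argument---factoring each $f\in \Hom(\cE,\cE')$ through its image $\cF$, decomposing $\cF \cong \cF_{\leftarrow}\oplus \cF_{|}\oplus \cF_{\rightarrow}$ by HN slopes, and bounding the dimensional contribution of each summand---depends on $\cE$ and $\cE'$ only through their semistability (needed so that the HN slopes of $\cF$ are confined to $\{\mu(\cE)\}\cup(\mu(\cE),\mu(\cE'))\cup\{\mu(\cE')\}$). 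That portion therefore carries over verbatim.

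The two places where indecomposability was genuinely invoked are the analyses of $\cF_{\leftarrow}$ and $\cF_{\rightarrow}$. When $\cE \cong \bigoplus_i \tensor*[_{h_i}]{\cE_i'}{}$ is basic with pairwise non-isomorphic stable $\cE_i'$, the subobjects of $\cE$ in the abelian category of slope-$\mu(\cE)$ semistable bundles form a finite distributive lattice: any such quotient has the shape $\bigoplus_i \tensor*[_{g_i}]{\cE_i'}{}$ with $0 \le g_i \le h_i$. In particular $\cF_{\leftarrow}$ is itself automatically basic and ranges over finitely many isomorphism classes. A mirror analysis places $\cF_{\rightarrow}$ as a basic subbundle of $\cE'$, again in finitely many isomorphism classes.

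Next I would verify that the numerical bookkeeping in parts \Cref{item:th:maxnondeg:mid}, \Cref{item:pr:maxnondeg:=<} and \Cref{item:pr:maxnondeg:<=} still balances. The only place where indecomposability entered numerically was the cancellation $\dim\Hom(\cE,\cF_{\leftarrow}) = \dim\Aut(\cF_{\leftarrow})$, which reduced the contribution of $\cF_{\leftarrow}$ to just $\dim\Hom(\cF_{\leftarrow},\cE')$. Writing $\cF_{\leftarrow} = \bigoplus_i \tensor*[_{g_i}]{\cE_i'}{}$ and applying \Cref{eq:selfexts} summand-wise yields
\[
  \dim\Hom(\cE,\cF_{\leftarrow}) \;=\; \sum_i \min(h_i,g_i) \;=\; \sum_i g_i \;=\; \dim\Aut(\cF_{\leftarrow}),
\]
the last equality because $\cF_{\leftarrow}$ is basic. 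The symmetric identity holds for $\cF_{\rightarrow}$, and the middle summand $\cF_|$ was handled without any appeal to indecomposability in the first place. The bilinear collapse producing \Cref{eq:2nonnegterms} and the conclusion \Cref{eq:keyineq} are insensitive to whether $\cE$ and $\cE'$ are indecomposable or merely basic, so the forcing $\cF \cong \cE'$ (unequal-rank case) or $\rk\cF = \rk\cE = \rk\cE'$ (equal-rank case) comes out as before.

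The mildly delicate point---not really an obstacle, but worth double-checking---is precisely the claim that $\cF_{\leftarrow}$ and $\cF_{\rightarrow}$ inherit basicness from $\cE$ and $\cE'$ respectively. This reduces to observing that the Jordan--H\"older factors of any sub- or quotient object of a basic slope-$\mu$ semistable bundle are drawn (without new repetitions) from those of the parent. Once this is in hand, the openness of the relevant loci follows from \Cref{le:semistabopen}\Cref{item:le:semistabopen:ss} exactly as in the indecomposable proof, completing the corollary.
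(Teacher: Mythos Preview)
Your proposal is correct and follows exactly the route the paper intends: the corollary carries no proof of its own beyond the pointer to \Cref{res:pr:maxnondeg:post}\Cref{item:res:pr:maxnondeg:post:fixindec}, which observes that the argument of \Cref{th:maxnondeg} only used indecomposability to guarantee finitely many equal-slope sub- and quotient bundles, a property shared by basic semistable bundles. Your explicit verification that $\dim\Hom(\cE,\cF_{\leftarrow})=\dim\Aut(\cF_{\leftarrow})$ via \Cref{eq:selfexts} is a clean way to make the ``finitely many $\cF_{\leftarrow}$'' bookkeeping precise, and is equivalent to (if slightly more detailed than) what the paper gestures at.
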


As a bit of a side note, the example in \Cref{eq:item:res:pr:maxnondeg:post:indec:o2op} also extends in a different direction (beyond what \Cref{res:pr:maxnondeg:post}\Cref{item:res:pr:maxnondeg:post:indec} already observes): coproducts of arbitrary stable $\cS$ in place of $\cO^k$. The construction is based on the following remark. We write $\cO_p$ for the {\it skyscraper sheaf} \cite[Exercise II.1.17]{hrt} attached to a point $p\in E$. 

\begin{lemma}\label{le:sttwist}
  Let $\cS$ be a stable bundle and $p\in E$. The middle term attached to the universal extension
  \begin{equation}\label{eq:sbop}
    0\to
    \Ext^1(\cO_p,\cS)^*\otimes \cS
    \xrightarrow{\quad}
    \bullet
    \xrightarrow{\quad}
    \cO_p
    \to 0
  \end{equation}
  is indecomposable and hence also stable, of charge $(\rk^2\cS,\ \rk\cS\cdot \deg\cS+1)$. 
\end{lemma}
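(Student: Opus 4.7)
The plan is to realize $\bullet$ as the image of $\cO_p$ under the inverse Seidel--Thomas spherical twist $T_\cS^{-1}$, and to read indecomposability off from the autoequivalence property. On an elliptic curve the stable bundle $\cS$ is spherical in $D^b(E)$: $\End(\cS) = \Bbbk$ by stability, $\Ext^1(\cS, \cS) = \Bbbk$ by Serre duality (trivial canonical), and $\Ext^{\ge 2}$ vanishes because we are on a curve.

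The computation identifying $\bullet \cong T_\cS^{-1}(\cO_p)$ runs as follows. The defining triangle $T_\cS^{-1}(\cO_p) \to \cO_p \to \mathrm{RHom}(\cO_p, \cS)^* \otimes \cS \to [1]$ simplifies via $\mathrm{RHom}(\cO_p, \cS) = V[-1]$, writing $V := \Ext^1(\cO_p, \cS)$ and using $\Hom(\cO_p, \cS) = 0$. Dualizing and rotating produces a short exact sequence $0 \to V^* \otimes \cS \to T_\cS^{-1}(\cO_p) \to \cO_p \to 0$ (the cohomology LES of the triangle forces $T_\cS^{-1}(\cO_p)$ to live in degree 0, and absence of $\cO_p$-torsion promotes it to a vector bundle). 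The boundary $\cO_p \to V^* \otimes \cS[1]$ is by construction the canonical coevaluation, and therefore corresponds to $\id_V$ under $\Ext^1(\cO_p, V^* \otimes \cS) = V^* \otimes V$. That is exactly the universal extension class, so $T_\cS^{-1}(\cO_p) \cong \bullet$.

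Because autoequivalences of a triangulated category preserve indecomposability and $\cO_p$ is indecomposable in $D^b(E)$ (being simple in $\Coh(E)$), the object $\bullet$ is indecomposable. The charge $(\rk^2\cS,\ \rk\cS \cdot \deg\cS + 1)$ is immediate from the SES, and any prime dividing $\rk^2\cS$ divides $\rk\cS$ and hence $\rk\cS \cdot \deg\cS$, so cannot divide $\rk\cS \cdot \deg\cS + 1$: the rank and degree are coprime. Indecomposability together with coprime charge upgrades to stability by Atiyah's classification, delivering the full statement.

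The main obstacle is the bookkeeping pinning the boundary of the spherical triangle down to $\id_V$ — essentially formal in the spherical-twist literature, but requiring some care in tracking the Serre-duality identifications. Once it is in place, the rest is mechanical.
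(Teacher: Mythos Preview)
Your proof is correct and follows essentially the same route as the paper: both identify $\bullet$ with the image of $\cO_p$ under the (inverse) Seidel--Thomas twist along the spherical object $\cS$, and then read off indecomposability from the autoequivalence property. The only difference is presentational: the paper simply cites \cite[Definition~2.7 and Proposition~2.10]{st} for the identification $\bullet \cong T'_{\cS}(\cO_p)$ (their $T'_{\cS}$ being your $T_{\cS}^{-1}$) and argues sphericity of $\cS$ by transporting the spherical skyscraper via \cite[Theorem~14.7]{Polishchuk-book}, whereas you unpack the defining triangle by hand and check sphericity directly from Serre duality.
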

\begin{proof}
  The claim about the charge follows from
  \begin{equation*}
    \Ext^1(\cO_p,\cS)^*
    \cong
    \Hom(\cS,\cO_p)
    \quad\left(\text{Serre duality}\right),
  \end{equation*}
  a space of dimension $\rk\cS$. Semistable bundles of that charge must indeed be stable \cite[Corollary 14.8]{Polishchuk-book}, for the rank and degree components of the charge are coprime. 
  
  As for indecomposability, simply note that $\bullet$ is by its very definition the image of the indecomposable sheaf $\cO_p$ through the modified {\it twist functor} denoted by $T'_{\cS}$ in \cite[Definition 2.7]{st}. That endofunctor of the bounded derived category $D^b(E)$ of coherent sheaves on $E$ is an equivalence by \cite[Proposition 2.10]{st}: $\cS$ is stable and hence {\it (1-)spherical} in the sense of \cite[Definition 2.9]{st} (or \cite[Definition 8.1]{Huy-FM}). Indeed, skyscraper sheaves on curves are 1-spherical \cite[Example 8.10(i)]{Huy-FM}, and every stable bundle is mapped onto one such by some autoequivalence of $D^b(E)$ \cite[Theorem 14.7]{Polishchuk-book}. 
\end{proof}

The aforementioned expansion of \Cref{eq:item:res:pr:maxnondeg:post:indec:o2op} is now as follows. 

\begin{example}\label{ex:sttwist}
  Fix an arbitrary stable $\cS$ and a point $p\in E$ and let $\cE'$ be the (also stable, by \Cref{le:sttwist}) middle term $\bullet$ of \Cref{le:sttwist}. Morphisms $\cS\to \cE'$ all factor through the left-hand coproduct $\cS^{\rk\cS}\cong \Ext^1(\cO_p,\cS)^*\otimes \cS$ of \Cref{eq:sbop}, as evident from the fragment
  \begin{equation*}
    \begin{tikzpicture}[>=stealth,auto,baseline=(current  bounding  box.center)]
      \path[anchor=base] 
      (0,0) node (l0) {$0$}
      +(2,0) node (ll) {$\Hom(\cS,\cS^{\rk \cS})$}
      +(5,1) node (l) {$\Hom(\cS,\cE')$}
      +(8,1) node (r) {$\Hom(\cS,\cO_p)$}
      +(4,-2) node (rr) {$\Ext^1(\cS,\cS^{\rk \cS})$}
      +(10,0) node (r0) {$\Ext^1(\cS,\cE')\overset{\text{\Cref{eq:degrk}}}{\cong} 0$}
      ;
      \draw[->] (l0) to[bend right=6] node[pos=.5,auto] {$\scriptstyle $} (ll);
      \draw[right hook->] (ll) to[bend left=6] node[pos=.5,auto] {$\scriptstyle $} (l);
      \draw[->] (l) to[bend left=6] node[pos=.5,auto] {$\scriptstyle 0$} (r);
      \draw[-implies,double equal sign distance] (6.5,.3) to (6.5,1);
      \draw[->>] (r.east) .. controls +(3,0) and +(-3,0) .. node[pos=.5,auto,sloped] {$\scriptstyle \text{epimorphism of equidimensional spaces}$} node[pos=.5,auto,sloped,swap] {$\scriptstyle \text{hence $\cong$}$} (rr.west);
      \draw[->] (rr) to[bend right=6] node[pos=.5,auto] {$\scriptstyle $} (r0);
    \end{tikzpicture}
  \end{equation*}
  of the long exact cohomology sequence resulting from applying $\Hom(\cS,-)$ to \Cref{eq:sbop}. Consequently, there are no epic $\cE:=\cS^k\to \cE'$.
\end{example}

It is also perhaps worth highlighting which class of morphisms the proof of \Cref{th:maxnondeg} singles out as crucial in assessing the size of $\Hom_{max}(\cE,\cE')$. 

\begin{theorem}\label{th:muextr}
  Let $\cE$ and $\cE'$ be semistable bundles with $\mu(\cE)<\mu(\cE')$.

  One of the locally closed spaces $\Hom_{\im=\tau}(\cE,\cE')$ of \Cref{eq:homtau}, with either 
    \begin{equation}\label{eq:2muextr}
      \mu_{\min}(\tau)=\mu(\cE)
      \quad\text{or}\quad
      \mu_{\max}(\tau)=\mu(\cE'),
    \end{equation}
    has full dimension in $\Hom(\cE,\cE')$ and hence contains an open dense subset thereof. 
\end{theorem}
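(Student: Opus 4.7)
The plan is to exploit the locally closed stratification of $\Hom(\cE,\cE')$ into $\Hom_{\im=\tau}(\cE,\cE')$ provided by \Cref{pr:taulcl}\Cref{item:le:taulcl:tau} and reduce the theorem to a dimension estimate: every stratum with $\mu_{\min}(\tau) > \mu(\cE)$ \emph{and} $\mu_{\max}(\tau) < \mu(\cE')$ is of strictly smaller dimension than the ambient $\Hom(\cE,\cE')$. Only finitely many types enter: since $\cE,\cE'$ are semistable with $\mu(\cE)<\mu(\cE')$, the image of any $f$ is simultaneously a quotient of $\cE$ and a subsheaf of $\cE'$, which forces its HN slopes into $[\mu(\cE),\mu(\cE')]$ and its rank into $[0,\min(\rk\cE,\rk\cE')]$. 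Given the estimate, pigeonhole inside the irreducible affine space $\Hom(\cE,\cE')$ produces a stratum of full dimension whose type must then satisfy \Cref{eq:2muextr}; any locally closed subset of an irreducible scheme that achieves the full ambient dimension is dense, hence open.

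For the estimate, I would copy case \Cref{item:th:maxnondeg:mid} of the proof of \Cref{th:maxnondeg} almost verbatim. A morphism in $\Hom_{\im=\tau}$ factors as $\cE \twoheadrightarrow \cF \hookrightarrow \cE'$ with $\cF$ of type $\tau$; iso classes of $\cF$ within $\tau$ contribute at most one parameter per indecomposable HN summand (Atiyah's classification), while the scaling action $(\pi,\iota)\mapsto(\alpha\pi,\iota\alpha^{-1})$ of $\Aut(\cF)$ on composable pairs subtracts at least as much. The remaining dimension is bounded by
\begin{equation*}
  \dim\Hom(\cE,\cF) + \dim\Hom(\cF,\cE')
  =
  (\mu(\cF)-\mu(\cE))\,\rk\cE\,\rk\cF + (\mu(\cE')-\mu(\cF))\,\rk\cF\,\rk\cE',
\end{equation*}
the equality using \Cref{eq:degrk} in its generalized semistable form (all HN slopes of $\cF$ being strictly separated from both $\mu(\cE)$ and $\mu(\cE')$).

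Subtracting from $\dim\Hom(\cE,\cE') = \braket{\cE\to\cE'}$ (same formula, for the same reason) and regrouping produces
\begin{equation*}
  (\mu(\cF)-\mu(\cE))\,\rk\cE\,(\rk\cE'-\rk\cF) + (\mu(\cE')-\mu(\cF))\,\rk\cE'\,(\rk\cE-\rk\cF),
\end{equation*}
a sum of non-negative summands: $\mu(\cF)$ is a weighted average of strictly intermediate slopes, and $\cF$ embeds into $\cE'$ while surjecting from $\cE$. The one point I would flag as the main obstacle is confirming that at least one summand is strictly positive. This follows from ruling out $\rk\cF = \rk\cE$: equality would make $\cE \twoheadrightarrow \cF$ a surjection between equal-rank bundles with $\cE$ locally free, hence an isomorphism (the image $\cF$ is torsion-free in $\cE'$, so locally free on a smooth curve), forcing $\mu(\cF) = \mu(\cE)$ and contradicting $\mu_{\min}(\tau) > \mu(\cE)$. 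Thus $\rk\cE - \rk\cF > 0$, the second summand is strictly positive, and the estimate --- and hence the theorem --- is proven.
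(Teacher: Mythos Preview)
Your proposal is correct and follows essentially the same route as the paper: both reduce to the dimension count under heading \Cref{item:th:maxnondeg:mid} in the proof of \Cref{th:maxnondeg}, showing that strata whose image type has all HN slopes strictly between $\mu(\cE)$ and $\mu(\cE')$ are of deficient dimension. Your explicit handling of the strict-positivity step (ruling out $\rk\cF=\rk\cE$ via the equal-rank surjection argument) is a clean variant of the paper's analysis of when both terms of \Cref{eq:2nonnegterms} vanish.
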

\begin{proof}
  This is already covered by (the proof of) \Cref{th:maxnondeg}: the complement
  \begin{equation*}
    \Hom(\cE,\cE')
    \setminus
    \coprod_{\tau}
    \Hom_{\im=\tau}(\cE,\cE')
    \quad
    \left(\text{disjoint union, finite by \cite[\S 5.1, Example]{lepot-vb}}\right)
  \end{equation*}
  consists of morphisms factoring through \Cref{eq:fdec} with no extreme terms $\cF_{\leftrightarrow}$. The dimension count under heading \Cref{item:th:maxnondeg:mid} of the proof goes through to prove that that complement has dimension $<\dim\Hom(\cE,\cE')$. 
\end{proof}

And an immediate consequence:

\begin{corollary}\label{cor:homlrarrow}
  For semistable $\cE$ and $\cE'$ with $\mu(\cE)<\mu(\cE')$ the subspaces
  \begin{equation*}
    \begin{aligned}
      \Hom_{\leftrightarrow:extr}(\cE,\cE')
      &:=
        \Hom_{\leftarrow:extr}(\cE,\cE')\cup \Hom_{\rightarrow:extr}(\cE,\cE'),\\
      \Hom_{\leftarrow:extr}(\cE,\cE')
      &:=
        \left\{\cE\xrightarrow{f}\cE'\ |\ \mu_{min}(\im f)=\mu(\cE)\right\}
        \quad\text{and}\\
      \Hom_{\rightarrow:extr}(\cE,\cE')
      &:=
        \left\{\cE\xrightarrow{f}\cE'\ |\ \mu_{max}(\im f)=\mu(\cE')\right\}
    \end{aligned}
  \end{equation*}
  of $\Hom(\cE,\cE')$ are locally closed, and at least one of the latter two has complement of dimension $<\dim \Hom(\cE,\cE')$ and hence contains a dense open subset of $\Hom(\cE,\cE')$. 

  In particular, $\Hom_{\leftrightarrow:extr}(\cE,\cE')$ contains a dense open subset of $\Hom(\cE,\cE')$.  \qedhere
\end{corollary}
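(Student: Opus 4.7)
The plan is to derive the corollary essentially as a repackaging of \Cref{th:muextr} via the stratification of \Cref{pr:taulcl}. First I would observe that the HN type $\tau(\im f)$ takes only finitely many values as $f$ varies over $\Hom(\cE,\cE')$ (by the boundedness recorded in \cite[\S 5.1, Example]{lepot-vb}), so the two relevant sets decompose as finite unions of the strata of \Cref{pr:taulcl}:
\begin{equation*}
  \Hom_{\leftarrow:extr}(\cE,\cE')
  =
  \bigcup_{\tau:\ \mu_{\min}(\tau) = \mu(\cE)} \Hom_{\im=\tau}(\cE,\cE'),
  \quad
  \Hom_{\rightarrow:extr}(\cE,\cE')
  =
  \bigcup_{\tau:\ \mu_{\max}(\tau) = \mu(\cE')} \Hom_{\im=\tau}(\cE,\cE').
\end{equation*}

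Next, for the local-closure claim, I would proceed as follows. Since $\cE$ is semistable of slope $\mu(\cE)$ and $\im f$ is a quotient of $\cE$, one automatically has $\mu_{\min}(\im f)\ge \mu(\cE)$, so $\Hom_{\leftarrow:extr}$ is really the extremal locus $\{f\ :\ \mu_{\min}(\im f)\le \mu(\cE)\}$; dually $\Hom_{\rightarrow:extr}$ is cut out by $\mu_{\max}(\im f)\ge \mu(\cE')$. I would first stratify $\Hom(\cE,\cE')$ by the lower-semicontinuous function $\rk\im f$, and then within each such locally closed rank-stratum by $\deg\im f$, reaching a situation in which the images form a flat family of fixed charge. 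On each such stratum, Shatz semicontinuity of HN polygons shows that $\mu_{\min}(\im f)$ can only drop (and $\mu_{\max}(\im f)$ only rise) under specialization, so the desired extremal slope conditions cut out closed subsets. Gluing these across the finitely many $(\rk,\deg)$-strata yields local closure of $\Hom_{\leftarrow:extr}$ and $\Hom_{\rightarrow:extr}$ inside $\Hom(\cE,\cE')$, and $\Hom_{\leftrightarrow:extr}$ is then locally closed as their union.

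The dense-open assertion is then essentially a direct reading of \Cref{th:muextr}: that theorem supplies a single stratum $\Hom_{\im=\tau_0}$ of full dimension in $\Hom(\cE,\cE')$ with either $\mu_{\min}(\tau_0)=\mu(\cE)$ or $\mu_{\max}(\tau_0)=\mu(\cE')$, and such a stratum lies in $\Hom_{\leftarrow:extr}$ or $\Hom_{\rightarrow:extr}$ respectively. Being a full-dimensional locally closed subset of the irreducible affine space $\Hom(\cE,\cE')$, its closure is the whole space and it contains a dense open subset thereof; the larger $\Hom_{\leftrightarrow:extr}$ then inherits that dense open subset, and its complement has dimension strictly smaller than $\dim\Hom(\cE,\cE')$.

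I expect the local-closure step to be the main obstacle: finite unions of locally closed subsets are in general only constructible, so the argument really does need the interplay of the rank- and degree-stratifications of $\im f$, Shatz semicontinuity of HN polygons in flat sub-families, and the fact that the extremal slope conditions involved sit precisely at the ``boundary'' forced by the semistability of $\cE$ and $\cE'$. The dense-open conclusion itself is then a short formal consequence, and the ``in particular'' clause is automatic from the preceding.
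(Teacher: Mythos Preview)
Your approach for the density assertion is correct and matches the paper exactly: the paper treats this corollary as an immediate consequence of \Cref{th:muextr}, with no further argument, and your reading of that theorem (a full-dimensional stratum $\Hom_{\im=\tau_0}$ with $\tau_0$ satisfying one of \Cref{eq:2muextr}, hence sitting inside one of $\Hom_{\leftarrow:extr}$ or $\Hom_{\rightarrow:extr}$) is precisely what is intended. The ``in particular'' clause follows formally, as you say.

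On local closure you actually go further than the paper, which offers no argument at all. Your strategy is sound in outline: rewrite the extremal conditions as $\mu_{\min}(\im f)\le \mu(\cE)$ and $\mu_{\max}(\im f)\ge \mu(\cE')$ (the reverse inequalities being automatic by semistability), pass to flat subfamilies of images, and invoke Shatz semicontinuity to get closedness on each piece. The soft spot is the sentence ``gluing these across the finitely many $(\rk,\deg)$-strata yields local closure'': as you yourself note, a finite union of locally closed sets is only constructible in general, and the same caveat applies to your claim that $\Hom_{\leftrightarrow:extr}$ is locally closed ``as their union''. Your parenthetical about the extremal slope conditions sitting at the boundary forced by semistability is the right intuition for why these particular unions do not misbehave, but it is not yet an argument. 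One way to close the gap for, say, $\Hom_{\leftarrow:extr}$: show directly that its complement in $\{f\ne 0\}$ is open by observing (over an elliptic curve, where HN filtrations split) that $\mu_{\min}(\im f)>\mu(\cE)$ is governed by vanishing of $\Hom(\im f, S)$ for the finitely many stable $S$ occurring in $\gr\cE$, and then using upper semicontinuity of $\dim\Hom$ together with the rank stratification. The paper does not spell any of this out, so you are not missing something the paper provides; you have simply identified where the actual work hides.
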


\begin{notation}\label{not:tauextr}
  In the context of \Cref{th:muextr}, we write $\tau_{extr}:=\tau_{extr}(\cE\to \cE')$ (as in {\it extreme}) for the unique type $\tau$ with
  \begin{equation*}
    \Hom_{\im=\tau}(\cE,\cE')
    \subset
    \Hom(\cE,\cE')
  \end{equation*}
  full-dimensional.
\end{notation}

As evident from \Cref{res:pr:maxnondeg:post}\Cref{item:res:pr:maxnondeg:post:indec}, the relative size of the ranks is not sufficient to determine which of the possibilities in \Cref{eq:2muextr} obtains (if not both): in both cases $\rk\cE>\rk\cE'$, and each of the two options occurs in one instance. We do, however, have the following simple observation. 

\begin{corollary}\label{cor:whichextr}
  Let $\cE$ and $\cE'$ be semistable bundles with $\mu(\cE)<\mu(\cE')$.

  If $\cE$ ($\cE'$) is maximally asymmetric in the sense of \Cref{res:pr:maxnondeg:post}\Cref{item:res:pr:maxnondeg:post:fixindec} then either
  \begin{itemize}[wide]
  \item $\Hom_{max}(\cE,\cE')$ (when $\rk\cE\ne \rk\cE'$) or $\Hom_{\lhook\joinrel\to}(\cE,\cE')$ (when $\rk\cE=\rk\cE'$) is non-empty (equivalently, open dense), in which case the type $\tau_{extr}$ of \Cref{not:tauextr} is semistable of the same slope as the bundle of smaller rank between $\cE$ and $\cE'$;

  \item or
    \begin{equation*}
      \mu_{max}(\tau_{extr}) = \mu(\cE')
      \quad
      \left(\mu_{min}(\tau_{extr})=\mu(\cE)\text{ respectively}\right).
    \end{equation*}
  \end{itemize}  
\end{corollary}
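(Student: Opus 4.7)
The plan is to start from \Cref{th:muextr}, which already supplies the dichotomy $\mu_{\min}(\tau_{extr})=\mu(\cE)$ or $\mu_{\max}(\tau_{extr})=\mu(\cE')$, and then refine it using the basic hypothesis. I will work out the case $\cE$ basic; the case of $\cE'$ basic is entirely symmetric, e.g.\ by dualizing $\bullet\mapsto \bullet^*$ as in the proof of \Cref{th:maxnondeg}. If $\mu_{\max}(\tau_{extr})=\mu(\cE')$ we are already in the second bullet of the statement, so the remaining work concerns the complementary case $\mu_{\max}(\tau_{extr})<\mu(\cE')$, in which \Cref{th:muextr} additionally forces $\mu_{\min}(\tau_{extr})=\mu(\cE)$.

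In this complementary case I would rerun the dimension count from the proof of \Cref{th:maxnondeg} for the generic image $\cF$ of type $\tau_{extr}$, with its HN decomposition $\cF=\cF_{\leftarrow}\oplus \cF_{|}\oplus \cF_{\rightarrow}$. Two simplifications appear: the standing assumption forces $\cF_{\rightarrow}=0$, neutralising the one summand whose dimension control in that proof genuinely needed $\cE'$ to be indecomposable; and basicness of $\cE$ (as in \Cref{cor:th:maxnondeg:dec} and \Cref{res:pr:maxnondeg:post}\Cref{item:res:pr:maxnondeg:post:fixindec}) suffices to prevent the $\cF_{\leftarrow}$ contribution from picking up a family-dimension bonus. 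The $\cF_{|}$ estimate from heading \textbf{(I)} of that proof carries over unchanged.

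With the bookkeeping in place, the same bilinear manipulation used in the proof of \Cref{th:maxnondeg} reduces the full-dimensionality of $\Hom_{\im=\tau_{extr}}(\cE,\cE')$ to the vanishing of the two non-negative summands in \Cref{eq:2nonnegterms}. The assumption $\mu_{\max}(\tau_{extr})<\mu(\cE')$ makes $\mu(\cE')-\mu(\cF)$ strictly positive, so that the second summand of \Cref{eq:2nonnegterms} forces $\rk\cF=\rk\cE$. A surjection of torsion-free sheaves of equal rank is an isomorphism, whence $\cF\cong\cE$; the generic morphism is then injective with image isomorphic to $\cE$, giving $\rk\cE\le \rk\cE'$ and $\tau_{extr}=\tau(\cE)$ semistable of slope $\mu(\cE)$ --- the slope of the smaller-rank bundle, as the first bullet demands.

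The main obstacle I anticipate is verifying that the $\cF_{\leftarrow}$ contribution in the proof of \Cref{th:maxnondeg} really does remain bounded by $\dim\Hom(\cF_{\leftarrow},\cE')$ under the weaker hypothesis that $\cE$ is basic (rather than indecomposable), and then splicing the substitution $\cF_{\rightarrow}=0$ into the sums \Cref{eq:keyineq}--\Cref{eq:2nonnegterms} at the correct point --- essentially a careful reorganisation of the existing dimension estimate rather than a genuinely new input.
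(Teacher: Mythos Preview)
Your proposal is correct and follows essentially the same route as the paper's proof: both refer back to the dimension count in the proof of \Cref{th:maxnondeg} (headings \Cref{item:th:maxnondeg:mid}--\Cref{item:pr:maxnondeg:<=}), observing that basicness of $\cE$ keeps the $\cF_{\leftarrow}$ contribution under control while the assumption $\mu_{\max}(\tau_{extr})<\mu(\cE')$ eliminates $\cF_{\rightarrow}$ entirely, so that the vanishing of \Cref{eq:2nonnegterms} forces $\cF\cong\cE$ and hence the first bullet. The paper's version is terser (it simply asserts that the ``no $\cF_{\rightarrow}$'' locus is sub-full-dimensional outside the maximally non-degenerate stratum), while you spell out the endgame via \Cref{eq:2nonnegterms} explicitly; the anticipated ``obstacle'' you flag is exactly what the paper handles by invoking \Cref{res:pr:maxnondeg:post}\Cref{item:res:pr:maxnondeg:post:fixindec}.
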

\begin{proof}
  Once more, this is effectively verified in the course of proving \Cref{th:maxnondeg}, under labels \Cref{item:pr:maxnondeg:=<} and/or \Cref{item:pr:maxnondeg:<=}: if $\cE$ (say) is maximally asymmetric then it has only finitely many slope-$\mu(\cE)$ quotient bundles and subbundles, so the space of morphisms $\cE\to \cE'$ factoring through \Cref{eq:fdec} with no right-hand extreme term $\cF_{\rightarrow}$ is strictly less than full-dimensional. 
\end{proof}

The following observation answers the natural follow-up question to \Cref{le:semistabopen}\Cref{item:le:semistabopen:ss} in one particular case. Recalling from \Cref{res:pr:maxnondeg:post}\Cref{item:res:pr:maxnondeg:post:fixindec} that basic semistable bundles are those whose automorphism group has minimal dimension ($=$ length of the bundle in the abelian category of semistable bundles of the given slope), we have:

\begin{proposition}\label{pr:kcofstab}
  Let $\cE$ and $\cE'$ be unequal-rank semistable bundles, with the lower-rank stable, the higher-rank basic, and $\mu(\cE)<\mu(\cE')$. The locus $\Hom_{max\mid ss}(\cE,\cE')$ of \Cref{eq:hommaxss} is then open dense in $\Hom(\cE,\cE')$. 
\end{proposition}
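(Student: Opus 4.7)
The plan is to reduce to a single case via duality, deduce openness from results already established in the paper, and prove density by stratifying $\Hom_{max}$ according to the Harder--Narasimhan type of the (co)kernel and bounding dimensions in the spirit of the proof of \Cref{th:maxnondeg}.

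Dualization ($\cE\leftrightarrow\cE^*$, $\cE'\leftrightarrow\cE'^*$) preserves stability and the basic property, swaps monomorphisms-with-torsion-free-cokernel with epimorphisms, and reverses the slope ordering, so it interchanges the sub-cases $\rk\cE>\rk\cE'$ and $\rk\cE<\rk\cE'$. I focus on the latter: $\cE$ stable, $\cE'$ basic semistable, $\mu(\cE)<\mu(\cE')$, and $\cat{kc}(f)$ then the cokernel, of charge $(r'-r,d'-d)$ and slope $>\mu(\cE')$. Openness of $\Hom_{max\mid ss}$ is part of \Cref{le:semistabopen}\Cref{item:le:semistabopen:ss}, and by \Cref{cor:th:maxnondeg:dec} the ambient $\Hom_{max}(\cE,\cE')$ is already open dense in the irreducible affine $\Hom(\cE,\cE')$ of dimension $d'r-dr'$; it remains to show that the locus in $\Hom_{max}$ where $\cat{kc}(f)$ is not semistable has dimension strictly less than $d'r-dr'$.

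By \Cref{le:semistabopen}\Cref{item:le:semistabopen:type} this locus decomposes as a finite union of locally closed strata $S_\tau$ indexed by HN types $\tau=((r_1,d_1),\ldots,(r_s,d_s))$ with $s\ge 2$. Each $f\in S_\tau$ canonically factors as $\cE\hookrightarrow\cF\hookrightarrow\cE'$, where $\cF\subset\cE'$ is the pullback of the top HN piece $\cG_1\subset\cat{kc}(f)$; $\cF$ has charge $(r+r_1,d+d_1)$ and fits in $0\to\cE\to\cF\to\cG_1\to 0$. This extension must be non-split: otherwise $\cG_1$ would embed saturatedly into $\cE'$ with $\mu(\cG_1)>\mu(\cE')$, contradicting the semistability of $\cE'$. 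I would then parametrize $S_\tau$ by triples $(\cF,\ \cE_0\subset\cF,\ \phi\colon\cE\xrightarrow{\sim}\cE_0)$ (with $\phi$ in a $\Bbbk^\times$-torsor since $\cE$ is stable), bounding each factor via the Quot-scheme tangent spaces $\Hom(\cF,\cE'/\cF)$ and $\Hom(\cE,\cG_1)$, computed through the LES's attached to $0\to\cE\to\cF\to\cG_1\to 0$ and to the HN splitting of $\cE'/\cF$ (smoothness of the relevant Quot schemes coming from the $\Ext^1$-vanishings supplied by \Cref{eq:degrk}).

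The main obstacle is converting the naive tangent-space estimates (which yield only $\dim S_\tau\le d'r-dr'$) into the required strict inequality. I expect the missing dimension of slack to come from the non-splitness of the extension defining $\cF$: the class $\xi\in\Ext^1(\cG_1,\cE)$ is non-zero, so Yoneda multiplication with $\xi$ produces a non-trivial connecting map in the LES computing $\Hom(\cF,\cE'/\cF)$, shaving at least one dimension off the naive bound. Basic-ness of $\cE'$ enters through the constraint $\dim\End(\cE')=\mathrm{length}(\cE')$ in the slope-$\mu(\cE')$ abelian category (cf.\ \Cref{res:pr:maxnondeg:post}\Cref{item:res:pr:maxnondeg:post:fixindec}), which supplies the automorphism corrections needed in the Quot-scheme parametrization and is what makes the estimate uniform across all non-semistable $\tau$ simultaneously.
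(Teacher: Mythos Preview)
Your reduction via duality, the openness argument, and the stratification by HN type of the cokernel are all correct and match the paper. The gap you yourself flag, however, is real: your parametrization via the intermediate bundle $\cF$ (the preimage of the top HN piece $\cG_1$) and Quot tangent spaces does lead only to the non-strict bound $\dim S_\tau\le\braket{\cE\to\cE'}$, and the ``connecting map shaves one dimension'' heuristic is not carried out. More importantly, it is not the mechanism the paper uses, and you have mislocated where the hypotheses enter: basic-ness of $\cE'$ is used \emph{only} to invoke \Cref{cor:th:maxnondeg:dec} (so that $\Hom_{max}$ is already dense); it plays no role in the dimension count. What \emph{is} used there is the stability of $\cE$, which gives $\Aut(\cE)\cong\Bbbk^{\times}$.

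The paper avoids your obstacle by parametrizing not via $\cF$ but directly via the full cokernel $\cQ$. An embedding $\cE\hookrightarrow\cE'$ modulo $\Aut(\cE)\cong\Bbbk^\times$ is the same datum as a subsheaf, i.e.\ an epimorphism $\cE'\twoheadrightarrow\cQ$ modulo $\Aut(\cQ)$. For $\cQ$ of a fixed unstable HN type with $s\ge 2$ summands one now counts three things: (i) $\dim\Hom(\cE',\cQ)=\braket{\cE'\to\cQ}$; (ii) the moduli of such $\cQ$, of dimension $s-1$ (each summand parametrized by a copy of $E$, minus one for the determinant constraint $\det\cQ\cong\det\cE'\otimes(\det\cE)^{-1}$); (iii) $\dim\Aut(\cQ)\ge s+1$ (the $s$ scalings of the summands, plus at least one non-zero morphism between summands of strictly different slopes). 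This gives
\[
\dim\bigl(\text{embeddings of type }\tau\bigr)/\Bbbk^\times
\ \le\ \braket{\cE'\to\cQ}+(s-1)-(s+1)
\ =\ \braket{\cE\to\cE'}-2,
\]
strictly below $\braket{\cE\to\cE'}-1=\dim\Hom_{max}(\cE,\cE')/\Bbbk^\times$. The strict inequality thus comes for free from the discrepancy $(s+1)-(s-1)=2$ between automorphisms and moduli of the \emph{whole} cokernel, not from any connecting-map argument on a single HN piece.
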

\begin{proof}
  We assume $\rk\cE<\rk\cE'$ to fix ideas. Given \Cref{cor:th:maxnondeg:dec} and \Cref{le:semistabopen}\Cref{item:le:semistabopen:ss}, the claim reduces to showing that the cokernel of at least {\it one} embedding $\cE\lhook\joinrel\to\cE'$ is semistable. This will be a dimension count, very much along the same lines as the proof of \cite[Theorem 2.4, backward implication, heading (II)]{ch_sympl-qnk_xv1}.

  The space of embeddings $\cE\lhook\joinrel\to \cE'$ modulo $\Aut(\cE)\cong \Bbbk^{\times}$ (this is where {\it stability} is used) has dimension $\braket{\cE\to \cE'}-1$, and each such embedding arises as the kernel of some epimorphism $\cE'\xrightarrowdbl{}\cQ$. We argue that the space of such kernels (i.e. epimorphisms modulo $\Aut(\cQ)$) for $\cQ$ ranging over unstable types
  \begin{equation}\label{eq:type}
    ((k_i,n_i))_{i=1}^s
    \quad\left(\text{unstable meaning }s\ge 2\right)
  \end{equation}
  has strictly smaller dimension.

  There are finitely many types to consider (the family of quotients of a given bundle is {\it bounded}: \cite[\S 5.1, Example on p.72]{lepot-vb}), so it suffices to consider a single type \Cref{eq:type} fixed throughout the rest of the proof (the embeddings $\cE\lhook\joinrel\to\cE'$ yielding that type is then a scheme as in \Cref{re:strataareschemes}).

  The parameter space for the $s$ HN summands of
  \begin{equation*}
    \cQ\cong \bigoplus_{i=1}^s \cQ_i
    ,\quad
    \cQ_i\text{ semistable}
    ,\quad
    \mu(\cQ_i)\text{ decreasing}
  \end{equation*}
  is $(s-1)$-dimensional: each indecomposable bundle of fixed degree and rank is parametrized by $E$ \cite[Theorem 10]{Atiyah}, and there is a total determinant constraint
  \begin{equation*}
    \prod_i \det\cQ_i
    \cong
    \det\cQ
    \cong \det\cE'\otimes (\det\cE)^{-1}. 
  \end{equation*}
  On the other hand, $\Aut(\cQ)$ is at least $(s+1)$-dimensional: scaling of each summand and non-zero morphisms from smaller to (strictly) larger slopes. In summary, the dimension (modulo scaling) of the space of embeddings $\cE\lhook\joinrel\cE'$ resulting in quotients of type \Cref{eq:type} is
  \begin{equation*}
    \le
    \braket{\cE'\to \cQ} + (s-1) - (s+1)
    =
    \braket{\cE'\to \cQ} -2
    =
    \braket{\cE\to \cE'}-2,
  \end{equation*}
  the last equality (a simple computation) being valid whenever there is a short exact sequence $0\to \cE\to \cE'\to \cQ\to 0$. This is indeed smaller than
  \begin{equation*}
    \dim\Hom_{max}(\cE,\cE')/\Bbbk^{\times}
    =
    \braket{\cE\to\cE'}-1,
  \end{equation*}
  concluding. 
\end{proof}

Along the same circle of ideas as \cite[Theorem 2.4]{ch_sympl-qnk_xv1} and \Cref{th:maxnondeg}, we have the following consequent variant on extensions consisting of bundles of prescribed isomorphism class.

\begin{corollary}\label{cor:stabmidstab}
  Let $\cK$, $\cE$ and $\cF$ be bundles, with slopes, ranks and determinants satisfying \Cref{eq:slopesrksdets} (so that $\zeta(\cE)=\zeta(\cK)+\zeta(\cF)$).
  
  If $\cK$ and $\cF$ are stable and $\cE$ basic then
  \begin{itemize}[wide]
  \item the space of embeddings $\cK\lhook\joinrel\to \cE$ fitting into an exact sequence \Cref{eq:kef}.
    \begin{equation*}
      0\to \cK\lhook\joinrel\xrightarrow{\quad} \cE\xrightarrowdbl{\quad} \cF\to 0
    \end{equation*}
    is open dense in $\Hom(\cK,\cE)$;

  \item and dually, the space of epimorphisms $\cE\xrightarrowdbl{}\cF$ that can be completed to extensions \Cref{eq:kef} is open dense in $\Hom(\cE,\cF)$. 
  \end{itemize}
  In particular, extensions \Cref{eq:kef} exist.
\end{corollary}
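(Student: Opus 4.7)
The plan is to reduce the corollary to \Cref{pr:kcofstab} together with the Atiyah/Tu rigidity of stable bundles on elliptic curves. The first step is to extract, from $\zeta(\cE)=\zeta(\cK)+\zeta(\cF)$ and $\mu(\cK)<\mu(\cF)$, the mediant inequality $\mu(\cK)<\mu(\cE)<\mu(\cF)$; in particular $\rk\cK<\rk\cE$ and $\rk\cF<\rk\cE$, so each of the pairs $(\cK,\cE)$ and $(\cE,\cF)$ fits the ``lower-rank stable, higher-rank basic, strictly-increasing slope'' hypothesis of \Cref{pr:kcofstab}.

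For the first bullet, applying \Cref{pr:kcofstab} to $(\cK,\cE)$ furnishes an open dense locus $\Hom_{max\mid ss}(\cK,\cE)\subseteq\Hom(\cK,\cE)$ of embeddings $\cK\lhook\joinrel\to\cE$ whose torsion-free cokernel $\cQ$ is semistable. What remains is to upgrade ``semistable'' to ``isomorphic to $\cF$''. Charge additivity gives $\zeta(\cQ)=\zeta(\cF)$, and the determinant hypothesis gives $\det\cQ\cong\det\cE\otimes(\det\cK)^{-1}\cong\det\cF$. Since $\cF$ is stable, \Cref{re:selfext} forces $\gcd(\rk\cF,\deg\cF)=1$, so any semistable bundle of that charge is itself stable (its self-extension index $h$ collapses to $1$). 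Two stable bundles on $E$ of matching charge and determinant are isomorphic by \cite[Lemma 30]{tu}, whence $\cQ\cong\cF$.

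The second bullet is entirely dual: apply \Cref{pr:kcofstab} to $(\cE,\cF)$ (now $\cF$ is the lower-rank stable bundle and $\cE$ the higher-rank basic one) to obtain an open dense locus of epimorphisms $\cE\xrightarrowdbl{}\cF$ with semistable kernel, then pin that kernel to $\cK$ by the same charge/determinant/coprimality comparison. The final ``in particular'' is immediate from either of the two open dense sets being non-empty.

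I anticipate no real obstacle. The corollary is essentially \Cref{pr:kcofstab} plus a rigidity statement for stable bundles on an elliptic curve; the only step demanding even a moment's care is the upgrade from ``semistable of matching charge'' to ``stable and isomorphic to the prescribed bundle'', and that rests entirely on the coprimality of rank and degree for stable indecomposables on $E$ recorded in \Cref{re:selfext}.
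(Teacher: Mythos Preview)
Your proposal is correct and follows essentially the same approach as the paper: reduce to \Cref{pr:kcofstab} to obtain an open dense locus of maximally non-degenerate morphisms with semistable $\cat{kc}$, then identify that semistable (co)kernel with the prescribed stable bundle via the coprimality $\gcd(\rk,\deg)=1$ and the uniqueness of stable bundles of given rank and determinant. The paper's proof is more compressed (citing \cite[Corollary to Theorem 7]{Atiyah} rather than \cite[Lemma 30]{tu} for the rigidity step, and dispatching the second bullet as the formal dual of the first), but your added detail on the mediant inequality and the explicit verification of the hypotheses of \Cref{pr:kcofstab} for both pairs is sound and in the same spirit.
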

\begin{proof}
  The two bulleted claims are mutual duals (as stated), and the last clearly follows. As for the first claim, it is a particular instance of \Cref{pr:kcofstab} (applied to $\cK\lhook\joinrel\to \cE$ in place of $\cE\lhook\joinrel\to \cE'$): the stability of the quotient $\cE/\cK$ identifies it with $\cF$, for stable bundles are uniquely determined by their ranks and determinants \cite[Corollary to Theorem 7]{Atiyah}.
\end{proof}


\begin{remark}\label{re:polish.th.14.10}
  As a particular case of \Cref{cor:stabmidstab} we recover the fact \cite[Theorem 14.10]{Polishchuk-book} that whenever $\dim\Ext^1(\cF,\cK)=1$ (for stable $\cF$ and $\cK$), so that all middle terms $\cE$ fitting into non-split extensions \Cref{eq:kef} are isomorphic, those middle terms are stable: we know from \Cref{cor:stabmidstab} that all indecomposable $\cE$ of determinant $\det\cK\otimes \det\cF$ feature in such exact sequences, and there are precisely $\gcd(\rk\cE,\deg\cE)$ isomorphism classes of such indecomposable bundles \cite[Theorem 10]{Atiyah}.
\end{remark}

The proof of \Cref{pr:kcofstab} can be leveraged into more than that statement.

\begin{theorem}\label{th:asym}
  Let $\cE$ and $\cE'$ be unequal-rank basic semistable bundles with $\mu(\cE)<\mu(\cE')$. The subspace
  \begin{equation*}
    \Hom_{max\mid ss\mid asym}(\cE,\cE')
    :=
    \left\{f\in \Hom_{max}(\cE,\cE')\ |\ \cat{kc}(f)\text{ semistable basic}\right\}
  \end{equation*}
  of $\Hom(\cE,\cE')$ is open dense. 
\end{theorem}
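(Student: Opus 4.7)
The plan is to extend the dimension count in the proof of \Cref{pr:kcofstab}. As there, duality reduces to the case $\rk\cE<\rk\cE'$, so that $\cat{kc}(f)=\mathrm{coker}(f)$. Openness of $\Hom_{\max\mid ss\mid asym}(\cE,\cE')$ in $\Hom(\cE,\cE')$ is essentially formal: \Cref{le:semistabopen}\Cref{item:le:semistabopen:ss} supplies maximal non-degeneracy and semistability of the cokernel, while ``basic'' coincides (per \Cref{res:pr:maxnondeg:post}\Cref{item:res:pr:maxnondeg:post:fixindec}) with the minimum value of the upper-semicontinuous function $f\mapsto\dim\End(\mathrm{coker} f)$ on the semistable-cokernel stratum, a numerical minimum read off from the charge.

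By \Cref{cor:th:maxnondeg:dec} the ambient $\Hom_{\max}(\cE,\cE')$ is open dense in $\Hom(\cE,\cE')$, so density reduces to a dimension count showing that the ``bad'' locus (of $f$ with non-semistable or semistable-but-non-basic cokernel) has dimension strictly below $\braket{\cE\to\cE'}$. I would stratify by the isomorphism class of $\mathrm{coker}(f)=\cQ$ and use the parametrization $\{f:\mathrm{coker}(f)\cong\cQ\}\cong\{(g,\phi)\mid g:\cE'\twoheadrightarrow\cQ,\ \phi:\cE\xrightarrow{\sim}\ker g\}/\Aut(\cQ)$, giving the per-$\cQ$ dimension bound $\dim\Hom(\cE',\cQ)-\dim\End(\cQ)+\dim\End(\cE)$, summed over the moduli $M_T$ of iso classes of $\cQ$ of a fixed type $T$. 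For the semistable-but-non-basic strata, $\dim M_T$ matches that of the basic analog of the same ``stable-support type'' (both are parametrized by the common set of distinct stable subquotients of $\gr\cQ$, with the shared determinant constraint), but $\dim\End(\cQ)$ strictly exceeds the basic value by \Cref{eq:selfexts}, yielding a strict deficit. For the non-semistable strata with HN decomposition $\cQ\cong\oplus_{i=1}^s\cQ_i$ ($s\ge 2$, slopes strictly decreasing), I would sharpen the $\dim\End(\cQ)\ge s+1$ estimate of \Cref{pr:kcofstab} to
\[
\dim\End(\cQ)\ \ge\ \sum_{i=1}^s h_i\ +\ \sum_{j<i}\braket{\cQ_i\to \cQ_j},\qquad h_i:=\gcd(\rk\cQ_i,\deg\cQ_i),
\]
using the strictly positive cross-slope $\Hom$-contributions of \Cref{eq:degrk}, paired with the matched $\dim M_T\le \sum_i h_i-1$.

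The main obstacle is closing the gap that opens because $\cE$ is only basic, not stable: in \Cref{pr:kcofstab} the equality $\dim\End(\cE)=1$ let the argument go through with slack $1$, but here $\dim\End(\cE)=\mathrm{length}(\cE)$ can be arbitrarily large. Two complementary tools should suffice: the sharpened cross-slope bound on $\dim\End(\cQ)$ above, and a tightening of the per-$\cQ$ count that imposes $\ker g\cong\cE$ explicitly rather than merely fixing its charge---the generic isomorphism class of $\ker g$ inside $\Hom_{\twoheadrightarrow}(\cE',\cQ)/\Aut(\cQ)$ varies in a family whose dimension is controlled by the length of $\cE$ in the slope-$\mu(\cE)$ abelian subcategory, precisely counterbalancing the extra $+\dim\End(\cE)$ in the bookkeeping.
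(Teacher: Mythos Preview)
Your openness argument and reduction to a dimension count are fine, and the stratification by $\cQ$ is a reasonable organizing principle. The problem is the last paragraph, where the entire difficulty lies. You correctly identify that the extra $+\dim\End(\cE)=\ell_{ss}(\cE)$ in the bookkeeping is what prevents the \Cref{pr:kcofstab} argument from going through verbatim, but your proposed fix---that imposing $\ker g\cong\cE$ (as a specific isomorphism class, not just a charge) cuts the dimension of $\Hom_{\twoheadrightarrow}(\cE',\cQ)/\Aut(\cQ)$ by exactly $\ell_{ss}(\cE)-1$---is asserted rather than argued. The map $g\mapsto[\gr\ker g]$ into the moduli space $M_{\det\cE}(\rk\cE,\deg\cE)\cong\bP^{\ell_{ss}(\cE)-1}$ gives you a \emph{lower} bound on the fiber dimension when it is dominant, not the upper bound you need; and to know it is dominant for every bad $\cQ$ (including non-basic ones) you would already need something close to the theorem, now with $\cQ$ in the role of the smaller-rank bundle. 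There is a circularity lurking here that your sketch does not break.

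The paper proceeds by a completely different route that sidesteps this issue: it does \emph{not} try to tighten the per-$\cQ$ count, but instead inducts on the structure of $\cE$ itself. Writing an indecomposable $\cE\cong{}_{h}\cK$ as an extension $0\to{}_{h-1}\cK\hookrightarrow\cE\twoheadrightarrow\cK\to 0$, a generic embedding $\iota:{}_{h-1}\cK\hookrightarrow\cE'$ is rigidifying by the inductive hypothesis, and a generic $\cK\hookrightarrow\cE'/\iota({}_{h-1}\cK)$ is rigidifying by the stable base case. The point is that $\dim\Ext^1(\cK,{}_{h-1}\cK)=1$ (\Cref{eq:selfexts}), so the pulled-back extension is automatically isomorphic to the original non-split one, and hence glues back to an embedding of $\cE$ with the desired cokernel. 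The passage from indecomposable to general basic $\cE=\bigoplus_i\cE_i$ is then easy, since $\Ext^1(\cE_i,\cE_j)=0$ for $i\ne j$. This constructive approach never needs to control, for a \emph{fixed} bad $\cQ$, how many epimorphisms $\cE'\twoheadrightarrow\cQ$ happen to have kernel isomorphic to the specific bundle $\cE$---which is exactly the uncontrolled quantity in your sketch.
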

\begin{proof}
  We assume once more that $\rk\cE<\rk\cE'$ for definiteness, so that $\cat{kc}=\mathrm{coker}$, and abbreviate $\Hom_{\bullet}(\cE,\cE')$ to $\bH_{\bullet}$ (as in \Cref{eq:bh}). 

  \begin{enumerate}[(I),wide]
  \item\label{item:th:asym:stab} {\bf Stable small-rank bundle.} The issue is openness; that settled, the dimension count in the proof of \Cref{pr:kcofstab} in fact yields the claim: cokernels
    \begin{equation*}
      \cQ\cong \bigoplus_{i=1}^s\cQ_i
      ,\quad
      \cQ_i\text{ indecomposable}
      ,\quad
      \mu(\cQ_i)\text{ all equal}
    \end{equation*}
    of morphisms $f\in \bH_{max\mid ss}$ are parametrized by an $(s-1)$-dimensional scheme (again, one copy of $E$ parametrizing each $\cQ_i$ and a determinant constraint). When $f$ lies outside $\bH_{max\mid ss\mid asym}$ the automorphism group of $\cQ$ has dimension $\ge s+1$, so the previous dimension count does indeed go through. 

    To address openness, note that the cokernel of the universal morphism
    \begin{equation*}
      \cE_{\bH_{max}}
      \xrightarrow{\quad}
      \cE'_{\bH_{max}}
      ,\quad
      \cE_{\bullet}:=\text{pullback to }\bullet\times \cE
      \quad\text{(cf. \Cref{eq:univmor})}
    \end{equation*}
    is a rank-$(\rk\cE'-\rk\cE)$ bundle $\cQ_{\bH_{max}}$, and its corresponding general linear bundle $GL(\cQ_{\bH_{max}})$ on the projective $\bH_{max}$-scheme $\bH_{max}\times E$ is flat over $\bH_{max}$, so the {\it semicontinuity theorem} \cite[Theorem III.12.8]{hrt} applies and the locus
    \begin{equation*}
      \bH_{max\mid ss\mid asym}
      =
      \left\{f\in \bH_{max}\ |\ \dim H^0(GL(\cQ_{\bH_{max}}))\text{ minimal}\right\}
      \subset
      \bH_{max}
    \end{equation*}
    is indeed open.

  \item\label{item:th:asym:indec} {\bf Indecomposable small-rank bundle.} The preceding step, assuming $\cE$ stable, was the base case of an induction on the length $h=\gcd(\rk\cE,\deg\cE)$ in the abelian category of semistable sheaves of slope $\mu(\cE)$. We now  illustrate the induction step at $h=2$, whereupon it will be apparent to the reader how to proceed. The assumption throughout, then, is that $\cE$ is a non-split self-extension of some stable $\cK$:
    \begin{equation}\label{eq:kek}
      0\to \cK\lhook\joinrel\xrightarrow{\quad} \cE\xrightarrowdbl{\quad} \cK\to 0.
    \end{equation}
    For brevity, maximally non-degenerate morphisms with semistable maximally rigid $\cat{KC}$ will be termed {\it rigidifying} (we will mostly be concerned with rigidifying embeddings). 
    
    We know from step \Cref{item:th:asym:stab} that most (i.e. an open dense set) morphisms $\cK\to \cE'$ are rigidifying. The map
    \begin{equation*}
      \Hom(\cE,\cE')
      \xrightarrow{\quad\text{restrict}\quad}
      \Hom(\cK,\cE')
    \end{equation*}
    being onto (by the long exact cohomology sequence), most (again, open dense) morphisms $\cE\to \cE'$ will restrict to rigidifying morphisms $\cK\to \cE'$. Consider one such embedding $\cE\lhook\joinrel\xrightarrow{\iota_{\cE}}\cE'$, restricting to rigidifying $\cK\lhook\joinrel\xrightarrow{\iota_{\cK}}\cE'$. It induces a morphism
    \begin{equation}\label{eq:k2e'modk}
      \cE/\cK\cong \cK
      \lhook\joinrel\xrightarrow{\quad\iota'\quad}
      \cE'/\iota_{\cK}(\cK)
    \end{equation}
    which we know (from its very construction) pulls back the extension
    \begin{equation}\label{eq:ike'e'}
      0\to
      \iota_{\cK}(\cK)
      \lhook\joinrel\xrightarrow{\quad}
      \cE'
      \xrightarrowdbl{\quad}
      \cE'/\iota_{\cK}(\cK)
      \to 0
    \end{equation}
    to the original non-split self-extension of $\cK$. Now, the condition of producing such non-split extensions is open in $\Hom(\cK,\cE'/\iota_{\cK}(\cK))$, hence also, in this case, open {\it dense}. Because furthermore
    \begin{equation*}
      \begin{aligned}
        \Ext^1(\cK,\cK)
        \quad
        \cong
        \quad
        &\End(\cK)^*
          \quad\left(\text{Serre duality \cite[Theorem 3.12]{Huy-FM}}\right)\\
        &\text{is 1-dimensional \cite[Corollary 10.25]{muk-invmod}},
      \end{aligned}         
    \end{equation*}
    {\it all} non-split self-extensions of $\cK$ produce bundles isomorphic to the original $\cE$. Varying \Cref{eq:k2e'modk} over said open dense locus, then, will pull back \Cref{eq:ike'e'} to embeddings $\cE\lhook\joinrel\to \cE'$ with quotients $\left(\cE'/\iota_{\cK}(\cK)\right)/\iota'(\cK)$. By step \Cref{item:th:asym:stab} again we can further assume $\iota'$ rigidifying, and we are done (with the induction step  $h=1 \rightsquigarrow h=2$). 

    The analogue of \Cref{eq:kek} for general $h$ is
    \begin{equation*}
      0\to
      \tensor*[_{h-1}]{\cK}{}
      \lhook\joinrel\xrightarrow{\quad}
      \cE
      \xrightarrowdbl{\quad}
      \cK
      \to 0
    \end{equation*}
    for stable $\cK$ (in the notation of \Cref{re:selfext}), and we can proceed as in upgrading $h=1$ to $h=2$: we still have the crucial $\dim\Ext^1\left(\cK,\tensor*[_{h-1}]{\cK}{}\right)=1$ regardless of $h\ge 2$, by \Cref{eq:selfexts}. 
    
  \item\label{item:th:asym:gen} {\bf The general case.} This is much as the inductive step $h-1 \rightsquigarrow h$ of the preceding step, the induction this time being over the number $s$ of indecomposable summands in
    \begin{equation*}
      \cE\cong \bigoplus_{i=1}^s \cE_i
      ,\quad
      \dim\Hom(\cE_i,\cE_j)
      =
      \dim\Ext^1(\cE_i,\cE_j)
      =0
      ,\quad
      \forall i\ne j
    \end{equation*}
    (the existence of such a decomposition, for a semistable bundle, is precisely what maximal asymmetry means). The statement holds with the indecomposable $\cE_1$ in place of $\cE$ by step \Cref{item:th:asym:indec}. To lift the result to $\cE=\cE_1\oplus \cE_2$ (for the inductive step $s=1\rightsquigarrow s=2$) substitute the {\it split} extension
    \begin{equation*}
      \cE\cong \cE_1\oplus \cE_2
    \end{equation*}
    for \Cref{eq:kek}. The argument is even simpler this time: given an embedding $\cE_1\lhook\joinrel\xrightarrow{\iota_1}\cE'$, {\it any} embedding $\cE_2\lhook\joinrel\xrightarrow{}\cE'/\iota_1(\cE_1)$ will glue to a morphism $\cE\to \cE'$ simply because $\Ext^1(\cE_2,\cE_1)$ is trivial. 
  \end{enumerate}
\end{proof}

One consequence of \Cref{th:asym} is the following improvement on \Cref{cor:stabmidstab}, slackening one stability constraint to basic semistability. 

\begin{theorem}\label{th:prescribedseq}
  Let $\cK$, $\cE$ and $\cF$ be basic semistable bundles, with ranks and determinants satisfying \Cref{eq:slopesrksdets}. 
  \begin{enumerate}[(1),wide]
  \item\label{item:th:prescribedseq:fstab} If $\cF$ is stable then the space of embeddings $\cK\lhook\joinrel\to \cE$ fitting into an exact sequence \Cref{eq:kef} is open dense in $\Hom(\cK,\cE)$.

  \item\label{item:th:prescribedseq:kstab} Dually, if $\cK$ is stable then the space of epimorphisms $\cE\xrightarrowdbl{}\cF$ that can be completed to extensions \Cref{eq:kef} is open dense in $\Hom(\cE,\cF)$. 
  \end{enumerate}
  In particular, if either $\cF$ or $\cK$ is stable then extensions \Cref{eq:kef} exist.
\end{theorem}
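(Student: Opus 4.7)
The plan is to derive \Cref{th:prescribedseq} as an essentially formal consequence of \Cref{th:asym}: the latter already produces, at a generic morphism, a basic semistable (co)kernel of the correct charge and determinant, and stability of $\cF$ (respectively $\cK$) rigidifies that (co)kernel to the prescribed isomorphism class. First I would verify the strict slope chain
\[
  \mu(\cK) < \mu(\cE) < \mu(\cF),
\]
which follows from $\mu(\cK) < \mu(\cF)$ together with the observation that $\mu(\cE) = (\deg\cK+\deg\cF)/(\rk\cK+\rk\cF)$ is a strictly positive convex combination of $\mu(\cK)$ and $\mu(\cF)$ with weights $\rk\cK/\rk\cE$ and $\rk\cF/\rk\cE$. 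This ensures the unequal-rank and slope hypotheses of \Cref{th:asym} are met for both pairs $(\cK,\cE)$ and $(\cE,\cF)$, and that the ambient hom spaces $\Hom(\cK,\cE)$ and $\Hom(\cE,\cF)$ are non-zero by \Cref{eq:degrk}.

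For part \Cref{item:th:prescribedseq:fstab}, apply \Cref{th:asym} to the basic semistable pair $(\cK,\cE)$: the space $\Hom_{max\mid ss\mid asym}(\cK,\cE)$ of embeddings with basic semistable cokernel $\cQ$ is open dense in $\Hom(\cK,\cE)$. Any such $\cQ$ has charge $\zeta(\cE) - \zeta(\cK) = \zeta(\cF)$ and determinant $\det\cE\otimes(\det\cK)^{-1}\cong \det\cF$. Now stability of $\cF$ forces $\gcd(\rk\cF,\deg\cF)=1$ (on an elliptic curve, stable bundles have coprime rank and degree), and any semistable bundle of coprime charge is automatically stable, for a proper same-slope subbundle would violate that coprimality. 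Consequently $\cQ$ is stable with the same rank and determinant as $\cF$, whence $\cQ\cong\cF$ by uniqueness of such stable bundles \cite[Corollary to Theorem 7]{Atiyah}. This produces the open dense family of extensions claimed in \Cref{item:th:prescribedseq:fstab}.

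Part \Cref{item:th:prescribedseq:kstab} is strictly dual: apply \Cref{th:asym} to $(\cE,\cF)$ to obtain an open dense set of epimorphisms $\cE\twoheadrightarrow\cF$ with basic semistable kernel, then identify that kernel with the stable $\cK$ via the identical charge/determinant/coprimality argument. The final ``in particular'' clause is immediate from non-emptiness of the produced open dense set. I expect no substantial obstacle: the heavy lifting---producing generic morphisms whose (co)kernel is simultaneously semistable and maximally asymmetric---has already been done in \Cref{th:asym}, and what remains is the rigidity observation that a semistable bundle of coprime charge is forced to be stable and is then pinned down up to isomorphism by its rank and determinant alone.
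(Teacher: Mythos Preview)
Your proposal is correct and follows essentially the same approach as the paper's proof: both reduce \Cref{th:prescribedseq} to \Cref{th:asym} via the observation that a semistable bundle of coprime charge is automatically stable and hence pinned down by its rank and determinant. Your version is somewhat more explicit, in that you spell out the slope chain $\mu(\cK)<\mu(\cE)<\mu(\cF)$ needed to invoke \Cref{th:asym} (the paper leaves this implicit) and you cite \cite[Corollary to Theorem 7]{Atiyah} where the paper cites \cite[Proposition 14 and Lemma 30]{tu} for the uniqueness of stable bundles with prescribed rank and determinant; these are cosmetic differences only.
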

\begin{proof}
  \Cref{item:th:prescribedseq:fstab} and \Cref{item:th:prescribedseq:kstab} are mutual duals, the last claim is a consequence thereof, and \Cref{item:th:prescribedseq:fstab} (say) follows from \Cref{th:asym}: if $\gcd(k,n)=1$ there is exactly one charge-$(k,n)$ semistable bundle of given determinant (and it is stable) \cite[Proposition 14 and Lemma 30]{tu}. 
\end{proof}

\begin{remarks}\label{res:notdense}
  \begin{enumerate}[(1),wide]
  \item\label{item:res:notdense:mustbestab} As the two statements \Cref{item:th:prescribedseq:fstab} and \Cref{item:th:prescribedseq:kstab} of \Cref{th:prescribedseq} suggest, in each case it matters crucially which of the bundles $\cK$ and $\cF$ is assumed {\it stable} (as opposed to only {\it semi}stable). Consider, for instance, the following setup: $\cF=\cO(p)$ for some $p\in E$, $\cK$ is the unique non-split self-extension $\tensor*[_2]{\cO}{}$ of $\cO$ (\Cref{re:selfext}), and $\cE$ the unique (because stable) rank-3 bundle of determinant $\cF=\cO(p)$.

    \Cref{th:prescribedseq} then goes through, and furthermore \Cref{th:maxnondeg} shows that the space of embeddings $\cK\lhook\joinrel\xrightarrow{} \cE$ (which will automatically yield quotients isomorphic to $\cF$) has full dimension
    \begin{equation}\label{eq:dimk2e}
      \dim\Hom(\cK,\cE) = \braket{\cK\to\cE} = 2.
    \end{equation}
    The analogue with $\cK$ and $\cF$ interchanged is {\it not} valid: the space of (epi)morphisms $\cE\xrightarrowdbl{} \cF=\cO(p)$ is 2-dimensional, the resulting space of possible kernels is 1-dimensional (because scaling a surjection $\cE\xrightarrowdbl{} \cF$ will not alter its kernel), but $\cE$ contains a {\it unique} subsheaf isomorphic to $\cK=\tensor[_2]{\cO}{}$ (the 2-dimensional automorphism group of $\cK$ canceling out \Cref{eq:dimk2e}).
    
    In short: the space of epimorphisms $\cE\xrightarrowdbl{}\cF$ fitting into a sequence \Cref{eq:kef} {\it cannot} be dense in $\Hom(\cE,\cF)$. Per \Cref{le:chrg31quot} below though, {\it all} epimorphisms $\cE\xrightarrowdbl{}\cF$ will have semistable maximally asymmetric kernels (in accordance with \Cref{th:asym}). 
    
  \item\label{item:res:notdense:quot} To follow up and expand on \Cref{item:res:notdense:mustbestab} above, it is not difficult to describe the Quot scheme \cite[\S 5.1.4]{MR2223407} $\Quot_{\cE/E}^{\cF,\cL} = \Quot_{\cE/E}^{\Phi,\cL}$ consisting of quotients $\cE\xrightarrowdbl{}\overline{\cE}$ whose {\it Hilbert polynomial}
    \begin{equation*}
      \Phi(m):=\chi(\overline{\cE}\otimes \cL^{\otimes m})
      \text{ for some fixed (typically very ample) line bundle }\cL
    \end{equation*}
    (indeed a polynomial by Snapper's \cite[Theorem B.7]{MR2223410}) equals that of $\cF=\cO(p)$.
    
    Suppressing the `$\cL$' superscript (the precise choice of ample $\cL$ making no difference), \Cref{le:chrg31quot} identifies $\Quot_{\cE/E}^{\cO(p)}$ with the symmetric square $E^{[2]}$ of $E$. Recalling once more \cite[\S 14.2.1]{3264} that over smooth projective schemes Chern classes are definable for arbitrary coherent sheaves, consider the ``slice''
    \begin{equation}\label{eq:quotslice}
      \tensor*[_{c_1=\cO(p)}]{\Quot}{_{\cE/E}^{\cO(p)}}
      :=
      \left\{\text{classes of quotients $\cE\xrightarrowdbl{}\overline{\cE}$}\ |\ \overline{\cE}\cong c_1(\overline{\cE})\cong \cO(p)\right\}
    \end{equation}
    of $\Quot_{\cE/E}^{\cO(p)}$ (where we have identified the first Chern class with a line bundle via \cite[Proposition 1.30]{3264}). It classifies quotients of $\cE$ actually isomorphic to $\cF=\cO(p)$ (as opposed to only having the same Hilbert polynomial), and it is identifiable via \Cref{le:chrg31quot} with $\bP^1\cong E/\left(\text{sign change}\right)$: the fiber over $0\in E$ of $E^{[2]}\xrightarrow{\text{addition}}E$.

    
    
  \item\label{item:res:notdense:noo2} The choice of $\cK=\tensor[_2]{\cO}{}$ and $\cF=\cO(p)$ in \Cref{item:res:notdense:mustbestab} above ensures that an indecomposable $\cE$ fitting into an extension \Cref{eq:kef} has $\dim\Gamma(\cE)=\deg\cE=1$ \Cref{eq:tu_lemma-17}. In particular $\cO^2$ does not embed into such $\cE$, showing that the indecomposability requirements of \Cref{th:prescribedseq} are necessary (one as necessary as the other, by dualization). 
  \end{enumerate}  
\end{remarks}

A consequence of \Cref{th:prescribedseq} is the following variant of \cite[Theorem 14.10]{Polishchuk-book} (referred to in \Cref{re:polish.th.14.10} above, which addresses the case $i_0=2$ of \Cref{cor:polish.th.14.10}). It is ``unbiased'' towards the middle term of a short exact sequence, in the sense that it recovers stability for any one of three bundles fitting into an exact sequence from the stability of the other two.

In the statement and proof, in working with bundles $\cE_i$ indexed by indices $i\in \bZ/3=\{0,1,2\}$, we make the convention that $\cE_{3s+i}:=\cE_i[s]$: the $s$-fold {\it translation} \cite[\S II.3.2]{gm_halg_2e_2003} in the bounded derived category $D^b(E)$ of coherent sheaves on $E$. As customary \cite[\S III.5, Definition 3]{gm_halg_2e_2003} in that derived context, $\Hom(\square,\bullet[1]) = \Ext^1(\square,\bullet)$. 

\begin{corollary}\label{cor:polish.th.14.10}
  Let $\cE_i$, $i\in \bZ/3$ be three bundles for which \Cref{eq:slopesrksdets} holds in the present notation:
  \begin{equation*}
    \begin{cases}
      \mu(\cE_0) < \mu(\cE_1) < \mu(\cE_2)\\
      \rk \cE_1 = \rk \cE_0+\rk \cE_2\\
      \det\cE_1 \cong \det\cE_0\otimes \det\cE_2.
    \end{cases}
  \end{equation*}
  If $\cE_{i_0}$ and $\cE_{i_0+1}$ are stable, $\cE_{i_0+2}$ is basic semistable and 
  \begin{equation}\label{eq:ij}
    \dim\Hom(\cE_{i_0},\cE_{i_0+1})=1
  \end{equation}
  then
  \begin{itemize}[wide]
  \item the three fit into a non-split extension
    \begin{equation}\label{eq:e012}
      0\to
      \cE_0
      \lhook\joinrel\xrightarrow{\quad}
      \cE_1
      \xrightarrowdbl{\quad}
      \cE_2
      \to 0,
    \end{equation}
    unique up to scaling in any of the spaces $\Hom(\cE_j,\cE_{j+1})$, $j\in \bZ/3$. 
    
  \item $\cE_{i+2}$ too is stable, and the unique bundle fitting with the other two into a non-split extension \Cref{eq:e012};

  \item and \Cref{eq:ij} holds for all $j\in \bZ/3$ (in addition to the original $i_0$). 
  \end{itemize}
\end{corollary}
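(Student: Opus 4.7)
The plan is to handle all three values of $i_0$ by a single argument, treating $i_0 = 0$ (where $\cE_0, \cE_1$ are stable, $\cE_2$ is basic semistable, and $\dim\Hom(\cE_0, \cE_1) = 1$) as the representative case. Case $i_0 = 1$ follows by dualizing ($\cE_i \leftrightarrow \cE_{2-i}^*$ exchanges it with $i_0 = 0$), and case $i_0 = 2$---already essentially the content of \cite[Theorem 14.10]{Polishchuk-book} per \Cref{re:polish.th.14.10}---can in any event be handled by the same recipe with $\cE_1$ in the role of the basic semistable bundle.

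The numerical backbone is that the rank and determinant hypotheses give $\rk\cE_1 = \rk\cE_0 + \rk\cE_2$ and $\deg\cE_1 = \deg\cE_0 + \deg\cE_2$, so that by bilinearity of the form $\braket{\cdot\to\cdot}$ of \Cref{eq:brake2f} one has
\begin{equation*}
  \braket{\cE_0 \to \cE_1}
  \;=\;
  \braket{\cE_1 \to \cE_2}
  \;=\;
  \braket{\cE_0 \to \cE_2}.
\end{equation*}
The three bundles being semistable with strictly ordered slopes, the additive extension of \Cref{eq:degrk} identifies $\dim\Hom(\cE_j, \cE_{j+1})$ with the corresponding $\braket{\cdot\to\cdot}$ for $j = 0, 1$, and Serre duality on $E$ gives $\dim\Ext^1(\cE_2, \cE_0) = \dim\Hom(\cE_0, \cE_2) = \braket{\cE_0 \to \cE_2}$ as well. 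The hypothesis $\dim\Hom(\cE_0, \cE_1) = 1$ then forces all four of these quantities to equal $1$.

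For existence I would invoke \Cref{th:prescribedseq}\Cref{item:th:prescribedseq:kstab} with $\cK = \cE_0$ (stable), $\cE = \cE_1$, and $\cF = \cE_2$, which immediately yields a non-split extension \Cref{eq:e012}. Uniqueness up to scaling in any of the spaces $\Hom(\cE_j, \cE_{j+1})$ then follows from $\dim\Ext^1(\cE_2, \cE_0) = 1$: all non-split extension classes are proportional, and (via the long exact sequences) each rescaling of the extension class is equivalent to a rescaling of the inclusion $\cE_0\lhook\joinrel\xrightarrow{\quad}\cE_1$, of the projection $\cE_1\xrightarrowdbl{\quad}\cE_2$, or of the connecting map $\cE_2\to \cE_0[1]$.

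The heart of the argument---and the main obstacle---is promoting $\cE_2$ from basic semistable to stable, which I would accomplish in two steps. Decomposing $\cE_2 = \bigoplus_j \cE_{2,j}$ into indecomposable summands (all of slope $\mu(\cE_2) > \mu(\cE_0)$ by semistability), \Cref{eq:degrk} makes each $\dim\Hom(\cE_0, \cE_{2,j}) = \braket{\cE_0 \to \cE_{2,j}}$ a positive integer; summing and comparing with $\dim\Hom(\cE_0, \cE_2) = 1$ forces $\cE_2$ to be indecomposable. Writing $\cE_2$ then (per \Cref{re:selfext}) as an $h$-fold iterated self-extension of a unique stable bundle $\cE_2'$, $K$-theoretic bilinearity of $\braket{\cdot\to\cdot}$ yields $1 = \braket{\cE_0 \to \cE_2} = h \cdot \braket{\cE_0 \to \cE_2'} \geq h$ (the second factor again being a positive integer by \Cref{eq:degrk}), forcing $h = 1$ and hence $\cE_2$ stable. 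The three bulleted conclusions---existence and uniqueness of \Cref{eq:e012}, stability of $\cE_{i_0+2}$, and $\dim\Hom(\cE_j, \cE_{j+1}) = 1$ for all $j \in \bZ/3$---are then immediate, with one further invocation of Serre duality taking care of the $j = 2$ case.
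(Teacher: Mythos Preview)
Your proof is correct and takes a genuinely different route from the paper's for the key step of promoting $\cE_{i_0+2}$ to stable. The paper argues geometrically: it observes that for every $h$-torsion line bundle $\cL$ (with $h=\gcd(\rk\cE_{i_0+2},\deg\cE_{i_0+2})$) the twist $\cL\otimes\cE_{i_0+2}$ is again basic semistable of the same rank and determinant, and by \Cref{th:prescribedseq} each such twist also fits into an extension \Cref{eq:e012} with the fixed stable $\cE_{i_0}$, $\cE_{i_0+1}$; the hypothesis $\dim\Hom(\cE_{i_0},\cE_{i_0+1})=1$ then leaves room for only one third term, forcing $h=1$. You instead extract stability directly from the numerics: the identity $\braket{\cE_0\to\cE_1}=\braket{\cE_1\to\cE_2}=\braket{\cE_0\to\cE_2}=1$ (which you derive from $K$-theoretic bilinearity) forces the indecomposable-summand decomposition of $\cE_{i_0+2}$ to collapse to a single term, and then $h\cdot\braket{\cE_0\to\cE_2'}=1$ forces $h=1$. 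Your approach also delivers the third bullet point ($\dim\Hom(\cE_j,\cE_{j+1})=1$ for all $j$) as an immediate byproduct of the same bilinearity, whereas the paper obtains it by identifying the space of embeddings $\cE_0\hookrightarrow\cE_1$ with quotient $\cE_2$ (modulo scaling) with the space of surjections $\cE_1\twoheadrightarrow\cE_2$ with kernel $\cE_0$ (modulo scaling). Your argument is shorter and more elementary, and in fact shows that the ``basic'' hypothesis on $\cE_{i_0+2}$ is only needed to invoke \Cref{th:prescribedseq} for existence; the paper's version, on the other hand, is more in keeping with its running moduli-theoretic theme of counting competing bundles.
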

\begin{proof}
  This is essentially as sketched in \Cref{re:polish.th.14.10}: by \Cref{th:prescribedseq} extensions \Cref{eq:e012} exist, involving the stable $\cE_{i_0}$ and $\cE_{i_0+1}$ as well as all (mutually distinct \cite[Theorem 10]{Atiyah}) $\cL\otimes \cE_{i_0+2}$ for $\cL$ ranging over the line bundles whose $h^{th}$ power is trivial, for $h:=\gcd(\rk \cE_{i_0+2},\deg\cE_{i_0+2})$. \Cref{eq:ij} then implies that $h=1$, hence stability.

  As for \Cref{eq:ij} holding for all indices, this follows from the fact that the space of morphisms $\cE_0\lhook\joinrel\to \cE_1$ yielding quotient $\cE_2$, up to scaling, can also be identified, dually, with the space (again modulo scaling) of morphisms $\cE_1\xrightarrowdbl{} \cE_2$ with kernel $\cE_0$. 
\end{proof}

\subsection{Some Quot schemes}\label{subse:quot}

We will be interested in several examples of Quot slices $\tensor*[_{c_1=\cF}]{\Quot}{_{\cE/E}^{\cF}}$ (in the notation of \Cref{eq:quotslice}) for (typically semistable) rank-$r$ bundles $\cE$ and line bundles $\cF$. These parametrize either
\begin{itemize}
\item rank-$(r-1)$ subbundles $\cK\subset \cE$ with $\det\cK\cong \cF\otimes (\det\cE)^{-1}$;
  
\item or, equivalently, the corresponding (classes of) quotients
  \begin{equation*}
    \cE\xrightarrowdbl{\quad q_{\cK}}\ol{\cE}\cong \cE/\cK
  \end{equation*}
   with $\ol{\cE}$ of rank 1 and determinant $\cF$. This gives a decomposition
  \begin{equation*}
    \ol{\cE}\cong \cF'\oplus \left(\text{torsion}\cong \cO_D\right)
    ,\quad
    D=\text{some effective divisor}
  \end{equation*}
  and hence an embedding $\cF'\lhook\joinrel\xrightarrow{\iota_{\cK}} \cF$ determined uniquely by the requirement that, when regarded as a section of $\cF\otimes (\cF')^{-1}\cong \cO(D)$, it have zero locus $D$.
\end{itemize}

The rank-2 version of the following preliminary remark appears as \cite[Lemma 3.11]{00-leaves_xv3}; the general statement is no more difficult to prove, so we only record the statement. The symbol `$\dashv$' indicates an adjunction (of functors), with the tail pointing towards the left adjoint (as in \cite[Definition 19.3]{ahs}, say; in this specific instance left vs. right does not make much of a difference). See also \cite[Theorem 3.6.1]{hirz}. 

\begin{lemma}\label{le:deteast}
  For a rank-$r$ vector bundle $\cE$ the epimorphism
  \begin{equation*}    
    \textstyle
    \bigwedge^{r-1}\cE\otimes \cE
    \xrightarrowdbl{\quad}
    \bigwedge^r\cE=\det \cE
  \end{equation*}
  corresponds through the adjunction $(-\otimes \cE)\dashv (-\otimes \cE^*)$ to an $\Aut(\cE)$-equivariant isomorphism $\bigwedge^{r-1}\cE\cong \det \cE\otimes \cE^{*}$.  \qedhere
\end{lemma}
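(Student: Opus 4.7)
The plan is to unpack the adjunction explicitly, verify the resulting map is an isomorphism by a short local computation, and observe that $\Aut(\cE)$-equivariance is automatic from the naturality of every ingredient.

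First I would spell out the adjoint. Under the adjunction $(-\otimes\cE)\dashv(-\otimes\cE^*)$, together with the standard identification $\det\cE\otimes\cE^*\cong \cH om(\cE,\det\cE)$, the wedge-multiplication
\[
\mu\colon\;\textstyle\bigwedge^{r-1}\cE\otimes\cE\xrightarrowdbl{\quad}\det\cE,\qquad \omega\otimes v\mapsto \omega\wedge v,
\]
is transported to the contraction
\[
\widetilde{\mu}\colon\;\textstyle\bigwedge^{r-1}\cE\xrightarrow{\quad}\det\cE\otimes\cE^*,\qquad \omega\mapsto\bigl(v\mapsto \omega\wedge v\bigr).
\]
So the lemma amounts to the assertion that $\widetilde{\mu}$ is an $\Aut(\cE)$-equivariant isomorphism.

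Next I would check that $\widetilde{\mu}$ is an isomorphism. Both source and target are locally free of rank $r$, so the question is local on $E$; on any Zariski open trivializing $\cE$ with frame $e_1,\dots,e_r$, the basis $\widehat e_i:=e_1\wedge\cdots\widehat{e_i}\cdots\wedge e_r$ of $\bigwedge^{r-1}\cE$ goes to $(-1)^{i-1}(e_1\wedge\cdots\wedge e_r)\otimes e_i^*$, a frame of $\det\cE\otimes\cE^*$. Equivalently (and more invariantly), $\mu$ is the standard perfect pairing between complementary wedge powers of a free module of rank $r$ into its top wedge, so its adjoint is automatically an isomorphism.

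Finally, $\Aut(\cE)$-equivariance is automatic and requires no separate work: the wedge powers $\bigwedge^{k}$, the determinant $\det=\bigwedge^{r}$, the dual $(-)^*$, the tensor product, and the hom--tensor adjunction itself are all strictly natural in $\cE$, so $\mu$ is $\Aut(\cE)$-equivariant for the induced actions on $\bigwedge^{r-1}\cE\otimes \cE$ and on $\det\cE$, and equivariance passes through the adjunction to $\widetilde{\mu}$. The only conceptual content is therefore the globalization of the classical linear-algebra identification $\bigwedge^{r-1}V\cong \det V\otimes V^{*}$; I do not expect any genuine obstacle.
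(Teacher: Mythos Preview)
Your proposal is correct and is the natural (essentially the only) way to prove this; the paper in fact omits the proof entirely, citing the rank-$2$ case from elsewhere and remarking that the general statement is no harder, so your argument supplies precisely the routine verification the paper elides.
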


Building on this observation, \Cref{le:wedgesect} is a paraphrase and extension of \cite[Lemma 3.12]{00-leaves_xv3} (the latter, again, handling the rank-2 case). 

\begin{lemma}\label{le:wedgesect}
  Let $\cE$ be a rank-$r$ bundle on $E$ and $\cF$ a line bundle. The morphism sending
  \begin{equation*}
    \left(\cK\lhook\joinrel\xrightarrow{\iota} \cE\right)
    \in
    \tensor*[_{c_1=\cF}]{\Quot}{_{\cE/E}^{\cF}}
  \end{equation*}
  to the composition
  \begin{equation*}
    \begin{tikzpicture}[>=stealth,auto,baseline=(current  bounding  box.center)]
      \path[anchor=base] 
      (0,0) node (l) {$\cE$}
      +(2,.5) node (ul) {$\ol{\cE}$}
      +(4,.5) node (ur) {$\ol{\cE}/\text{torsion}$}
      +(6,0) node (r) {$\cF$}
      ;
      \draw[->>] (l) to[bend left=6] node[pos=.5,auto] {$\scriptstyle q_{\cK}$} (ul);
      \draw[->>] (ul) to[bend left=6] node[pos=.5,auto] {$\scriptstyle $} (ur);
      \draw[->] (ur) to[bend left=6] node[pos=.5,auto] {$\scriptstyle \iota_{\cK}$} (r);
      \draw[->] (l) to[bend right=6] node[pos=.5,auto,swap] {$\scriptstyle $} (r);
    \end{tikzpicture}
  \end{equation*}
  is precisely
  \begin{equation*}
    \begin{aligned}
      \textstyle
      \iota
      \xmapsto{\quad}
      \bigwedge^{r-1} \iota
      &\in
        \Hom\left(\bigwedge^{r-1}\cK,\ \bigwedge^{r-1}\cE\right)
        =
        \Hom\left(\det\cK,\ \bigwedge^{r-1}\cE\right)\\
      &\cong
        \Gamma\left(\bigwedge^{r-1}\cE\otimes (\det\cK)^{-1}\right)
        \cong
        \Gamma\left(\cE^*\otimes \det\cE\otimes (\det\cK)^{-1}\right)
        \quad\text{by \Cref{le:deteast}}\\
      &\cong
        \Gamma\left(\cE^*\otimes \cF\right)
        \cong
        \Hom\left(\cE, \cF\right).
    \end{aligned}    
  \end{equation*}
  \qedhere
\end{lemma}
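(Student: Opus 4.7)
The strategy is to verify that the two morphisms $\cE \to \cF$ coincide as sections of the torsion-free sheaf $\cE^* \otimes \cF$; since such sections are determined by their restriction to any dense open subset, it suffices to check equality on the complement $U := E \setminus D$ of the torsion support $D$ of $\ol{\cE}$. Over $U$ the inclusion $\iota_{\cK} : \cF' \to \cF$ is an isomorphism and the quotient $\ol{\cE}|_U$ is locally free of rank $1$, so both maps reduce to a purely local computation in a frame.

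To carry that out, after possibly shrinking $U$ I would pick a local basis $(e_1,\ldots,e_{r-1})$ of $\cK$ and extend it to a basis $(e_1,\ldots,e_r)$ of $\cE$; then $\bar{e}_r := q_{\cK}(e_r)$ trivializes $\ol{\cE}|_U$. The composition $\cE \twoheadrightarrow \ol{\cE} \twoheadrightarrow \cF' \overset{\iota_{\cK}}{\hookrightarrow} \cF$ kills $e_1,\ldots,e_{r-1}$ and sends $e_r$ to $\iota_{\cK}(\bar{e}_r)$. On the other side, $\bigwedge^{r-1}\iota$ takes the local generator $e_1\wedge\cdots\wedge e_{r-1}$ of $\det\cK$ to itself inside $\bigwedge^{r-1}\cE$; under the wedge-pairing identification $\bigwedge^{r-1}\cE \cong \det\cE \otimes \cE^*$ of \Cref{le:deteast}, this element corresponds to $(e_1\wedge\cdots\wedge e_r)\otimes e_r^*$, since wedging against $e_i$ gives $0$ for $i<r$ and $e_1\wedge\cdots\wedge e_r$ for $i=r$. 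The resulting element of $\Hom(\cE,\cF) \cong \Gamma(\cE^* \otimes \cF)$ therefore also kills $e_1,\ldots,e_{r-1}$ and sends $e_r$ to $(e_1\wedge\cdots\wedge e_r)\otimes(e_1\wedge\cdots\wedge e_{r-1})^{-1}$, regarded inside $\det\cE \otimes (\det\cK)^{-1} \cong \cF$.

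The step I expect to require the most care — though it is essentially bookkeeping — is matching up the two parametrizations of $\cF$: the canonical determinant identification $\det\cE \otimes (\det\cK)^{-1} \cong \det\ol{\cE}$ coming from $0\to\cK\to\cE\to\ol{\cE}\to 0$ (a short exact sequence of vector bundles over $U$, where $\det\ol{\cE}|_U \cong \ol{\cE}|_U \cong \cF'|_U$), followed by $\iota_{\cK}$, versus the one implicit in the chain of isomorphisms in the lemma's statement. Once one tracks through how the adjunction $(-\otimes\cE)\dashv(-\otimes\cE^*)$ in \Cref{le:deteast} is set up, under either identification the local expression $(e_1\wedge\cdots\wedge e_r)\otimes(e_1\wedge\cdots\wedge e_{r-1})^{-1}$ corresponds to $\iota_{\cK}(\bar{e}_r)$, so the two sections of $\cE^* \otimes \cF$ agree over $U$ and hence globally. (As a sanity check, the rank-$2$ case is already handled in \cite[Lemma 3.12]{00-leaves_xv3} and the argument above specializes to that one.)
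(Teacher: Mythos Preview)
The paper provides no separate proof for this lemma: the statement ends with \verb|\qedhere| immediately after the displayed chain of isomorphisms, treating that chain as self-explanatory. Your proposal is correct and supplies precisely the local verification that the paper omits --- reducing to a dense open set where the torsion vanishes, choosing an adapted frame, and tracking both maps through the wedge-pairing identification of \Cref{le:deteast}. The one point you flag as requiring care (matching the determinant identification $\det\cE\otimes(\det\cK)^{-1}\cong\det\ol{\cE}$ from the short exact sequence with the one implicit in the statement's chain) is indeed the substantive content, and your treatment of it is sound.
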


\begin{lemma}\label{le:chrg31quot}
  Let $\cE$ be a rank-3 degree-1 stable bundle on $\cE$ and $\cF$ a degree-1 line bundle.

  For any ample line bundle $\cL$ on $\cE$ we have
  \begin{equation}\label{eq:le:chrg13quot:quot}
    \Quot_{\cE/E}^{\cF,\cL}
    \cong
    \text{the symmetric square }E^{[2]}.         
  \end{equation}
\end{lemma}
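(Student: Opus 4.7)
The strategy is to construct a bijective morphism $\Psi\colon\Quot_{\cE/E}^{\cF,\cL}\to E^{[2]}$ via the identification $E^{[2]}\cong M(2,0)$ of the symmetric square with the coarse moduli of semistable rank-$2$ degree-$0$ bundles \cite[Theorem 1]{tu}, and then to invoke Zariski's Main Theorem. First I would verify that every quotient $\cE\twoheadrightarrow\overline{\cE}$ parametrised by $\Quot_{\cE/E}^{\cF,\cL}$ has $\overline{\cE}$ a line bundle of degree $1$ and kernel $\cK$ semistable of rank $2$ and degree $0$: both follow from stability of $\cE$, which forces $\mu(\cK^{\mathrm{sat}})<1/3$, hence $\deg\cK^{\mathrm{sat}}\le 0=\deg\cK$, so $\cK=\cK^{\mathrm{sat}}$; the same bound applied to rank-$1$ subsheaves of $\cK$ gives its semistability. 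The universal kernel on $\Quot_{\cE/E}^{\cF,\cL}\times E$ is a flat family of such $\cK$'s, inducing via the coarse moduli property a morphism $\Psi$ sending $[\cE\twoheadrightarrow\overline{\cE}]$ to the $S$-equivalence class $[\gr\cK]$.

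The main obstacle is bijectivity of $\Psi$ on closed points, which I would establish by case analysis on $\gr\cK$. If $\gr\cK=\cL_1\oplus\cL_2$ with $\cL_1\not\cong\cL_2$ in $\Pic^0(E)$, then $\cK\cong\cL_1\oplus\cL_2$ is forced, each $\dim\Hom(\cL_i,\cE)=\braket{\cL_i\to\cE}=1$ by \Cref{eq:degrk}, and the $2$-dimensional $\Hom(\cK,\cE)$ comprises a single $(\Bbbk^\times)^2=\Aut(\cK)$-orbit of injections (two generic maps automatically have non-coincident images since their source summands are non-isomorphic), yielding exactly one subsheaf. If instead $\gr\cK=\cL^{\oplus 2}$, then $\cK$ is either $\cL^{\oplus 2}$ or the non-split self-extension $\tensor*[_2]{\cL}{}$ of \Cref{re:selfext}: the former cannot embed, since $\dim\Hom(\cL,\cE)=1$ forces equal images of its two summands, while the latter satisfies $\dim\Hom(\tensor*[_2]{\cL}{},\cE)=2=\dim\Aut(\tensor*[_2]{\cL}{})$ by \Cref{eq:degrk,eq:selfexts}, and generic morphisms $\tensor*[_2]{\cL}{}\to\cE$ are injective (the non-injective ones factor through the unique rank-$1$ subbundle $\cL\hookrightarrow\cE$, a lower-dimensional locus), yielding again exactly one subsheaf.

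Finally, $\Quot_{\cE/E}^{\cF,\cL}$ is smooth -- the obstructions $\Ext^1(\cK,\overline{\cE})$ vanish by \Cref{eq:degrk} since $\mu(\cK)=0<1=\mu(\overline{\cE})$ -- projective, and irreducible (the first-Chern-class map to $\Pic^1(E)\cong E$ has $\bP^1$ fibres $\bP\Hom(\cE,\cF)$ by \Cref{le:wedgesect}). Since $E^{[2]}$ has the same properties, $\Psi$ is a bijective morphism between smooth projective irreducible surfaces; in characteristic $0$ such a morphism is proper and quasi-finite, hence finite, and a finite birational morphism to a normal variety is an isomorphism by Zariski's Main Theorem.
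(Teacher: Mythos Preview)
Your argument is correct and matches the paper's first proof (labelled a sketch there): both identify points of the Quot scheme with $S$-equivalence classes of rank-$2$ degree-$0$ kernels in $M(2,0)\cong E^{[2]}$, and you supply the rigor---morphism via coarse moduli, smoothness from vanishing $\Ext^1$, irreducibility, Zariski's Main Theorem---that the paper omits. Two small points: the ``exactly one subsheaf'' in the $\gr\cK=\cL^{\oplus 2}$ case follows because $\Aut(\tensor*[_2]{\cL}{})$ acts \emph{freely} on injections (an injection has trivial stabilizer), so the unique $2$-dimensional orbit fills the irreducible injection locus; and your citation of \Cref{le:wedgesect} for the $\bP^1$-fibres is misplaced (that lemma describes a different map), though the fibre claim itself is immediate from $\dim\Hom(\cE,\cO(w))=2$ and the fact that every nonzero such map is surjective. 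The paper also offers a genuinely different alternative proof, realizing $\Quot\to\Pic^1(E)\cong E$ as the projectivization of the Fourier--Mukai transform of $\bigwedge^2\cE$ (stable of odd degree) and matching it against the known projective-bundle structure of $E^{[2]}\to E$.
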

\begin{proof}[sketch]
  The (projective \cite[Theorem 5.14]{MR2223407}) scheme $Q:=\Quot_{\cE/E}^{\cF,\cL}$ parametrizes quotients $\cE\xrightarrowdbl{}\overline{\cE}$ having the same Hilbert polynomial (with respect to $\cL$) as the degree-1 line bundle $\cF$. This determines the rank of $\overline{\cE}$ (namely 1) and its degree (also 1). It follows that $\overline{\cE}\cong \cO(q)$ for some $q\in E$: it must be torsion-free, for otherwise it would have some line-bundle summand of degree $\le 0$ and $\cE$ admits no non-zero maps to any such. The isomorphism \Cref{eq:le:chrg13quot:quot} can now be described as follows.
  
  If $z\ne z'$ the pair $\{z,z'\}\in E^{[2]}$ can be identified with the (unique, up to the aforementioned equivalence) quotient
  \begin{equation*}
    \cE\xrightarrowdbl{\quad}\cO(p-z-z')
    \quad\text{with kernel}\quad
    \cO((z)-(0)) \oplus \cO((z')-(0))
  \end{equation*}
  where $\det\cE = \bigwedge^3\cE \cong \cO(p)$, while the four solutions to $2z=q$ for fixed $q$ correspond to the quotients $\cE\xrightarrowdbl{}\cO(p-q)$ whose kernels are the indecomposable self-extensions 
  \begin{equation*}
    \tensor[_2]{\cL}{_{z}}
    :=
    \cL_{z}\otimes\tensor[_2]{\cO}{}
    ,\quad
    \cL_{z}:= \cO((z)-(0))
  \end{equation*}
  (so in particular $\cL_{0}=\cO$). The $\tensor[_2]{\cL}{_{z}}$, $z\in E$ are all indeed mutually non-isomorphic by \cite[Theorem 10]{Atiyah}.
\end{proof}

\begin{proof}[alternative]
  One may also proceed somewhat differently. Having deduced, as in the preceding argument, that the relevant quotients of $\cE$ are precisely the $\cO(q)$, $q\in E$, we have a fibration
  \begin{equation}\label{eq:qovere}
    Q:=\Quot_{\cE/E}^{\cF,\cL}
    \ni
    (\cE\xrightarrowdbl{\quad}\overline{\cE})
    \xmapsto{\quad}
    \det\overline\cE\cdot (\det\cE)^{-1}
    \in
    \widehat{E}
    \cong E
  \end{equation}
  over the {\it dual} elliptic curve (\cite[\S\S 8, 13]{Mum08}, \cite[\S 9.3]{Polishchuk-book}) of degree-0 line bundles, identified back to $E$ via the ($\cL$-independent \cite[Corollary 9.2]{Polishchuk-book}) map 
  \begin{equation*}
    E
    \ni
    z
    \xmapsto[\cong]{\quad\phi_{\cL}\quad}
    t_z^*\cL\otimes \cL^{-1}
    \cong \cO((z)-(0))
    \in
    \widehat{E}
    ,\quad\cL\in \Pic(E)
    ,\quad deg\cL=1
  \end{equation*}
  ($t_z:=\text{translation by $z$}$) of \cite[\S 6, Corollary 4]{Mum08} or \cite[Corollary 8.6]{Polishchuk-book}.

  The space of epimorphisms $\cE\xrightarrowdbl{}\cO(q)$ is also
  \begin{equation*}
    \Hom(\cE,\cO(q))^{\times}
    :=
    \Hom(\cE,\cO(q))\setminus\{0\},    
  \end{equation*}
  for the stable bundle $\cE$ cannot surject onto line bundles of non-positive degree (and hence slope). Coupled with the isomorphism
  \begin{equation*}
    \textstyle
    \Hom(\cE,\cO(q))
    \cong
    \Gamma(\cO(q)\otimes \cE^*)
    \cong
    \Gamma\left(\cO((q)-(p))\otimes\bigwedge^2\cE\right)
  \end{equation*}
  with last link provided by \Cref{le:deteast} below, \Cref{eq:qovere} is the projectivization $\bP \cV\xrightarrowdbl{}E$ of a vector bundle
  \begin{equation}\label{eq:vbdl}
    \textstyle
    \cV
    ,\quad
    \text{fiber $\cV_z$}
    =
    \Gamma\left(\cO((z)-(0))\otimes\bigwedge^2\cE\right)
    ,\quad
    z\in E
  \end{equation}
  (of rank 2 by \Cref{eq:tu_lemma-17}, $\bigwedge^2\cE$ being semistable \cite[Corollary 2.6.1]{zbMATH03683759} of degree $2\deg \cE=2$). Recall now that
  \begin{itemize}[wide]
  \item the map
    \begin{equation*}
      E^{[r]}
      \xrightarrow[]{\quad\text{addition}\quad}
      E
    \end{equation*}
    is the projectivization of a stable charge-$(r,1)$ bundle on $E$ (claimed in \cite[item 3., p.451]{Atiyah}, with projectivization conventions matching ours, proven in \cite[\S 1(1)]{CaCi93}, where the bundle has degree $-1$ due to dual conventions);

  \item and $\bP\cV'\cong \bP\cV$ as schemes over $E$ {\it precisely} \cite[Exercise II.7.9]{hrt} when $\cV'\cong \cV\otimes \cL$ for some line bundle $\cL$.
  \end{itemize}
  Stable rank-$n$ bundles fall into $n$ equivalence classes under tensoring by line bundles \cite[Theorem 11]{Atiyah}, depending on the degree's residue modulo $n$. It will thus suffice, for our purposes, to show that the rank-2 bundle \Cref{eq:vbdl} is (semi)stable of odd degree (stability follows from semistability \cite[Appendix A, Fact]{tu}, given that the rank and degree are coprime).
  
  To conclude, simply note that the target bundle $\cV$ is the {\it Fourier-Mukai transform} \cite[\S 11.3]{Polishchuk-book} of the stable rank-3 degree-2 bundle $\bigwedge^2\cE$ and hence \cite[Theorem 11.6, (11.3.7) and Lemma 14.6]{Polishchuk-book} has degree $-3=-\rk \bigwedge^2\cE$ (and rank 2 of course, as noted).
\end{proof}

The phenomenon noted in \Cref{res:notdense}\Cref{item:res:notdense:noo2}, whereby $\cE$ has no subbundles isomorphic to the {\it de}composable companion $\cO^2$ of $\cK$ (`companion' in the sense that they have the same {\it associated graded} bundle \cite[preceding Proposition 5.3.7]{lepot-vb}, namely $\cO^2$), is not a universal feature: having fixed stable $\cE$ and $\cF$, it is possible for surjections $\cE\xrightarrowdbl{}\cF$ to have non-isomorphic kernels with the same associated graded bundle, some decomposable and some not.

\begin{example}\label{ex:2o2p}
  Take $\cF$ to be any line bundle of degree 2 and $\cE$ the unique rank-3 stable (so also indecomposable) bundle with $\det\cE\cong \cF$.

  On the one hand, we have an extension \Cref{eq:kef} with $\cK=\tensor[_2]{\cO}{}$ again, by \Cref{th:prescribedseq}. On the other,
  \begin{equation*}
    \dim\Gamma(\cE) = \deg\cE=2\quad\text{\Cref{eq:tu_lemma-17}}
  \end{equation*}
  so there is a canonical {\it embedding}
  \begin{equation}\label{eq:o2ine}
    \cO^2
    \cong
    \Gamma(\cE)\otimes \cO
    \lhook\joinrel\xrightarrow{}
    \cE
  \end{equation}
  whose corresponding quotient $\cE/\cO^2$ cannot have torsion:
  \begin{itemize}[wide]
  \item \Cref{eq:o2ine} is invariant under translation by the 2-torsion $E[2]$ because $\det\cE\cong \cF$ is (being a degree-2 line bundle), so any torsion in $\cE/\cO^2$ would have degree at least 4. But then the torsion-free summand of $\cE/\cO^2$ would have negative degree (and slope), contradicting the (semi)stability of $\cE$. In conclusion, $\cE/\cO^2$ is torsion-free and hence again isomorphic to $\cF$.

  \item Alternatively, because $\dim\Ext^1(\cO_z,\cO)=1$ for any skyscraper sheaf $\cO_z$, some degree-1 line-bundle summand would split off any extension of $\cO_z$ by $\cO^2$ and hence embed into $\cE$; this would again contradict $\cE$'s semistability. 
  \end{itemize}
  Summary: both $\tensor[_2]{\cO}{}$ and $\cO^2$ are realizable as kernels of epimorphisms $\cE\xrightarrowdbl{}\cF$. 
\end{example}

\begin{remark}\label{re:likeext}
  There is of course nothing particularly surprising about \Cref{ex:2o2p}, given the present context of studying the symplectic leaves $L(\cE)$: the phenomenon is very much in line with the already-familiar fact that the isomorphism class of $\cE$ in an extension 
  \begin{equation*}
    0\to \cO\xrightarrow{\quad} \cE\xrightarrow{\quad} \cL\to 0
  \end{equation*}
  (with $\cL$ a fixed line bundle of degree $\ge 3$) can vary within the same associated-graded-equivalence class. \Cref{ex:2o2p} fixes the two rightmost terms $\cE$ and $\cF$ of \Cref{eq:kef} instead (rather than the two outer terms), varying the leftmost (rather than the middle). 
\end{remark}

We remind the reader that a {\it normal} elliptic curve $E\subset \bP^{n-1}$ ($n\ge 3$) \cite[Exercise 7.36]{3264} is one of degree $n$; equivalently, one not contained in any hyperplane (so the notion of normality for curves in $\bP^{n-1}$ extends to arbitrary genus). The case $n=4$ (quartic elliptic curves in $\bP^3$) is discussed in \cite[\S III.2]{Hulek86} (also \cite[Example III.1]{Hulek83}, \cite[Exercises I.5.11 and III.3.6]{hrt} and numerous other sources), and will be of interest below. Every quartic elliptic curve is the intersection of any two of a pencil of quadrics.

Define the {\it secant slices} of a normal (here mostly elliptic) curve $E\subset \bP^{n-1}$ by
\begin{equation}\label{eq:secslice}
  \Sec_{d,z}=\Sec_{d,z}(E)
  :=
  \overline{
    \left\{
      p\in \bP^{n-1}
      \ \big|\
      p\in\text{span of some degree-$d$ divisor with sum $z\in E$}
    \right\}
  }.
\end{equation}
The aforementioned quadrics containing a normal quartic $E\subset \bP^3$ are precisely the secant slices $\Sec_{2,z}$ \cite[\S 1.5(5)]{00-leaves_xv3}: 
\begin{equation}\label{eq:quadriccoinc}
  \cO_{\bP^3}(1)|_{E}\cong \cO\left(\sum_{i=1}^4(z_i)\right)
  \xRightarrow{\quad}
  \left(
    \Sec_{2,z} = \Sec_{2,z'}
    \iff
    z+z'=\sum_{i=1}^4z
  \right),
\end{equation}
so that indeed the family $\{\Sec_{2,z}\}_{z\in E}$ is in effect parametrized by a $\bP^1$ quotient
\begin{equation*}
  E\bigg/\left(z\sim \sum_i z_i-z\right)
  \cong
  \bP^1.
\end{equation*}

Fix, for the remainder of the section, a rank-3 degree-2 stable $\cE$ and a rank-2 line bundles $\cF$. The structure of the quotient scheme $\Quot^{\cF}_{\cE/E}$ ($=\Quot^{\cF,\cL}_{\cE/E}$ for any ample line bundle $\cL$) is intimately connected to the normal-quartic geometric just recalled in brief. It might, for that reason, be instructive to describe that quot scheme (or rather its slices $\tensor*[_{c_1=\cF}]{\Quot}{_{\cE/E}^{\cF}}$ in the notation of \Cref{eq:quotslice}, on which we focus) along the lines of \Cref{le:chrg31quot} and \Cref{res:notdense}\Cref{item:res:notdense:quot}.

By contrast to the degree-1 $\cE$ case (where the relevant quotients were automatically line bundles), rank-1 degree-2 quotients $\cE\xrightarrowdbl{}\ol{\cE}$ come in two flavors:
\begin{equation}\label{eq:2quots}
  \ol{\cE}
  \cong
  \begin{cases}
    \cO((z)+(z')),\quad z,z'\in E&\text{if $\ol{\cE}$ is torsion-free}\\
    \cO(z)\oplus \cO_{z'},\quad z,z'\in E &\text{otherwise}
  \end{cases}
\end{equation}
($\cO_{z'}$ again the skyscraper at $z'$); for once more, as in the proof of \Cref{le:chrg31quot}, the stable positive-degree $\cE$ cannot surject onto bundles of non-positive degree.

The following remark ties this back to the geometry of elliptic normal curves. 

\begin{lemma}\label{le:ellquart}
  Let $\cE$ be a charge-$(3,2)$ stable bundle and $\cF$ a degree-2 line bundle.

  The subspace of $\bP\Hom(\cE,\cF)\cong \bP^{3}$ consisting of (lines through) non-epic morphisms is an elliptic normal (hence quartic) curve $E\subset \bP^3$ with $\cO_{\bP^3}(1)|_E\cong \cF^{\otimes 3}\otimes (\det\cE)^{-1}$.
\end{lemma}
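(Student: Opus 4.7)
First I compute $\dim V := \dim\Hom(\cE,\cF) = \braket{\cE\to\cF} = 2\cdot 3 - 2\cdot 1 = 4$ using \Cref{eq:degrk} (applicable since $\cE$ is stable, hence indecomposable, with $\mu(\cE) = 2/3 < 2 = \mu(\cF)$), confirming $\bP V\cong\bP^{3}$. Since $\cF$ is a line bundle, the image of a nonzero morphism $f:\cE\to\cF$ is a sub-line bundle $\cF(-D)\subset\cF$ for a unique effective divisor $D\ge 0$, and semistability of $\cE$ forces $\mu(\cF(-D)) = 2-\deg D \ge 2/3$, i.e.\ $\deg D\le 1$. Thus the non-epic locus of $\bP V$ is exactly the image of
\begin{equation*}
\phi:E\to\bP V,\qquad z\mapsto[V_z],\qquad V_z:=\im\!\left(\Hom(\cE,\cF(-z))\to V\right),
\end{equation*}
which is well-defined because $\dim V_z = \braket{\cE\to\cF(-z)} = 1$ (\Cref{eq:degrk}), and injective because $V_z=V_{z'}$ with $z\ne z'$ would produce a nonzero morphism $\cE\to\cF(-z-z')$ landing in a line bundle of slope $0<\mu(\cE)$.

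Next I globalize to identify $\cM:=\phi^*\cO_{\bP V}(-1)$. Using $\Ext^1(\cE,\cF(-z))=0$ (slope ordering) and the exact sequence $0\to\cF(-z)\to\cF\to\cO_z\to 0$, the quotient $V/V_z$ is identified with $(\cE^*\otimes\cF)_z$, and these fibers assemble into a short exact sequence of bundles
\begin{equation*}
0\to\cM\to V\otimes\cO_E\to\cE^*\otimes\cF\to 0
\end{equation*}
on $E$, with $\cM\subset V\otimes\cO_E$ the line subbundle defining $\phi$. Taking determinants with $\det(\cE^*\otimes\cF)=(\det\cE)^{-1}\otimes\cF^{\otimes 3}$ gives $\cM\cong\det\cE\otimes\cF^{-3}$, hence
\begin{equation*}
\phi^*\cO_{\bP V}(1)\cong \cF^{\otimes 3}\otimes(\det\cE)^{-1},
\end{equation*}
a line bundle of degree $6-2=4$.

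To conclude $\phi$ is a closed embedding with non-degenerate image, I dualize the short exact sequence to get $0\to\cE\otimes\cF^{-1}\to V^*\otimes\cO_E\to\cM^{-1}\to 0$. The kernel $\cE\otimes\cF^{-1}$ is semistable of rank $3$ and degree $-4$, so $H^0(\cE\otimes\cF^{-1})=0$ by \Cref{eq:tu_lemma-17}, and the long exact cohomology sequence yields an injection $V^*\hookrightarrow H^0(\cM^{-1})$. Both sides are $4$-dimensional (the right by Riemann--Roch applied to the degree-$4$ line bundle $\cM^{-1}$), so the injection is an isomorphism and $\phi$ is the complete-linear-system map $|\cM^{-1}|$. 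Very ampleness of degree-$\ge 3$ line bundles on elliptic curves then makes $\phi$ a closed embedding onto a non-degenerate curve of degree $\deg\cM^{-1}=4$---an elliptic normal quartic in $\bP^3$, with the stated hyperplane class pullback. The main obstacle is erecting the short exact sequence $0\to\cM\to V\otimes\cO_E\to\cE^*\otimes\cF\to 0$ cleanly; once it is in hand the determinant computation and the cohomology identifications are routine.
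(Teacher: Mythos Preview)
Your proof is correct and follows essentially the same route as the paper: both identify the non-epic locus as the image of $z\mapsto[\text{morphisms factoring through a degree-1 sub-line bundle}]$, and both extract the hyperplane-class pullback from the determinant of the same short exact sequence $0\to\cM\to V\otimes\cO_E\to\cE^*\otimes\cF\to 0$ (the paper calls $\cM$ by $\cL$ and writes the evaluation map as $\Gamma(\cF\otimes\cE^*)\otimes\cO\twoheadrightarrow\cF\otimes\cE^*$). Your argument is in fact somewhat more complete than the paper's: you explicitly bound the degree of the factorization using semistability, verify injectivity of $\phi$, and---by dualizing the sequence and comparing dimensions via $H^0(\cE\otimes\cF^{-1})=0$---show that $\phi$ is the complete-linear-system map for a degree-4 line bundle, hence a closed embedding onto a non-degenerate quartic. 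The paper leaves these verifications implicit. One small point worth making explicit in your write-up: the surjectivity of $V\otimes\cO_E\to\cE^*\otimes\cF$ (needed for the sequence to be exact on the right, and for $\cM$ to be a line bundle) holds because $\cE^*\otimes\cF$ is semistable of slope $4/3>1$ and hence globally generated.
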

\begin{proof}
  That $\Hom(\cE,\cF)$ is 4-dimensional follows from \Cref{eq:tu_lemma-17}. By the same token
  \begin{equation*}
    \dim\Hom(\cE,\cO(z))
    =
    \dim\Hom(\cO(z),\cF)
    =
    1
    ,\quad
    \forall z\in E
  \end{equation*}
  so there is, for every $z\in E$, a line $\cL_z\in \bP\Hom(\cE,\cF)$ consisting of those morphisms $\cE\to \cF$ factoring through the unique copy of $\cO(z)\subset \cF$. The quartic (meaning degree-4) embedding will be
  \begin{equation}\label{eq:deg4emb}
    E\ni z
    \xmapsto{\quad}
    \cL_z
    \in \bP\Hom(\cE,\cF).
  \end{equation}
  We check that the pullback $\cO_{\bP^3}(1)|_E$ through \Cref{eq:deg4emb} is indeed $\cF^{\otimes 3}\otimes (\det\cE)^{-1}$ (so in particular has degree 4). 

  Because the inverse $\cL:=\cO_{\bP^3}(-1)|_E$ is the pullback of the {\it universal subbundle} \cite[\S 3.2.3]{3264} $\cO_{\bP^3}(-1)$ on $\bP^3$, its fiber at $z\in E$ is precisely $\cL_z$ (hence the notation), and hence it fits into the exact sequence
  \begin{equation*}
    0\to
    \cL
    \xrightarrow{\quad}
    \Gamma(\cF\otimes \cE^*)\otimes \cO    
    \xrightarrowdbl{\quad\text{canonical map}\quad}
    \cF\otimes \cE^*
    \to 0
  \end{equation*}
  on $E$. The middle term is trivial of rank $\dim\Gamma(\cF\otimes \cE^*)=\dim\Hom(\cE,\cF)=4$, while the quotient has rank $\rk\cE=3$ The multiplicativity \cite[Exercise II.5.16(d)]{hrt} of the determinant for short exact sequences of bundles then yields
  \begin{equation*}
    \det\cL^{-1}
    =
    \det(\cF\otimes \cE^*)
    =
    \cF^{\otimes \deg\cE^*}\otimes \det\cE^*
    =
    \cF^{\otimes \rk\cE^*}\otimes (\det\cE)^{-1}
    =
    \cF^{\otimes 3}\otimes (\det\cE)^{-1}
  \end{equation*}
  as desired. 
\end{proof}

To make sense of the following statement, recall the two types of quotients \Cref{eq:2quots} making up the quot-scheme slice $\tensor*[_{c_1=\cF}]{\Quot}{^{\cF}_{\cE/E}}$.

\begin{proposition}\label{pr:chrg32quot}
  Let $\cE$ be a charge-$(3,2)$ stable bundle and $\cF$ a degree-2 line bundle. The map
  \begin{equation}\label{eq:quot2p3}
    \tensor*[_{c_1=\cF}]{\Quot}{^{\cF}_{\cE/E}}
    \ni
    \big(\text{class of }\cE\xrightarrowdbl{\quad}\overline{\cE}\big)
    \xmapsto{\quad}
    \big(\text{line through }\cE\xrightarrowdbl{\quad}\overline{\cE}/\text{torsion}\subseteq \cF\big)
    \in
    \bP\Hom(\cE,\cF)
  \end{equation}
  is isomorphic, as scheme over $\bP^3\cong \bP\Hom(\cE,\cF)$, to the {\it incidence correspondence} attached to the pencil of quadrics containing the elliptic normal curve $E\subset \bP^3$ of \Cref{le:ellquart}. 
\end{proposition}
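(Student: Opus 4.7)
The plan is to match \Cref{eq:quot2p3} with the incidence correspondence by identifying their fibers over $\bP^3$ and exploiting a Segre description of the ``iso-class fibers'' of $\Quot$.

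First, using \Cref{le:wedgesect}, I would unpack \Cref{eq:quot2p3} explicitly: a rank-$2$ subsheaf $\iota\colon \cK\hookrightarrow \cE$ satisfies $\det\cK\cong \det\cE\otimes \cF^{-1}$ (forced by the slice condition $\det\overline{\cE}\cong \cF$), and is sent to $[\bigwedge^2\iota]\in \bP\Hom(\det\cK,\bigwedge^2\cE)\cong \bP\Hom(\cE,\cF)=\bP^3$. By \Cref{eq:2quots}, the determinant slice forces torsion-free quotients to be $\overline{\cE}\cong \cF$, while torsion-bearing quotients take the form $\overline{\cE}\cong \cO(z)\oplus \cO_w$ with $w=\sigma(\cF)-z$ (so that $\det\overline{\cE}\cong\cO((z)+(w))\cong \cF$). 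Consequently, the fiber of \Cref{eq:quot2p3} over an epimorphism $[f]\in \bP^3\setminus E$ is the single point $\ker f$, while the fiber over $\cL_z\in E$ consists of kernels $\cK=\ker(q_2|_{\cK_1})$, where $\cK_1:=\ker(\cE\twoheadrightarrow \cO(z))$ is the rank-$2$ degree-$1$ kernel of the (unique up to scalar) surjection onto $\cO(z)$ and $q_2|_{\cK_1}\in \Hom(\cK_1,\cO_w)\cong (\cK_1)_w^*$ ranges up to scaling over $\bP((\cK_1)_w^*)\cong \bP^1$.

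Next, I would couple \Cref{eq:quot2p3} with the moduli morphism $\Quot\to M\cong \bP^1$, $\cK\mapsto [\gr\cK]$, where $M$ denotes the moduli of semistable charge-$(2,0)$ bundles of determinant $\det\cE\otimes \cF^{-1}$ (every kernel in $\Quot$ is automatically semistable, since $\cE$ stable of slope $2/3$ forces $\mu_{max}(\cK)\le 0=\mu(\cK)$). For a generic iso-class $[\cK]=[\cL_1\oplus \cL_2]$ with $\cL_1\not\cong \cL_2$ and $\cL_1\otimes \cL_2\cong \det\cK$, embeddings $\iota\colon \cK\hookrightarrow \cE$ modulo $\Aut(\cK)\cong (\Bbbk^\times)^2$ are parametrized by $\bP\Hom(\cL_1,\cE)\times \bP\Hom(\cL_2,\cE)\cong \bP^1\times \bP^1$ (each factor of dimension $\braket{\cL_i\to \cE}-1=1$), and on this $\bP^1\times \bP^1$ the map $\iota=\iota_1\oplus\iota_2\mapsto \iota_1\wedge \iota_2$ is the classical Segre embedding, with image a smooth quadric $Q_{[\cK]}\subset \bP^3$; the four non-generic iso-classes $[\cK]=[\tensor*[_2]{\cL}{}]$ with $\cL^{\otimes 2}\cong \det\cK$ similarly produce the four singular quadrics in the pencil.

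Each $Q_{[\cK]}$ contains $E$: over any $\cL_z\in E$, the iso-class map $\bP((\cK_1)_w^*)\to M$, $[q_2|_{\cK_1}]\mapsto [\ker(q_2|_{\cK_1})]$, is non-constant (distinct directions at $w$ yield non-isomorphic elementary modifications, by a local calculation) and hence surjective, so every $[\cK]\in M$ appears in the fiber over $\cL_z$, meaning $\cL_z\in Q_{[\cK]}$ for all $z\in E$. The combined morphism $\Quot\to \bP^3\times \bP^1$, $\cK\mapsto ([\bigwedge^2\iota], [\gr\cK])$, therefore lands in the incidence correspondence $I$ once $M\cong \bP^1$ is identified with the pencil via $[\cK]\mapsto Q_{[\cK]}$ (injective because distinct iso-classes have disjoint preimages over $\bP^3\setminus E$, hence an isomorphism between $\bP^1$'s). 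Since $\Quot\to I$ is a bijective morphism of projective $\bP^3$-schemes with matching fibers (a point over $\bP^3\setminus E$, a $\bP^1$ over $E$) and $I\cong \mathrm{Bl}_E\bP^3$ is normal, the map is an isomorphism by Zariski's Main Theorem. The main obstacle I anticipate is the Segre/Plücker identification of the iso-class fibers as quadrics, and the ensuing bijection $[\cK]\leftrightarrow Q_{[\cK]}$ between $M$ and the pencil.
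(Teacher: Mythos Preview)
Your approach is essentially the paper's: stratify the Quot slice via the moduli map $\cK\mapsto[\gr\cK]\in M\cong\bP^1$, and identify the generic fiber with a Segre-embedded $\bP^1\times\bP^1\hookrightarrow\bP^3$ via $(\iota_1,\iota_2)\mapsto[\iota_1\wedge\iota_2]$. The paper differs in how it pins down \emph{which} quadrics arise: rather than your fiber-surjectivity argument (``the iso-class map $\bP((\cK_1)_w^*)\to M$ is non-constant, hence onto, so $\cL_z\in Q_{[\cK]}$ for all $z$''), it computes the two torsion-bearing quotients arising from a fixed embedding $\cL_1\hookrightarrow\cE$ and reads off directly that $Q_{[\cK]}$ is the secant slice $\Sec_{2,\sigma(D)-z}$ for the appropriate $z$. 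This buys an explicit identification of the bijection $M\leftrightarrow\{\text{pencil}\}$, whereas your route only shows it is \emph{some} isomorphism of $\bP^1$'s.

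Two points deserve more care. First, the four $2$-torsion iso-classes do not ``similarly'' yield quadrics by the Segre picture: the $M$-fiber there comprises both the unique subsheaf $\cL^{\oplus 2}\subset\cE$ (a single point, which becomes the cone vertex) and the embeddings of $\tensor*[_2]{\cL}{}$ modulo its $2$-dimensional automorphism group; the paper handles this separately as a pointed cone over $\bP^1\cong\bP\Hom(\cL,\cE)$. Second, your Zariski's Main Theorem step needs the Quot slice to be reduced (a bijective morphism from a non-reduced scheme to a normal variety need not be an isomorphism); you should either check smoothness of $\Quot$ here (via vanishing of $\Ext^1(\cK,\cE/\cK)$, which does hold since $\mu_{\max}(\cK)\le 0<\mu_{\min}(\cE/\cK)$) or note that the map to $\bP^3\times M$ is a closed immersion on each $M$-stratum.
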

\begin{proof}
  The claim, formally, is that there is an isomorphism
  \begin{equation*}
    \tensor*[_{c_1=\cF}]{\Quot}{^{\cF}_{\cE/E}}
    =:
    Q
    \cong
    \left\{(t,p)\in \bP^1\times \bP^3\ |\ p\in \text{quadric with parameter }t\right\},
  \end{equation*}
  for the parameter space $\bP^1$ of the family of quadrics, intertwining the map \Cref{eq:quot2p3} and the projection $\bP^1\times \bP^3\to \bP^3$. We will assume $\cF=\det\cE$ to simplify matters (the only additional complications arising in the general case are notational). 
  
  $Q$ parametrizes the trivial-determinant rank-2 subbundles $\cK\subset \cE$, so in particular maps to the moduli space $M(2,0)\cong E^{[2]}$ \cite[Theorem 1]{tu} parametrizing semistable rank-2 degree-0 bundles up to equivalence (in the sense \cite[p.9, preceding \S 3]{tu} of having the same associated graded bundle). Writing $\tensor[^z]{\cL}{}:=\cO((z)-(0))$ (per \Cref{not:miscinit}), that map is
  \begin{equation*}
    Q\ni \cK
    \xmapsto{\quad\psi\quad}
    \{\pm z\}\in E^{[2]}\quad (\text{multiplicity allowed if $2z=0$})
  \end{equation*}
  whenever
  \begin{equation*}
    \cK\cong
    \begin{cases}
      \tensor[^z]{\cL}{}\oplus \tensor[^z]{\cL}{}^{-1}\\
      \text{or the unique self-extension of }\tensor[^{z}]{\cL}{},\quad z\in\text{2-torsion }E[2].
    \end{cases}
  \end{equation*}
  
  The quadrics containing $E\subset \bP^3$ are precisely the level sets of $\psi$.
  
  \begin{enumerate}[(I),wide]
  \item\label{item:pr:chrg32quot:2zne0} {\bf $2z\ne 0$.} Setting $\cK:=\tensor[^z]{\cL}{}\oplus \tensor[^z]{\cL}{}^{-1}$, the map
    \begin{equation*}
      \textstyle
      \Hom(\cK,\cE)
      \ni
      s
      \xmapsto{\quad\text{\Cref{le:wedgesect}}\quad}
      \bigwedge^2 s
      \in
      \Gamma\left(\bigwedge^2 \cE\right)
      \xrightarrow[\cong]{\quad\text{\Cref{le:deteast}}\quad}
      \Gamma(\cF\otimes \cE^*)
      \cong
      \Hom(\cE,\cF)
    \end{equation*}
    induces the {\it Segre embedding} \cite[Exercises I.2.14 and I.2.15]{hrt} of the quadric surface
    \begin{equation*}
      \begin{aligned}
        \psi^{-1}(\{\pm z\})
        &\cong
          \left(\text{embeddings }\cK\lhook\joinrel\xrightarrow{}\cE\right)/\Aut(\cK)\\
        &\cong
          \left(\prod_{w\in \{\pm z\}}\Hom(\tensor[^w]{\cL}{},\cE)^{\times}\right)/\Aut(\cK)\\
        &\cong (\Bbbk^2)^{\times}\times (\Bbbk^2)^{\times}/\Bbbk^{\times}\times \Bbbk^{\times}
          \quad\text{by \Cref{eq:degrk}}\\
        &\cong
          (\bP^1)^2
      \end{aligned}    
    \end{equation*}
    into $\bP^3\cong \bP\Hom(\cE,\cF)$, with the second isomorphism a consequence of the fact that every morphism $\cK\to \cE$ that annihilates neither summand $\tensor[^w]{\cL}{}$, $w=\pm z$ is automatically an embedding: otherwise that morphism would have to factor through a line subbundle of $\cE$ of degree $\le 0$, and no such bundle admits non-zero morphisms from {\it both} $\tensor[^w]{\cL}{}$.

    I claim, specifically, that $\psi^{-1}(\{\pm z\})$ is the secant slice $\Sec_{2,\sigma(D)-z}$ attached to the embedding $E\subset \bP^3$ of \Cref{le:ellquart}, for $\cF\cong \cO(D)$ ($D$ an effective degree-2 divisor) and $\sigma$ as in \Cref{eq:aj}. We analyze embeddings
    \begin{equation*}
      \cK\cong \tensor[^z]{\cL}{}\oplus \tensor[^z]{\cL}{^{-1}}
      \lhook\joinrel\xrightarrow{\quad}
      \cE
    \end{equation*}
    by means of the resulting maps 
    \begin{equation}\label{eq:lw2qbylw}
      \tensor[^z]{\cL}{}
      \lhook\joinrel\xrightarrow{\quad}
      \cE/\tensor[^z]{\cL}{^{-1}}
    \end{equation}
    (note in passing that non-zero morphisms \Cref{eq:lw2qbylw} are in any case embeddings, because $\cE$ has neither torsion nor positive-degree subbundles). The task is to show, having fixed a non-zero morphism (hence embedding) $\tensor[^z]{\cL}{^{-1}}\lhook\joinrel\xrightarrow{}\cE$, that the divisor of
    \begin{equation*}
      \bP^1\cong \bP\Hom\left(\tensor[^z]{\cL}{},\ \cE/\tensor[^z]{\cL}{^{-1}}\right)
    \end{equation*}
    consisting of (lines through) morphisms \Cref{eq:lw2qbylw} whose cokernel has torsion is of degree 2, and that torsion is $\cO_w$ and $\cO_{w'}$ respectively with $w+w'=\sigma(D)-z$. To see this, simply note that for generic choices of the embedding $\tensor[^z]{\cL}{^{-1}}\subset \cE$ we have
    \begin{equation*}
      \cE/\tensor[^z]{\cL}{^{-1}}
      \cong
      \tensor[^{1;p}]{\cL}{}\oplus \tensor[^{1;p'}]{\cL}{}
      \quad\text{in the notation of \Cref{eq:ldd}}
    \end{equation*}
    with $p+p'=\sigma(\det\cE)+z$. 
    
  \item\label{item:pr:chrg32quot:2z0} {\bf $2z=0$.} Similarly, in the four exceptional cases $2z=0$ the quadrics in question are singular with vertices corresponding to the four subsheaves $\tensor[^{\omega}]{\cK}{}:=(\tensor[^{\omega}]{\cL}{})^2\subset \cE$ for $\omega$ ranging over the 2-torsion $E[2]$. There is precisely one such subsheaf for each $\omega$, obtained by tensoring the embedding
    \begin{equation*}
      \cO^2
      \xrightarrow[\cong]{\quad\text{\Cref{eq:tu_lemma-17}}\quad}
      \Gamma(\cE\otimes \tensor[^{\omega}]{\cL}{^{-1}})\otimes \cO
      \lhook\joinrel\xrightarrow{\quad}
      \cE\otimes \tensor[^{\omega}]{\cL}{^{-1}}
    \end{equation*}
    with $\tensor[^{\omega}]{\cL}{}$.
    
    Consider the case $w=0\in E$ (the origin), when $\tensor[^{\omega}]{\cK}{}$ is the non-split self-extension $\tensor*[_2]{\cO}{}$ of $\cO$ (\Cref{res:notdense}\Cref{item:res:notdense:mustbestab}). The map
    \begin{equation*}
      \left(\text{embeddings $\tensor*[_2]{\cO}{}\lhook\joinrel\to \cE$}\right)/\Aut(\tensor*[_2]{\cO}{})
      \xrightarrow{\quad\text{restriction}\quad}
      \bP\Gamma(\cE)
      \cong \bP^1
    \end{equation*}
    is the projection of the pointed cone on $\bP^1$ to that base. We can proceed much as in \Cref{item:pr:chrg32quot:2zne0} above. For a generic morphism $\cO\lhook\joinrel\xrightarrow{\iota} \cE$ the quotient $\cE/\iota(\cO)$ will be $\tensor[^{1;p}]{\cL}{}\oplus \tensor[^{1;p'}]{\cL}{}$ with $p+p'=\sigma(D)$: after dualizing and tensoring with a degree-one line bundle we are back in the setting of \Cref{le:chrg31quot}. The two morphisms $\cO\to \cE/\iota(\cO)$ resulting from embeddings $\tensor*[_2]{\cO}{}\lhook\joinrel\to \cE$ which
    \begin{itemize}[wide]
    \item restrict (up to scaling) to the given $\iota$;

    \item and produce a quotient $\cE/\tensor*[_2]{\cO}{}$ with torsion
    \end{itemize}
    will, respectively, yield
    \begin{equation*}
      \text{torsion}(\cE/\tensor*[_2]{\cO}{})
      \cong
      \cO_p\text{ or }\cO_{p'}
      ,\quad
      p+p'=\sigma(D). 
    \end{equation*}
    Or: the line on the cone on $\bP^1$ which passes through (the point corresponding to) $\iota$ intersects the elliptic curve $E$ of \Cref{le:ellquart} at $p$ and $p'$, with $p+p'=\sigma(D)$. The cone, then, is precisely the singular quadric $\Sec_{2,\sigma(D)}$.
  \end{enumerate}
\end{proof}

\begin{remark}\label{re:ecopiesinquadrics}
  The embedding $E\subset \bP^3\cong \bP\Hom(\cE,\cF)$ in \Cref{le:ellquart} corresponds, in the simplified setting $\cF\cong \det\cE$ of the preceding proof, to the degree-4 line bundle $\cF^{\otimes 2}$. By \Cref{eq:quadriccoinc}, the resulting secant slices coincide in pairs: $\Sec_{2,w} = \Sec_{2,2\sigma(D)-w}$ for $\cF\cong \cO(D)$. This accounts for the shift by $\sigma(D)$ in the identification
  \begin{equation*}
    \psi^{-1}(\{\pm z\})
    \cong
    \Sec_{2,\sigma(D)-z}
  \end{equation*}
  in the proof: the left-hand side is invariant under $z\leftrightarrow -z$, which transformation interchanges the two parameters $\sigma(D)\pm z$ with sum $2\sigma(D)$ (whose corresponding secant slices thus coincide).
\end{remark}

\addcontentsline{toc}{section}{References}

\def\cprime{$'$}

\Addresses

\end{document}